\definecolor{darkblue}{rgb}{0.0,0,0.7} 
\definecolor{darkred}{rgb}{0.7,0,0} 
\definecolor{darkgreen}{rgb}{0, .6, 0} 
\definecolor{dtsfsf}{rgb}{0.8274509803921568,0.1843137254901961,0.1843137254901961}
\definecolor{wrwrwr}{rgb}{0.3803921568627451,0.3803921568627451,0.3803921568627451}
\definecolor{cqcqcq}{rgb}{0.7529411764705882,0.7529411764705882,0.7529411764705882}
\newcommand{\defn}[1]{{\color{darkred}\emph{#1}}} 
\newcommand{\wt}{\operatorname{wt}}
\newcommand{\len}{\mathsf{len}}
\newcommand{\HH}{\mathcal{H}}
\newcommand{\ext}{\mathsf{ex}}
\newcommand{\res}{\mathsf{res}}
\newcommand{\uncrowd}{\mathsf{uncrowd}}
\newcommand{\row}{\mathsf{row}}
\newcommand{\SVT}{\mathsf{SVT}}
\newcommand{\SSYT}{\mathsf{SSYT}}
\newcommand{\sort}{\mathsf{sort}}
\newtheorem{theorem}{Theorem}[section]
\newtheorem{proposition}[theorem]{Proposition}
\newtheorem{lemma}[theorem]{Lemma}
\newtheorem{corollary}[theorem]{Corollary}
\theoremstyle{definition}
\newtheorem{definition}[theorem]{Definition}
\newtheorem{remark}[theorem]{Remark}
\newtheorem{example}[theorem]{Example}
\numberwithin{equation}{section}
\title{A crystal on decreasing factorizations in the $0$-Hecke monoid}
\author[J.~Morse]{Jennifer Morse}
\address[J. Morse]{Department of Mathematics, University of Virginia, Charlottesville, VA 22904, U.S.A}
\email{morsej@virginia.edu}
\author[J.~Pan]{Jianping Pan}
\address[J. Pan]{Department of Mathematics, UC Davis, One Shields Ave., Davis, CA 95616-8633, U.S.A.}
\email{jpgpan@ucdavis.edu}
\author[W.~Poh]{Wencin Poh}
\address[W. Poh]{Department of Mathematics, UC Davis, One Shields Ave., Davis, CA 95616-8633, U.S.A.}
\email{wpoh@ucdavis.edu}
\author[A.~Schilling]{Anne Schilling}
\address[A. Schilling]{Department of Mathematics, UC Davis, One Shields Ave., Davis, CA 95616-8633, U.S.A.}
\email{anne@math.ucdavis.edu}
\date{\today}
\keywords{crystal bases, 0-Hecke monoid, stable Grothendieck polynomials}
\subjclass[2010]{Primary 05E05, 05E10; Secondary 14N15, 20G42}
\date{\today}
\begin{document}

\begin{abstract}
We introduce a type $A$ crystal structure on decreasing factorizations of fully-commu\-tative elements in the 0-Hecke 
monoid which we call $\star$-crystal. This crystal is a $K$-theoretic generalization of the crystal on decreasing factorizations 
in the symmetric group of the first and last author. We prove that under the residue map the $\star$-crystal intertwines 
with the crystal on set-valued tableaux recently introduced by Monical, Pechenik and Scrimshaw. We also define 
a new insertion from decreasing factorization to pairs of semistandard 
Young tableaux and prove several properties, such as its relation to the Hecke insertion
and the uncrowding algorithm. The new insertion also intertwines with the crystal operators.
\end{abstract}

\maketitle 

\section{Introduction}

Grothendieck polynomials were introduced by Lascoux and Sch\"utzenberger~\cite{LascouxSchutzenberger.1982,
LascouxSchutzenberger.1983} as representatives for the Schubert classes in the $K$-theory of the flag 
manifold. Their stabilizations were studied by Fomin and Kirillov~\cite{FominKirillov.1994}.
The {stable Grothendieck polynomials}, labeled by permutations $w\in \mathbb{S}_n$, are defined as
\begin{equation}
\label{eq.G}
	\mathfrak{G}_w(x_1,\dots,x_m;\beta) = \sum_{(\mathbf{k},\mathbf{h})} \beta^{\ell(\mathbf{h})-\ell(w)} x^{\mathbf{k}},
\end{equation}
where the sum is over decreasing factorizations $[\mathbf{k},\mathbf{h}]^t$ of $w$ in the 0-Hecke algebra.
When $\beta=0$, $\mathfrak{G}_w$ specializes to the Stanley symmetric function $F_w$ \cite{Stanley.1984}.

A robust combinatorial picture has been developed for the special case of Grothendieck 
polynomials indexed by Grassmannian permutations.  
Buch~\cite{Buch.2002} showed that the Grassmannian Grothendieck polynomials can be realized as
the generating functions of semistandard set-valued tableaux:
\begin{equation}
\label{equation.straight G}
	\mathfrak{G}_\lambda(x_1,\ldots,x_m; \beta) = \sum_{T \in \SVT^m(\lambda)} \beta^{\ext(T)} x^{\wt(T)},
\end{equation}
where $\SVT^m(\lambda)$ is the set of semistandard set-valued tableaux of shape $\lambda$ in the 
alphabet $[m]:=\{1,2,\ldots,m\}$ and $\ext(T)$ is the excess of $T$. Recently,
Monical, Pechenik and Scrimshaw~\cite{MPS.2018} provided a type $A_{m-1}$-crystal structure on $\SVT^m(\lambda)$
which, in particular, implies that
\[
	\mathfrak{G}_\lambda(x_1,\ldots,x_m;\beta) = \sum_\mu \beta^{|\mu|-|\lambda|}
	M_\lambda^\mu \; s_\mu(x_1,\ldots,x_m),
\]
where $M_\lambda^\mu$ is the number of highest-weight set-valued tableaux of weight $\mu$
in the crystal $\SVT^m(\lambda)$. 
Their approach recovers a Schur expansion formula for Grassmannian Grothendieck polynomials
given by Lenart~\cite[Theorem~2.2]{Lenart.2000} in terms of flagged increasing tableaux.

In this paper, we define a type $A$ crystal structure on decreasing factorizations of $w$ in the 0-Hecke algebra
of~\eqref{eq.G}, when $w$ 
is fully-commutative~\cite{Stembridge.1996} (or equivalently $321$-avoiding). A permutation $w$ is fully-commutative if its 
reduced expressions do not contain any braids. The number of fully-commutative elements of $\mathbb{S}_n$ is the 
$n$-th Catalan number. The residue map (see Section~\ref{section.residue map}) shows that 
fully-commutative permutations correspond to skew shapes. We call our crystal $\star$-crystal. It is local in the sense that 
the crystal operators
$f^{\star}_i$ and $e^{\star}_i$ only act on the $i$-th and $(i+1)$-th factors of the decreasing factorization. It generalizes
the crystal of Morse and Schilling~\cite{MorseSchilling.2016} for Stanley symmetric functions (or equivalently
reduced decreasing factorizations of $w$) in the fully-commutative case. We show that the $\star$-crystal and the crystal 
on set-valued tableaux intertwine under the residue map (see Theorem~\ref{thm: star}). We also show that the residue 
map and the Hecke insertion~\cite{BKSRY.2008} are related (see Theorem~\ref{thm: res}), thereby 
resolving~\cite[Open Problem 5.8]{MPS.2018} in the fully-commutative case.
In addition, we provide a new insertion algorithm, which we call $\star$-insertion, from decreasing factorizations on 
fully-commutative elements in the 0-Hecke monoid to pairs of (transposes of) semistandard Young tableaux of the same 
shape (see Definition~\ref{def: new_insertion} and Theorem~\ref{theorem.star insertion bijection}), which intertwines
with crystal operators (see Theorem~\ref{thm: star-crystal.star-insertion}). This recovers the Schur expansion of 
$\mathfrak{G}_w$ of Fomin and Greene~\cite{FominGreene.1998} when $w$ is fully-commutative, stating that
\[
 	\mathfrak{G}_w = \sum_\mu \beta^{|\mu|-\ell(w)} g_w^{\mu} s_\mu,
\]
where 
\[
	g_w^{\mu} = |\{ T \in \mathsf{SSYT}^n(\mu') \mid w_C(T) \equiv w\} |,
\]
and $w_C(T)$ is the column reading word of $T$ (see Remark~\ref{remark.schur expansion}). We also show that the 
composition of the residue map with the $\star$-insertion is related to the uncrowding algorithm~\cite{Buch.2002} (see 
Theorem~\ref{theorem.uncrowding}). Other insertion algorithms have recently been studied
in~\cite{ChanPflueger.2019}.

The paper is organized as follows. In Section~\ref{section.star crystal}, we introduce the $\star$-crystal
on decreasing factorizations in the 0-Hecke monoid and show that it intertwines with the crystal on semistandard 
set-valued tableaux~\cite{MPS.2018} under the residue map.
In Section~\ref{section.insertion}, we discuss two insertion algorithms for decreasing factorizations. 
The first is the Hecke insertion introduced by Buch et al.~\cite{BKSRY.2008} and the second is the new
$\star$-insertion. In Section~\ref{section.properties}, properties of the $\star$-insertion are discussed.
In particular, we prove that it intertwines with the crystal operators and that it relates to the uncrowding algorithm.
We conclude in Section~\ref{section.outlook} with some discussions about the non-fully-commutative case.

\subsection*{Acknowledgments}
We are grateful to Travis Scrimshaw and Zach Hamaker for discussions.
Jennifer Morse and Anne Schilling would like to thank MRSI for hospitality during their stay in July 2019,
where part of this research was carried out.

This material is based upon work supported by the National Security Agency under Grant No. H98230-19-1-0119, 
The Lyda Hill Foundation, The McGovern Foundation, and Microsoft Research, while JM and AS were in residence at
the Mathematical Sciences Research Institute in Berkeley, California, during the summer of 2019.
The first author was partially supported by NSF grant DMS--1833333.
The last author was partially supported by NSF grants DMS--1760329 and DMS--1764153. 

\section{The $\star$-crystal}
\label{section.star crystal}

In this section, we define the $K$-theoretic generalization of the crystal on decreasing factorizations by
Morse and Schilling~\cite{MorseSchilling.2016} when the associated word is fully-commutative. The underlying
combinatorial objects are decreasing factorizations in the 0-Hecke monoid introduced in 
Section~\ref{section.decreasing factorizations}. The $\star$-crystal on these decreasing factorizations
is defined in Section~\ref{section.crystal}. We review the crystal structure on set-valued tableaux introduced by
Monical, Pechenik and Scrimshaw~\cite{MPS.2018} in Section~\ref{section.SVT}. The residue
map and the proof that it intertwines the $\star$-crystal and the crystal on set-valued tableaux is given
in Section~\ref{section.residue map}.

\subsection{Decreasing factorizations in the 0-Hecke monoid}
\label{section.decreasing factorizations}

The \defn{symmetric group} $\mathbb{S}_n$ for $n\geqslant 1$ is generated by the simple transpositions
$s_1,s_2,\dots,s_{n-1}$ subject to the relations
\[
\begin{aligned}
	s_i s_j &= s_j s_i,&& \text{ if } |i-j|>1,\\
	s_i s_{i+1} s_i &= s_{i+1} s_i s_{i+1},&& \text{ for $1\leqslant i<n-1$,}\\
	s^2_i &= 1,&& \text{ for $1\leqslant i \leqslant n-1$.}
\end{aligned}
\]
A \defn{reduced expression} for an element $w\in \mathbb{S}_n$ is a word $a_1 a_2 \dots a_\ell$ with
$a_i \in [n-1]:=\{1,2,\dots,n-1\}$ such that
\begin{equation}
\label{equation.product}
	w=s_{a_1}\cdots s_{a_\ell}
\end{equation}
and $\ell$ is minimal among all words satisfying~\eqref{equation.product}. In this case, $\ell$ is called the length of $w$.

\begin{definition}
The \defn{$0$-Hecke monoid} $\mathcal{H}_0(n)$, where $n\geqslant 1$ is an integer, is the monoid of finite words generated 
by positive integers in the alphabet $[n-1]$ subject to the relations
\begin{equation}
\label{equation.relations H0}
\begin{aligned}
	pq &= qp&& \text{ if } |p-q|>1,\\
	pqp &= qpq&& \text{ for all } p,q,\\
	pp &= p&& \text{ for all } p.
\end{aligned}
\end{equation}
\end{definition}

We may form an equivalence relation $\equiv_{\mathcal{H}_0}$ on all words in the alphabet $[n-1]$ based on the 
relations~\eqref{equation.relations H0}. The equivalence classes are infinite since the last relation changes the length
of the word. We say that a word $a = a_1 a_2\ldots a_\ell$ is \defn{reduced} if $\ell\geqslant 0$ 
is the smallest among all words in $\mathcal{H}_0(n)$ equivalent to $a$. In this case,
$\ell$ is the length of $a$. Note that $\mathcal{H}_0(n)$ is in bijection with $\mathbb{S}_n$ by identifying the reduced word
$a_1 a_2\ldots a_\ell$ in $\mathcal{H}_0(n)$ with $s_{a_1} s_{a_2}\cdots s_{a_\ell}\in \mathbb{S}_n$. We say
$w\in \mathcal{H}_0(n)$ or $\mathbb{S}_n$ is \defn{fully-commutative} or \defn{$321$-avoiding} if
none of the reduced words equivalent to $w$ contain a consecutive braid subword of the form $i\; i+1\; i$ or $i\; i-1\; i$
for any $i\in [n-1]$.

\begin{remark}
Any (not necessarily reduced) word $w\in \mathcal{H}_0(0)$ containing a consecutive braid subword is not fully-commutative.
\end{remark}

\begin{definition}\label{def: decr_fact}
A \defn{decreasing factorization} of $w \in \mathcal{H}_0(n)$ into $m$ \defn{factors} is a product of the form 
\[
	\mathbf{h}=h^m \dots h^2 h^1,
\]
where the sequence in each factor
\[
	h^i = h^i_1h^i_2 \dots h^i_{\ell_i}
\]
is either empty (meaning $\ell_i=0$) or strictly decreasing (meaning $h^i_1>h^i_2>\cdots>h^i_{\ell_i}$) for each 
$1\leqslant i\leqslant m$ and $\mathbf{h}\equiv_{\mathcal{H}_0} w$ in $\mathcal{H}_0(n)$.

The set of all possible decreasing factorizations into $m$ factors is denoted by $\mathcal{H}^m$ or
$\mathcal{H}^m(n)$ if we want to indicate the value of $n$. We call $\ext(\mathbf{h})=\len(\mathbf{h})-\ell$
the \defn{excess} of $\mathbf{h}$, where $\len(\mathbf{h})$ is the number of letters in $\mathbf{h}$ and $\ell$ is the length 
of $w$. We say $\mathbf{h}$ is fully-commutative (or $321$-avoiding) if $w$ is fully-commutative.
\end{definition}

\subsection{The $\star$-crystal}
\label{section.crystal}

Let $\mathcal{H}^{m,\star}$ be the set of fully-commutative decreasing factorizations in $\mathcal{H}^m$. 
We introduce a type $A_{m-1}$ crystal structure on $\mathcal{H}^{m,\star}$, which we call the \defn{$\star$-crystal}.
This generalizes the crystal for Stanley symmetric functions~\cite{MorseSchilling.2016} (see also~\cite{Lenart.2004}).

\begin{definition}\label{def: star}
For any $\mathbf{h} = h^m\dots h^2h^1 \in \mathcal{H}^{m,\star}$, we define crystal operators $e^\star_i$ and $f^\star_i$ for $i\in [m-1]$ and a weight function 
$\wt(\mathbf{h})$. The \defn{weight} function is determined by the length of the factors 
\[
	\wt(\mathbf{h}) = (\len(h^1),\len(h^2),\dots,\len(h^m)).
\]
To define the \defn{crystal operators} $e_i^\star$ and $f_i^\star$, we first describe a pairing process:
	\begin{itemize}
		\item Start with the largest letter $b$ in $h^{i+1}$, pair it with the smallest $a\geqslant b$ in $h^{i}$. 
		If there is no such $a$, then $b$ is unpaired.
		\item The pairing proceeds in decreasing order on elements of $h^{i+1}$ and with each iteration, 
		previously paired letters of $h^{i}$ are ignored.
	\end{itemize}
If all letters in $h^i$ are paired, then $f^\star_i$ annihilates $\mathbf{h}$. Otherwise, let $x$ be 
the largest unpaired letter in $h^i$. The crystal operator $f^\star_i$ acts on $\mathbf{h}$ in either of the following ways:
	\begin{enumerate}
		\item If $x+1 \in h^i \cap h^{i+1}$, then remove $x+1$ from $h^i$, add $x$ to $h^{i+1}$.
		\item Otherwise, remove $x$ from $h^i$ and add $x$ to $h^{i+1}$.
	\end{enumerate}
If all letters in $h^{i+1}$ are paired, then $e^\star_i$ annihilates $\mathbf{h}$. Let $y$ be the smallest unpaired 
letter in $h^{i+1}$. The crystal operator $e^\star_i$ acts on $\mathbf{h}$ in either of the following ways:
	\begin{enumerate}
		\item If $y-1 \in h^i \cap h^{i+1}$, then remove $y-1$ from $h^{i+1}$, add $y$ to $h^i$.
		\item Otherwise, remove $y$ from $h^{i+1}$ and add $y$ to $h^i$.
	\end{enumerate}
It is not hard to see that $e^\star_i$ and $f^\star_i$ are partial inverses of each other.
\end{definition}

\begin{example}
Let $\mathbf{h} = (7532)(621)(6)$, then 
\begin{align*}
	f^\star_1(\mathbf{h})&=0,  & e^\star_1(\mathbf{h}) &= (7532)(62)(61),\\
	f^\star_2(\mathbf{h})&= (75321)(61)(6), & e^\star_2(\mathbf{h})&=(753)(6321)(6).
\end{align*}
\end{example}

\begin{remark}
\label{remark.local}
\mbox{}
Compared to~\cite{MorseSchilling.2016}, one pairs a letter $b$ in $h^{i+1}$ with the smallest letter $a\geqslant b$
in $h^i$ rather than $a>b$.
\end{remark}

\begin{proposition}
Let $\mathbf{h} =h^m \dots h^1\in \mathcal{H}^{m,\star}$ such that $f_i^\star(\mathbf{h})\neq 0$.
Then $f_i^\star(\mathbf{h}) \in \mathcal{H}^{m,\star}$, $f_i^\star(\mathbf{h}) \equiv_{\mathcal{H}_0} \mathbf{h}$, and
$\ext(f_i^\star(\mathbf{h})) = \ext(\mathbf{h})$. Furthermore, the $j$-th factor in $f_i^\star(\mathbf{h})$ and $\mathbf{h}$
agrees for $j\notin \{i,i+1\}$. Analogous statements hold for $e_i^\star$.
\end{proposition}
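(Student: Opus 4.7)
The statement splits into four sub-claims: (a) the $j$-th factor is unchanged for $j \notin \{i, i+1\}$; (b) $\len(f_i^\star(\mathbf{h})) = \len(\mathbf{h})$; (c) $h^i$ and $h^{i+1}$ remain strictly decreasing; and (d) $f_i^\star(\mathbf{h}) \equiv_{\mathcal{H}_0} \mathbf{h}$. Once (d) is established, the underlying $\mathcal{H}_0(n)$-element is unchanged, so fully-commutativity (a property of this element, by the definition in Section~\ref{section.decreasing factorizations}) is automatically preserved; combined with (b), this also gives the excess identity. Claims (a) and (b) are immediate from the definition, since only the $i$-th and $(i+1)$-th factors are altered and, in both subcases, exactly one letter leaves $h^i$ and one letter enters $h^{i+1}$.

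For (c), removing a letter from a strictly decreasing sequence preserves the property, so it suffices to check that the inserted letter $x$ is not already in $h^{i+1}$. This follows from the pairing rule: if $x \in h^{i+1}$, then (processing $h^{i+1}$ from largest to smallest) the letter $x$ of $h^{i+1}$ would pair with the smallest available $a \geq x$ in $h^i$; since $x \in h^i$ is the largest unpaired letter, we would have $a = x$, forcing $x \in h^i$ to become paired — contradicting the choice of $x$. Once $x \notin h^{i+1}$, there is a unique slot in $h^{i+1}$ where $x$ can be inserted to maintain strict decrease.

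The heart of the proof is (d), which I would handle by a case analysis. In case (1), decompose $h^{i+1} = A(x+1)B$ and $h^i = C(x+1)(x)D$, where letters of $A, C$ exceed $x+1$ and letters of $B, D$ are less than $x$. The operation transforms the concatenation $A(x+1)BC(x+1)(x)D$ into $A(x+1)(x)BC(x)D$, and the desired equivalence reduces to the local identity
\[
(x+1)\, W\, (x+1)(x) \equiv_{\mathcal{H}_0} (x+1)(x)\, W\, (x) \quad \text{with } W = BC,
\]
which in the generic situation ($x+2 \notin C$ and $x-1 \notin B$) follows from pairwise commutations together with the idempotency $pp = p$. Case (2) is a simpler variant of the same manipulation. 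The analogous statements for $e_i^\star$ then follow since $e_i^\star$ is the partial inverse of $f_i^\star$.

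The main obstacle is the non-generic situation in case (1) where $x+2 \in C$ or $x-1 \in B$, since naive commutation fails and one must invoke the braid relation $pqp = qpq$. The plan is to argue that the fully-commutativity of $\mathbf{h}$, combined with the constraints imposed by the pairing rule, positions the problematic letters so that the necessary braid moves always exist and conspire to yield the required identity. This bookkeeping — tracking how the braid steps interact with the preserved portion of the word — is the delicate part of the proof.
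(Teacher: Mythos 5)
Your skeleton is right and matches the paper's strategy up to the decisive step: the decomposition $h^{i+1}=A(x+1)B$, $h^i=C(x+1)(x)D$, the observation that $x\notin h^{i+1}$ forced by the pairing, and the generic computation via commutations plus the idempotent relation are all correct and are essentially what the paper does. The genuine gap is in how you propose to treat the ``non-generic'' situation where $x-1\in B$ or $x+2\in C$. Your plan --- to show that ``the necessary braid moves always exist and conspire to yield the required identity'' --- points in the wrong direction and is left entirely unexecuted. The correct resolution is that these configurations simply cannot occur: each one forces a consecutive braid subword in a word $\equiv_{\mathcal{H}_0}\mathbf{h}$, contradicting $\mathbf{h}\in\mathcal{H}^{m,\star}$. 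For instance, if $x-1\in B$, take the largest $k$ with $[x-k,x-1]\subseteq B$; the pairing rule (since $x$ is unpaired in $h^i$) forces $[x-k,x-1]\subseteq D$ as well, and after commuting the letters $\leqslant x-k-1$ out of the way one exhibits a subword of the form $(x-1)\,x\,(x-1)$ (or its shifted analogue for $k>1$). A parallel argument shows $x+2\notin C$ in case (1), using that every letter of $[x+1,x+k]\subseteq C$ must be paired with a matching letter in $A(x+1)$. Without these impossibility arguments the equivalence in (d) is not established, and this is precisely the part you defer.

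Two smaller omissions: in case (2) you also need $x+1\notin h^i$ before the commutation argument goes through (if $x+1\in h^i\setminus h^{i+1}$, its pairing partner in $h^{i+1}$ would have to be $\leqslant x+1$, yet $>x$ because $x$ is unpaired, forcing $x+1\in h^{i+1}$, a contradiction); you call case (2) ``a simpler variant'' without noting this. Also, your deduction that full commutativity of $f_i^\star(\mathbf{h})$ follows from (d) is fine, since full commutativity is a property of the underlying $0$-Hecke element, but it does depend on (d) being complete. In short: replace the plan of ``making braid moves work'' with a proof that the obstructing letters do not exist, and the argument closes.
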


\begin{proof}
Suppose $\tilde{\mathbf{h}}:=f^\star_i(\mathbf{h}) \neq 0$.
Then by definition of $f^\star_i$, $\tilde{\mathbf{h}} = h^m \dots h^{i+2} \tilde{h}^{i+1}\tilde{h}^ih^{i-1}\dots h^1$ and $h^j$ 
is unchanged for $j\notin \{i,i+1\}$. In addition, the number of factors does not change.
	
\smallskip
To see $\mathbf{h} \equiv_{\mathcal{H}_0}\tilde{\mathbf{h}}$, it suffices to show that 
$h^{i+1}h^i \equiv_{\mathcal{H}_0} \tilde{h}^{i+1}\tilde{h}^i$. Let $x$ be the largest unpaired letter in $h^i$. By 
the bracketing procedure this implies that $x\notin h^{i+1}$. We can write $h^{i+1}$ as $w_1 w_2$, where $w_1$ is a 
word containing only letters greater than $x$, and $w_2$ is a word containing only letters smaller than $x$. We can 
write $h^{i}$ as $w_3 x w_4$, where $w_3$ contains only letters greater than $x$ and $w_4$ contains only letters 
smaller than $x$. 
	
The pairing process will result in one of the two following cases:
\begin{enumerate}
\item	
If $x+1 \in h^i \cap h^{i+1}$, then obtain $\tilde{h}^i$ by removing $x+1$ from $h^i$, and $\tilde{h}^{i+1}$ by adding 
$x$ to $h^{i+1}$.
\item
Otherwise, obtain $\tilde{h}^i$ by removing $x$ from $h^i$ and obtain $\tilde{h}^{i+1}$ by adding $x$ to $h^{i+1}$.
\end{enumerate}

We first argue that in either case we must have $x-1\notin w_2$. Assume $x-1\in w_2$ and let $k$ be the largest number 
such that the interval $[x-k,x-1]\subseteq w_2$. By assumption $k\geqslant 1$. In order for $x$ to be the largest unpaired 
letter in $h^i$, $[x-k,x-1]$ must be contained in $w_4$. We can write $w_2=(x-1)\dots (x-k) w'_2$ and 
$w_4 = (x-1)\dots (x-k)w'_4$, where all letters in $w'_2$ are smaller than $x-k-1$. When $k=1$, we have the following 
subword
\[
	(x-1) w'_2 w_3 x (x-1) \equiv_{\mathcal{H}_0} w'_2 w_3 (x-1)x(x-1),
\]
which contains a braid $(x-1)x(x-1)$. When $k>1$, we also have the following subword
	\[
	(x-k) w'_2 w_3 x (x-1)\dots (x-k+1)(x-k) \equiv_{\mathcal{H}_0} w'_2 w_3 (x-1) \dots (x-k+2)(x-k)(x-k+1)(x-k),
\]	
which also contains a braid. 
	
\smallskip \noindent
\textbf{Case (1):} Let $k$ be the largest letter such that $[x+1,x+k]\subseteq w_3$. Clearly $k\geqslant 1$. 
Suppose $k>1$, then we can write $w_3=w'_3(x+k)\dots (x+1)$. Since $x$ is the largest unpaired letter in $h^i$, 
everything in $[x+1,x+k]\subseteq w_3$ must be paired. The letter $x+1$ in $w_3$ is paired with $x+1\in w_1$, which 
implies that $x+i$ in $w_3$ is paired with $x+i\in w_1$ for all $1\leqslant i\leqslant k$. This implies that 
$[x+1,x+k]\subseteq w_1$. Then we have the following subword 
\[
	(x+1)w_2 w'_3 (x+k)\dots (x+2)(x+1) \equiv_{\mathcal{H}_0} w_2 w'_3 (x+k)\dots (x+1)(x+2)(x+1)
\]
which contains a braid. Thus, we must have $k=1$, which implies that $x+2 \notin w_3$. Write $w_1=w'_1 (x+1)$.
Then by direct computation
\begin{align*}
	h^{i+1} h^i & \equiv_{\mathcal{H}_0} w'_1 (x+1) w_2 w'_3 (x+1) x w_4 \equiv_{\mathcal{H}_0} w'_1(x+1) (x+1) w_2 w'_3 x w_4  \\
	&\equiv_{\mathcal{H}_0}  w'_1(x+1) w_2 w'_3 x x w_4 
	\equiv_{\mathcal{H}_0} \left(w'_1(x+1) x w_2\right) \left(w'_3 x w_4\right) 
	= \tilde{h}^{i+1} \tilde{h}^i.
\end{align*}
	
\smallskip \noindent
\textbf{Case (2):}
We claim that if $x+1\notin h^{i+1}$, then $x+1\notin h^i$. Otherwise the $x+1\in h^i$ must be paired with some 
$z\in h^{i+1}$, so we have $z\leqslant x+1$. But $x$ is unpaired, which implies $z>x$, that gives us a contradiction.	
Hence $x+1\notin w_3$. Recall that $x-1\notin w_2$. Therefore, by a straightforward computation
\[
	h^{i+1} h^i = w_1 w_2 w_3 x w_4 \equiv_{\mathcal{H}_0} \left(w_1 x w_2\right)\left( w_3 w_4\right) 
	\equiv_{\mathcal{H}_0} \tilde{h}^{i+1}\tilde{h}^i.
\]
The above arguments show that $h^{i+1}h^i \equiv_{\mathcal{H}_0} \tilde{h}^{i+1}\tilde{h}^i$, thus
$\mathbf{h} \equiv_{\mathcal{H}_0}\tilde{\mathbf{h}}$, and the total length of the decreasing factorization are unchanged 
under $f^\star_i$. Furthermore, the excess remains unchanged under $f^\star_i$. 
	
Similar arguments hold for $e^\star_i$.
\end{proof}

\begin{remark}
\label{rm: star}
Here we summarize several results from the proof that will be needed later.
Namely, if $x$ is the largest unpaired letter in $h^i$, then
\begin{itemize}
	\item $x-1\notin h^{i+1}$.
	\item One and only one of the three statements hold: $x+1 \in h^{i+1}\cap h^i$,  $x+1\notin h^{i+1} \cup h^i$, 
	and $x+1\in h^{i+1}, x+1\notin h^{i}$.
\end{itemize}
\end{remark}

It will be shown in Section~\ref{section.residue map} that $\mathcal{H}^{m,\star}$ is indeed a Stembridge
crystal of type $A_{m-1}$ (for an introduction to crystal and terminology, see~\cite{BumpSchilling.2017}).

\subsection{The crystal on set-valued tableaux}
\label{section.SVT}

In this section, we review the type $A$ crystal structure on set-valued tableaux introduced in~\cite{MPS.2018}.
In fact, in~\cite{MPS.2018} the authors only considered the crystal structure on straight-shaped set-valued
tableaux. Here we consider the crystal on skew shapes as well, see Theorem~\ref{theorem.SVT crystal}.

We use French notation for partitions $\lambda=(\lambda_1,\lambda_2,\dots)$ with $\lambda_1\geqslant \lambda_2
\geqslant \cdots \geqslant 0$, that is, in the Ferrers diagram for $\lambda$, the largest part $\lambda_1$ is at the bottom.

\begin{definition}[\cite{Buch.2002}]
A \defn{semistandard set-valued tableau} $T$ is the filling of a skew shape $\lambda/\mu$ with nonempty subsets of positive integers such that:
\begin{itemize}
\item for all adjacent cells $A$, $B$ in the same row with $A$ to the left of $B$, we have $\max(A)\leqslant \min(B)$,
\item for all adjacent cells $A$, $C$ in the same column with $A$ below $C$, we have $\max(A)< \min(C)$.
\end{itemize}
The \defn{weight} of $T$, denoted by $\wt(T)$, is the integer vector whose $i$-th component 
counts the number of $i$'s that occur in $T$. The \defn{excess} of $T$ is defined as $\ext(T) = |\wt(T)|-|\lambda|$.
We denote the set of all semistandard set-valued tableaux of shape $\lambda/\mu$ by \defn{$\SVT(\lambda/\mu)$}. 
Similarly, if the maximum entry is restricted to $m$, the set is denoted by $\SVT^m(\lambda/\mu)$.
\end{definition}

We now review the crystal structure on semistandard set-valued tableaux given in~\cite{MPS.2018}. We state the definition on skew shapes rather than just straight shapes.

\begin{definition} \label{def: sksvt}
Let $T\in \SVT^m(\lambda/\mu)$. We employ the following pairing rule for letters $i$ and $i+1$.
Assign $-$ to every column of $T$ containing an $i$ but not an $i+1$. Similarly, assign $+$ to 
every column of $T$ containing an $i+1$ but not an $i$. Then, successively pair each $+$ that is to the left
of and adjacent to a $-$, removing all paired signs until nothing can be paired.

The \defn{operator $f_i$} changes the $i$ in the rightmost 
column with an unpaired $-$ (if this exists) to $i+1$, except if the 
cell $b$ containing that $i$ has a cell to its right, denoted $b^\rightarrow$, that contains both $i$ and $i+1$. In that 
case, $f_i$ removes $i$ from $b^\rightarrow$ and adds $i+1$ to $b$. Finally, if no unpaired $-$ exists, then $f_i$ 
annihilates $T$.

Similarly, the \defn{operator $e_i$} changes the $i+1$ in the leftmost 
column with an unpaired $+$ (if this exists) to $i$, except if the cell $b$ containing that $i+1$ has a cell to its left, 
denoted $b^\leftarrow$, that contains both $i$ and $i+1$. In that case, $e_i$ removes $i+1$ from $b^\leftarrow$ and 
adds $i$ to $b$. Finally, if no unpaired $+$ exists, then $e_i$ annihilates $T$.

Based on the pairing procedure above, $\varphi_i(T)$ is the number of unpaired $-$ while $\varepsilon_i(T)$ is the 
number of unpaired $+$. 
\end{definition}

One can easily show that the crystal on $\SVT^m(\lambda/\mu)$ of Definition~\ref{def: sksvt} defines a seminormal
crystal (for definitions see~\cite{BumpSchilling.2017}).
It was proved in~\cite[Theorem 3.9]{MPS.2018} that the above described operators $e_i$ and $f_i$ define
a type $A_{m-1}$ Stembridge crystal structure on $\SVT^m(\lambda)$. We claim that their proof goes through also for 
skew shapes.

\begin{theorem}
\label{theorem.SVT crystal}
The crystal $\SVT^m(\lambda /\mu)$ of Definition~\ref{def: sksvt} is a Stembridge crystal of type $A_{m-1}$.
\end{theorem}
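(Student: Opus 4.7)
The plan is to reduce the claim to~\cite[Theorem 3.9]{MPS.2018} by showing that the proof of the straight-shape case carries through verbatim to the skew setting. I would proceed in three steps.

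First, I would verify that the operators $e_i$ and $f_i$ of Definition~\ref{def: sksvt} are well-defined on $\SVT^m(\lambda/\mu)$, that is, when they do not annihilate a tableau $T$, the result is again a semistandard set-valued tableau of shape $\lambda/\mu$. Since both operators only modify the entries of one or two adjacent cells, and never add or remove cells, the underlying skew shape is automatically preserved. The main thing to check is that after modifying the entries of cells $b$ and possibly $b^\rightarrow$ (respectively $b^\leftarrow$), the semistandard row and column inequalities with the remaining neighbors still hold. This is a purely local check involving only the letters $i$ and $i+1$, and the column pairing procedure of Definition~\ref{def: sksvt} is designed so that the relevant neighbor cells cannot force a violation.

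Second, I would check that $e_i$ and $f_i$ are partial inverses of each other and that the statistics $\varphi_i(T)$ and $\varepsilon_i(T)$ together with $\wt(T)$ assemble into a seminormal crystal. This follows from the same argument as in the straight-shape case, because the pairing of columns is obtained by scanning which columns contain an $i$ but not an $i+1$ (and vice versa), a procedure that is insensitive to the presence or absence of the cells of $\mu$.

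Third, and where the bulk of the work lies, I would verify the Stembridge axioms by revisiting the case analysis in the proof of~\cite[Theorem 3.9]{MPS.2018}. Because these axioms are strictly local, involving only operators $e_j, f_j$ for $j\in\{i-1,i,i+1\}$ and the corresponding letters $i-1, i, i+1, i+2$, the entire analysis reduces to examining the columns containing these letters together with the semistandard relations between adjacent cells. None of these arguments reference the global shape of the tableau; they use only the $+/-$ labels attached to columns and the constraints between letters sharing a row or column. Since the semistandard condition in $\SVT^m(\lambda/\mu)$ is imposed on pairs of adjacent cells both of which lie in $\lambda/\mu$, each case in the MPS analysis either applies without change or is vacuous (when a required neighbor cell happens to lie in $\mu$ and is therefore absent).

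The main obstacle will be bookkeeping the boundary cases in which a cell $b^\rightarrow$ or $b^\leftarrow$ referenced by the crystal operators lies just outside $\lambda/\mu$ because of the inner shape $\mu$. In such situations one should verify that the operator simply reduces to changing $i$ to $i+1$ (or $i+1$ to $i$) in $b$ without invoking the missing neighbor, and that this default behavior is compatible with each Stembridge axiom. I expect this to follow routinely, but it is the only place where the skew boundary interacts nontrivially with the MPS argument.
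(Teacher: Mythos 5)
Your overall strategy---reduce to \cite[Theorem 3.9]{MPS.2018} and argue that the proof carries over to skew shapes---is the same as the paper's, and your first two steps (well-definedness of the operators and seminormality) match what the paper asserts. But the mechanism you propose for the main step is genuinely different from, and considerably heavier than, what the paper actually does. The paper never verifies the Stembridge axioms directly: it observes that the column signature rule agrees with the row-reading signature rule by semistandardness, so that $\SVT^m(\lambda/\mu)$ embeds strictly into the tensor product $\SVT^m((\lambda_1-\mu_1)\Lambda_1)\otimes\cdots\otimes\SVT^m((\lambda_{\ell}-\mu_{\ell})\Lambda_1)$ of its rows; each single-row factor decomposes as $\bigoplus_{k=1}^m B((s-1)\Lambda_1+\Lambda_k)$ by \cite[Proposition 3.5]{MPS.2018}; and a seminormal crystal admitting a strict embedding into a Stembridge crystal is itself Stembridge. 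The only new ingredient in the skew case is the map $\Psi$ into the tensor product of rows of lengths $\lambda_j-\mu_j$, which is why the paper can say the proof is ``exactly the same.'' Your plan instead calls for revisiting ``the case analysis'' of the Stembridge axioms in \cite{MPS.2018}---but that proof \emph{is} the embedding argument just described, not a direct axiom check, so there is no such case analysis to revisit; carrying out your third step as written would mean verifying the Stembridge axioms from scratch, a substantial piece of work that your proposal asserts is routine rather than performs. This is the one real gap: the hard part of your argument is delegated to a source that does not contain it. If you adopt the row-embedding device, your boundary bookkeeping about $b^{\rightarrow}$ or $b^{\leftarrow}$ lying inside $\mu$ also largely evaporates, since the inner shape enters only through the lengths $\lambda_j-\mu_j$ of the row factors.
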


\begin{proof}
Since the proof is exactly the same as in~\cite[Theorem 3.9]{MPS.2018}, we just state the outline and give a brief 
description. For details we refer to~\cite{MPS.2018}.

First note that the signature rule given by column-reading is compatible with the signature rule given by row-reading 
(top to bottom, left to right, and arrange the letters in the same cell by descending order) by semistandardness.
Hence we may consider the crystal to live inside the tensor product of its rows. A single-row semistandard 
set-valued tableaux of a fixed shape is isomorphic to a Stembridge crystal, as shown in~\cite[Proposition 3.5]{MPS.2018}:
\[
	\Phi_s \colon \SVT^m(s\Lambda_1)\to \bigoplus_{k=1}^{m} B((s-1)\Lambda_1+\Lambda_k),
\] 
where $\Lambda_k$ are the fundamental weights of type $A_{m-1}$.

Let $\lambda = (\lambda_1,\dots,\lambda_{\ell})$ and $\mu = (\mu_1,\dots,\mu_{\ell})$ (the last couple $\mu_i$ could 
be zero) be two partitions such that $\mu \subseteq \lambda$. Construct the map below, which is a strict crystal embedding:
\[
	\Psi \colon \SVT^m(\lambda/\mu) \to \SVT^m((\lambda_1-\mu_1)\Lambda_1) \otimes 
	\SVT^m((\lambda_2-\mu_2)\Lambda_1)\otimes \dots \otimes \SVT^m((\lambda_{\ell}-\mu_{\ell})\Lambda_1).
\]
Thus, we have a strict crystal embedding:
\[
	(\Phi_{\lambda_1-\mu_1}\oplus \dots \oplus \Phi_{\lambda_{\ell}-\mu_{\ell}})\circ \Psi \colon
	\SVT^m(\lambda/\mu)\to \bigotimes_{j=1}^{\ell} 
	\left(\bigoplus_{k=1}^m B((\lambda_j-\mu_j)\Lambda_1+\Lambda_k)\right).
\]
Since $\SVT^m(\lambda/\mu)$ is a seminormal crystal, we can conclude that it is a Stembridge crystal.
\end{proof}

\subsection{The residue map}
\label{section.residue map}

In this section, we define the residue map from set-valued tableaux of skew shape to fully-commutative decreasing
factorizations in the 0-Hecke monoid.  We then show in Theorem~\ref{thm: star} that the residue map intertwines with the 
crystal operators, proving that $\mathcal{H}^{m,\star}$ is indeed a crystal of type $A_{m-1}$ (see 
Corollary~\ref{corollary.crystal}).

\begin{definition}
Given $T\in \SVT^m(\lambda/\mu)$, we define the \defn{residue map} $\res: \SVT^m(\lambda/\mu)\to\mathcal{H}^{m}$ 
as follows. Associate to each cell $(i,j)$ in $\lambda/\mu$ its content $\ell(\lambda)+j-i$, where $\ell(\lambda)$ is the 
number of parts in $\lambda$.
Produce a decreasing factorization $\mathbf{h}=h^m h^{m-1}\ldots h^2 h^1$ by declaring $h^k$ to be the (possibly empty)
sequence formed by taking the contents of all cells in $T$ containing the entry $k$ and then arranging the contents
in decreasing order. This defines $\res(T):=\mathbf{h}$. 
\end{definition}

\begin{example}
Let $T$ be the set-valued tableau of skew shape $(2,2)/(1)$
\[
\ytableausetup{notabloids}
T = 
\begin{ytableau}
23 & 3\\
\none & 12
\end{ytableau}\,.
\]
The content of each cell in $T$ is denoted by a subscript as follows:
\[
\ytableausetup{notabloids} 
\begin{ytableau}
23_1 & 3_2\\
\none & 12_3
\end{ytableau}\,.
\]
To read off the third factor, we search for all cells with an entry 3; these cells have contents 1 and 2, 
so we have 21 in the third factor. Altogether, we obtain $\res(T)=(21)(31)(3) \in \mathcal{H}^3$.
\end{example}

The image of the residue map $\res$ is $\mathcal{H}^{m,\star}$, the set of fully-commutative decreasing factorizations 
into $m$ factors. In fact, $\res$ is a bijection from semistandard set-valued skew tableaux on the alphabet $[m]$ to 
$\mathcal{H}^{m,\star}$ up to shifts in the skew shape.

For this purpose, let us describe the inverse of the residue map.
Let $\mathbf{h}=h^m h^{m-1}\dots h^2 h^1\in \mathcal{H}^{m,\star}$.
Begin by filling the diagonals of content that appear in $h^m$ by the entry $m$. As the resulting $T$ is supposed to be
of skew shape, the cells containing $m$ along increasing diagonals need to go weakly down from left to right. If these 
diagonals are consecutive, then the cells have to be in the same row of $T$ since $T$ is semistandard. Continue the 
procedure above by putting entry $i$ into the diagonals specified by $h^i$ for all $i=m-1,m-2,\dots,1$, applying the condition
that the resulting filling should be semistandard. 

\begin{proposition} \label{prop.res.image}
If $\mathbf{h}=h^m h^{m-1}\dots h^2 h^1\in\mathcal{H}^{m,\star}$, then the above algorithm is well-defined up to
shifts along diagonals. It produces a skew semistandard set-valued tableau $T$ such that $\res(T)=\mathbf{h}$.
\end{proposition}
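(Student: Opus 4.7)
My plan is to prove the three sub-claims—well-definedness up to diagonal shifts, validity as a skew semistandard set-valued tableau, and $\res(T) = \mathbf{h}$—together by induction on $m$, processing factors in the order dictated by the algorithm (from $h^m$ down to $h^1$). Let $T_{k}$ denote the partial tableau after placing the entries coming from $h^m, h^{m-1}, \ldots, h^{k}$, with $T_{m+1}$ empty. I would prove inductively that $T_k$ is a valid skew semistandard set-valued tableau with entries in $\{k, k+1, \ldots, m\}$, unique up to a common shift along diagonals, and that reading contents of entries equal to $j$ in decreasing order recovers $h^j$ for each $j \geqslant k$.

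For the inductive step of passing from $T_{k+1}$ to $T_k$, I would analyze the forced placement of entries of value $k$ given $h^k = d_1 > d_2 > \cdots > d_r$. Maximal runs of consecutive diagonals in $h^k$ (i.e., maximal subsequences where $d_j = d_{j+1}+1$) must occupy adjacent cells within a single row, since in a skew shape two cells on consecutive diagonals in the same row are forced side-by-side; gaps between runs force new rows strictly below. This is because equal entries cannot share a column (the strict column condition) and the ambient shape must remain a partition. The free overall choice of row for the first cell is exactly the ``shift along diagonals'' indeterminacy in the statement.

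The main obstacle is verifying that when we add an entry $k$ at content $d$, the placement is compatible with $T_{k+1}$. Two cases arise: either $T_{k+1}$ already has a cell at content $d$—in which case we record $k$ as an additional element of that cell's entry set, and need to verify the new $\min$ respects the column-strict condition relative to the cell immediately below—or we must create a new cell at content $d$, and need to verify it fits below the existing configuration while preserving a valid partition shape. I expect the key lemma to be: any semistandardness or shape failure produced by this algorithm can be traced to a consecutive braid subword $d(d+1)d$ or $d(d-1)d$ in $\mathbf{h}$, contradicting full-commutativity. For example, a column forced to contain two cells both with $\max = k$ would produce two occurrences of $d$ in $h^k$ (impossible, as factors are strictly decreasing) or force a braid across adjacent factors; similarly, a failure to place entry $k$ at content $d$ without overlapping an earlier cell with $\max = k$ would produce a $d(d\pm 1)d$ subword. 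The bookkeeping is the main difficulty; it amounts to case analysis enumerating the local configurations on diagonals $d-1, d, d+1$ and converting each obstruction into an explicit braid.

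Finally, $\res(T) = \mathbf{h}$ is a direct consequence of the construction: at every step the entry $k$ is placed on precisely the diagonals listed in $h^k$, so reading those contents back in decreasing order reproduces $h^k$ for every $k$, yielding the factorization $\mathbf{h}$.
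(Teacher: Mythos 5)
Your overall strategy---process the factors from $h^m$ down to $h^1$, argue that the placement of each entry is forced up to diagonal shifts, and show that any failure of skew-shape or semistandardness can be converted into a braid subword of $\mathbf{h}$---is exactly the strategy of the paper's proof. However, the proposal has a genuine gap at its center: the ``key lemma'' that every obstruction yields a braid is only stated as an expectation (``I expect the key lemma to be\dots'', ``the bookkeeping is the main difficulty''), and the case analysis that would establish it is never carried out. This lemma \emph{is} the proof; everything else is routine. Moreover, the obstructions you propose to analyze do not match the ones that actually arise. A column containing two cells with the same minimum $k$ is not the failure mode; the real failures are shape failures: (i) two row-adjacent cells on diagonals $i$ and $i+1$ with nothing below them, where a later factor forces a new cell on diagonal $i+1$ directly below the left one but no cell on diagonal $i+2$ completes the row beneath, and (ii) the column-transposed analogue. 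In each case one must exhibit integers $x$ (the factor forcing the new cell) chosen \emph{minimally} so that no intervening factor contains the letter $i+2$ (resp.\ $i-1$); only then can the two occurrences of $i+1$ (resp.\ $i$) be commuted past everything in between and, after an application of the idempotent relation, juxtaposed with the $i$ (resp.\ $i+1$) to form the braid $i{+}1\;i\;i{+}1$ (resp.\ $i\;i{+}1\;i$). Without identifying these configurations and the minimality argument that makes the commutations legal, the reduction to a braid does not go through.

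A secondary, smaller issue: you attribute the non-uniqueness solely to ``the free overall choice of row for the first cell,'' but the actual indeterminacy is finer---when the labels in $\mathbf{h}$ have a gap of size at least $3$, the resulting diagram is disconnected and its components can be shifted along diagonals independently of one another (compare the two tableaux $T_1$ and $T_2$ in Example~\ref{eg: inverse_res1}). Your induction statement ``unique up to a common shift'' is therefore slightly too strong as stated.
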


\begin{proof}
We shall show more generally that at any given stage in the algorithm for the inverse of the residue map above, 
the tableau $T$ produced is of skew shape if and only if $\mathbf{h}$ is fully-commutative.

Assume that $T$ is not of skew shape. Consider the earliest stage in the algorithm when the
produced tableau is not of skew shape. Then, either one of the following cases must have occurred for the first time.

\smallskip \noindent
\textbf{Case 1:} There are adjacent cells with nonempty sets $A$ and $B$ (where $\max(A)\leqslant \min(B)$) in the 
same row on diagonals $i$ and $i+1$ respectively with no cells appearing directly below these cells, as illustrated on 
the left side of Figure \ref{fig: 121.pattern.row}. Moreover, by minimality, we have an integer $x$ with the following 
properties:
	\begin{enumerate}
		\item $i+1\in h^x$ and $x<\min(A)$,
		\item there does not exist a $y$ with $x \leqslant y < \min(B)$ and $i+2\in h^y$.
	\end{enumerate}
By applying semistandardness, a cell containing $x$ is created directly below the cell containing the set $A$ as in 
the right side of Figure \ref{fig: 121.pattern.row}. Furthermore, by (2), for all $x \leqslant y < \min(B)$, we have that every 
letter in $h^y$ is either at most $i+1$ or at least $i+3$. It follows that, after possibly applying commutativity ($i+1$ with 
letters at most $i-1$ or at least $i+3$) and the idempotent relation, $h^{\min(B)}\dots h^{x+1}h^x$ is
equivalent to one containing the braid subword $i+1\; i\; i+1$. This implies that $\mathbf{h}$ is equivalent to a Hecke 
word containing the same braid subword.

\smallskip \noindent
\textbf{Case 2:} There are adjacent cells with nonempty sets $A$ and $B$ in the same column on diagonals $i+1$ 
and $i$ respectively with no cells appearing directly to the left of these cells, as illustrated on the left side of 
Figure~\ref{fig: 212.pattern.column}. Moreover, by minimality, we have an integer $x$ with the following properties:
	\begin{enumerate}
		\item $i\in h^x$ and $x\leqslant\min(A)$,
		\item there does not exist a $y$ with $x < y \leqslant\min(B)$ and $i-1\in h^y$.
	\end{enumerate}
By applying semistandardness, a cell containing $x$ is created directly to the left of the cell containing the set $A$ as 
in the right side of Figure \ref{fig: 212.pattern.column}. Furthermore, by (2), for all $x<y\leqslant \min(B)$, we have that every 
letter in $h^y$ is either at most $i-2$ or at least $i$. Similar to the argument in Case 1, 
$h^{\min(B)}\dots h^{x+1}h^x$ is equivalent to one containing the braid subword $i\; i+1\; i$. This implies that
$\mathbf{h}$ is equivalent to a word in $\mathcal{H}_0(n)$ containing the same braid subword.

The above arguments imply that the image of $\res$ is contained in $\mathcal{H}^{m,\star}$. Conversely, if
$\mathbf{h}$ is fully-commutative, then the algorithm for $\res^{-1}$ does not produce Case 1 or Case 2 above and
hence the resulting tableau $T$ is of skew shape which in turn implies that the algorithm is well-defined (up to shifts
along the diagonal if a gap of size at least 3 occurs in the labels).
\end{proof}

If the skew shape $\lambda/\mu$ of the tableau $T$ is known, then one may simplify the procedure above noting that 
the filling of $i$ specified by letters in $h^i$ must occur along a horizontal strip for all $i=m,m-1,\ldots,1$. In this case,
the recovered tableau $T$ is unique and there is no shift ambiguity if a gap of size at least 3 occurs in the labels.

\begin{figure}[pbt]
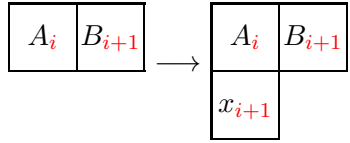

\raisebox{1.2cm}{
\ytableausetup{notabloids, boxsize=2.3em} 
\begin{ytableau}
A_{\color{red}{i}} & B_{\color{red}{i+1}}
\end{ytableau} $\longrightarrow$
\begin{ytableau}
A_{\color{red}{i}} & B_{\color{red}{i+1}}\\
x_{\color{red}{i+1}}
\end{ytableau}
}
\caption{A forbidden case while inverting the residue map.}
\label{fig: 121.pattern.row}
\end{figure}

\begin{figure}[pbt]
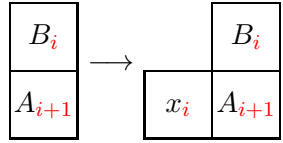

\raisebox{1.2cm}{
\ytableausetup{notabloids, boxsize=2.3em} 
\begin{ytableau}
B_{\color{red}{i}}\\
A_{\color{red}{i+1}}
\end{ytableau} $\longrightarrow$
\begin{ytableau}
\none & B_{\color{red}{i}}\\
x_{\color{red}{i}} & A_{\color{red}{i+1}}
\end{ytableau}
}
\caption{Another forbidden case while inverting the residue map.}
\label{fig: 212.pattern.column}
\end{figure}

\begin{example}\label{eg: inverse_res1}
Let $\mathbf{h}=(61)(752)(75)(762)$ be a decreasing factorization of $w=651762$.

In the algorithm for the inverse of the residue map, the entry 4 is placed on diagonal 1 and 6, respectively.
Due to semistandardness, the entry 3 in diagonal 2 must be placed below the 4 in diagonal 1, 
while the 3's in diagonals 5 and 7 are respectively to the left and below the 4 in diagonal 6.
Continuing with the remaining fillings, we have two possibilities:
\[
T_1=
\raisebox{1.2cm}{
\ytableausetup{notabloids, boxsize=2em} 
\begin{ytableau}
4_{\color{red}{1}}\\
13_{\color{red}{2}}\\
\none & \none & 23_{\color{red}{5}} & 4_{\color{red}{6}}\\
\none & \none & 1_{\color{red}{6}} & 123_{\color{red}{7}}\\
\end{ytableau}}\,,
\]
or
\[
T_2=
\raisebox{1.6cm}{
\ytableausetup{notabloids, boxsize=2em} 
\begin{ytableau}
4_{\color{red}{1}}\\
13_{\color{red}{2}}\\
\none\\
\none & 23_{\color{red}{5}} & 4_{\color{red}{6}}\\
\none & 1_{\color{red}{6}} & 123_{\color{red}{7}}\\
\end{ytableau}}\,,
\]
where $T_1\in\SVT^4((4,4,1,1)/(2,2))$ and $T_2\in\SVT^4((3,3,1,1,1)/(1,1,1))$. Note that they indeed just
differ by a shift along diagonals as stated in Proposition~\ref{prop.res.image}.
\end{example}

\begin{example}\label{eg: inverse_res2}
Let $\mathbf{h}=(8431)(863)(8654)(941)$ be a decreasing factorization of $w=84396541$. 
Suppose that $\mathbf{h}=\res(T)$, where $T\in\SVT^4(\lambda/\mu)$ with $\lambda/\mu=(5,5,4,2,1)/(4,4,1,1)$.

Then, we fill in $4$ along the diagonals with labels 1, 3, 4, 8 respectively, noting that the $4$ in diagonal 4 is to the
right of the $4$ in diagonal $3$ (due to the semistandardness of $T$). Continuing with the remaining fillings, we have 
\[
T=
\raisebox{1.8cm}{
\ytableausetup{notabloids, boxsize=2em} 
\begin{ytableau}
14_{\color{red}{1}}\\
\none & 34_{\color{red}{3}} & 4_{\color{red}{4}}\\
\none & 12_{\color{red}{4}} & 2_{\color{red}{5}} & 23_{\color{red}{6}}\\
\none & \none & \none & \none & 234_{\color{red}{8}}\\
\none & \none & \none & \none & 1_{\color{red}{9}}
\end{ytableau}}\,.
\]
\end{example}

\begin{theorem}
\label{thm: star}
The crystal on set-valued tableaux $\SVT^m(\lambda/\mu)$ and the crystal on decreasing factorizations $\mathcal{H}^{m,\star}$
intertwine under the residue map. That is, the following diagrams commute:
\[
	\begin{tikzcd}
	\SVT^m(\lambda/\mu) \arrow["f_k",d] \arrow[r,"\res"] & \mathcal{H}^{m,\star} \arrow[d,"f^{\star}_k"] \\
	\SVT^m(\lambda/\mu) \arrow[r,"\res"]& \mathcal{H}^{m,\star}
	\end{tikzcd}
	\qquad \qquad \qquad 
	\begin{tikzcd}
	\SVT^m(\lambda/\mu) \arrow["e_k",d] \arrow[r,"\res"] & \mathcal{H}^{m,\star} \arrow[d,"e^{\star}_k"] \\
	\SVT^m(\lambda/\mu) \arrow[r,"\res"]& \mathcal{H}^{m,\star}.
	\end{tikzcd}
\]
\end{theorem}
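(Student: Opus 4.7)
The plan is to prove commutativity of the first diagram (for $f_k$), from which the $e_k$-case follows since both pairs of operators are partial inverses. Because $f_k$ modifies only cells whose entry set contains $k$ or $k+1$ and $f_k^\star$ modifies only the factors $h^k$ and $h^{k+1}$, and since under $\res$ entries labeled $k$ correspond bijectively (via content) to letters of $h^k$, the argument localizes to these two factors and to the cells of $T$ containing $k$ or $k+1$.

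First I would set up a column-to-content dictionary. Semistandardness forces each column of $T$ to contain at most one cell with $k$ in its entry set and at most one with $k+1$. If both are present, they either coincide in a single cell (call this type C, with common content $c\in h^k\cap h^{k+1}$) or occupy vertically adjacent cells with $k$ below $k+1$ (type II, with $k$-content $c$ and $k+1$-content $c-1$). Applying the ``weakly down as the diagonal increases'' rule from Proposition~\ref{prop.res.image}, ordering the $k$-cells left-to-right by column coincides with ordering their contents increasingly, and similarly for $k+1$-cells; in particular, the rightmost column containing a $k$-cell has the largest content appearing in $h^k$, namely the largest letter of $h^k$.

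The main step is to show that the column bracket-matching of Definition~\ref{def: sksvt} and the factor pairing of Definition~\ref{def: star} produce identical matchings under this dictionary. I proceed case-by-case: type~C and type~II columns contribute no sign, but their letters are automatically paired in the factor rule (for type~II, the $k+1$-letter $c-1$ pairs with the $k$-letter $c$ as the smallest available $a\geq c-1$ in $h^k$). For $\pm$ columns, adjacent bracket cancellation corresponds to the greedy smallest-$a\geq b$ rule. The crucial input is that semistandardness rules out exactly the configurations that would cause the two rules to diverge: whenever one has $q\in h^{k+1}$, $q+1\in h^k$, and $q\notin h^k$ in a way that would produce a mismatch, the forced cell structure places the two corresponding cells in the same column, making it type~II and absorbing the pair in both matchings.

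Once the matchings agree, the rightmost unpaired $-$ column contains the $k$-cell of content $x$, where $x$ is the largest unpaired letter of $h^k$. The two cases of $f_k$ then match the two cases of $f_k^\star$: the special case (the right-neighbor cell contains both $k$ and $k+1$) occurs precisely when that cell has content $x+1\in h^k\cap h^{k+1}$, and the SVT action of removing $k$ from the right cell while adding $k+1$ to the current cell translates to removing $x+1$ from $h^k$ and adding $x$ to $h^{k+1}$, i.e., case~(1) of $f_k^\star$; the ordinary SVT action of changing $k$ to $k+1$ at content $x$ gives case~(2). The hard part is the pairing-equivalence step, which requires a delicate case analysis showing that semistandardness prevents any discrepancy between the column-wise bracket matching and the content-wise greedy matching.
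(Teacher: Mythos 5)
Your overall architecture is the same as the paper's: reduce to showing that the column signature rule on $T$ and the greedy pairing rule on $\res(T)$ single out the same unpaired letters, then match the two cases of $f_k$ with the two cases of $f^\star_k$ via the content dictionary (your final paragraph is essentially the paper's claims (2) and (3), and both are fine). The gap is in the pairing-equivalence step. You assert that the two procedures produce \emph{identical matchings}, and in particular that in a type~II column the $k+1$ of content $c-1$ is paired by the factor rule with the $k$ of content $c$ sitting above it. This is false whenever $c-1$ also lies in $h^k$: the greedy rule then pairs that $k+1$ with the $k$ of content $c-1$, which semistandardness forces into the column immediately to the left (itself a type~II column). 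For instance, take two adjacent columns whose cells in row $r$ each contain $k+1$ and whose cells in row $r-1$ each contain $k$; writing $c$ for the content of the $k$ in the right column, one has $h^k\supseteq\{c-1,c\}$ and $h^{k+1}\supseteq\{c-2,c-1\}$, the signature rule assigns no signs to either column, but the factor rule pairs $c-1\in h^{k+1}$ with $c-1\in h^k$ (in the other column) and $c-2\in h^{k+1}$ with $c\in h^k$. The two matchings are genuinely different; only the \emph{sets} of paired letters agree, and it is that weaker statement that has to be proved and then fed into the identification of the rightmost unpaired $-$ with the largest unpaired letter of $h^k$.

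Relatedly, the configuration you single out as the potential source of divergence ($q\in h^{k+1}$, $q+1\in h^k$, $q\notin h^k$) is not the problematic one; that case is benign. The case that actually requires work is a single content $a$ lying in both $h^k$ and $h^{k+1}$ but in \emph{different} cells — necessarily on the same diagonal, one column apart — which propagates into a maximal run of type~II columns as above. There one must show that every $k$ and $k+1$ in that run ends up paired under the greedy rule even though the pairing crosses columns, so that the run contributes nothing to either unpaired set. This is precisely Case (c) of the paper's proof, and it is the step your sketch defers with ``requires a delicate case analysis''; as written, the claimed dictionary between the two matchings would fail on exactly this configuration, so the argument is not complete without it.
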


\begin{proof}
Let $T\in \SVT^m(\lambda/\mu)$, $\mathbf{h}=\res(T)$ and $\ell = \ell(\lambda)$.
We prove the following three statements associated to $f_k(T)$ and $f^{\star}_k(\mathbf{h})$.

\smallskip \noindent
(1) We claim that if there is no unpaired $k$ in $T$, then $f^{\star}_k$ annihilates $\mathbf{h}$.  
Furthermore, if the rightmost unpaired $k$ in cell $b$ of $T$ has content $x$, then $x$ is also the largest unpaired 
letter in $h^k$.

\smallskip

For the proof of (1) it suffices to notice that the signature rule on tableaux is equivalent to the pairing process for 
decreasing factorizations of $\mathcal{H}_0(n)$. We rephrase the pairing procedure for decreasing factorizations on tableaux:
\begin{itemize}
	\item At the beginning, no letter is paired.
	\item Then start with the rightmost column and work westward.
	\item Successively, for each $k+1$, compute its content $a$, then pair it with the $k$ of smallest content weakly greater than $a$ that is yet unpaired. 
\end{itemize}
Next, we argue that the signature rule yields the same result on the rightmost unpaired letter. Assume we are looking 
at cell $b$ containing the current $k+1$ with content $a$.

\smallskip \noindent
\textbf{Case (a):} Suppose there is no unpaired $k$ with content $a$ but at least one unpaired $k$ with  strictly 
greater content(s). Then pair it with the current $k+1$. This is the direct signature rule.

\smallskip \noindent
\textbf{Case (b):} Suppose there is no unpaired $k$ with content weakly greater than $a$, then this $k+1$ is unpaired. 
This is also the direct signature rule.

\smallskip \noindent
\textbf{Case (c):} Suppose there is an unpaired $k$ with content $a$. Then it must be either in the same cell $b$, or one 
row below and one column to the left of $b$ on the diagonal labeled $a$. If they are in the same cell, then the pairing is the
 direct signature rule. 

Otherwise, there must be cells to the left and below $b$ since the shape is skew. Suppose cell $b$ is in row $r$. Consider 
the rightmost entry in cell $(r,j)$ in row $r$ containing a $k+1$, and the leftmost entry in cell $(r-1,q)$ in row $r-1$ 
containing a $k$. Considering this as the first of a consecutive occurrence, cell $b$ is cell $(r,j)$, so we have $\ell+j-r=a$. 
By semistandardness and the condition that the shape is skew, we can partially fill out the involved 
subtableau of $T$ for rows $r-1,r$ from column $q$ to $j$:
\begin{center}
	\begin{tabular}{|l|l|l|l|l|}
		\hline
		$\color{red}{k+1\;}_{\color{red}{\ell+q-r}}$ & $\color{blue}{k+1\;}_{\color{red}{\ell+q+1-r}} $ &
		$\dots$ &
		$\color{purple}{k+1\;}_{\color{red}{\ell+j-1-r}} $ &
		$\color{darkgreen}{k+1\dots}_{\color{red}{\ell+j-r}} $ \\ \hline
		$\dots \textcolor{blue}{k\;}_{\color{red}{\ell+q-r+1}}$ 	& $\color{purple}{k\;}_{\color{red}{\ell+q-r}}$ &
		$\dots$ &
		$\color{darkgreen}{k\;}_{\color{red}{\ell+j-r}}$ &
		$\color{red}{k\;}_{\color{red}{\ell+j-r+1}}$\\ \hline
	\end{tabular}\;.
\end{center}
All the cells $(s,t)$ with $q<t<j$ and $s\in \{r,r-1\}$ and the cells $(r,q)$ and $(r-1,j)$ are single-valued by 
semistandardness as shown in the above figure. 

From the $k+1$ in $(r,j)$, we start the pairing process. First, we claim that the $k$ in cell $(r-1,j)$ must be unpaired at 
this point. Suppose that there is a $k+1$ to the east of cell $(r,j)$ with content smaller or equal to $\ell+j-r+1$, then it must be 
cell $(r,j+1)$, which violates that $(r,j)$ is the rightmost cell in row $r$ containing a $k+1$.
Then the pairing says the $k+1$ in cell $(r,t)$ pairs with the $k$ in cell $(r-1,t-1)$ for $q<t\leqslant j$. Lastly, 
the $k+1$ in cell $(r,q)$ has to pair with the previously unpaired $k$ in cell $(r-1,j)$ since there are no unpaired 
$k$ with label greater or equal to $\ell+q-r$ and smaller than $\ell+j-r+1$.

Although the pairing is different than the usual signature rule pairing, which pairs $k+1,k$ in the same column, the 
$2(j-q+1)$ letters end up being paired. Since it will not influence which one will be the rightmost unpaired letter, it is still 
equivalent to the signature rule.

So in any case, the pairing is equivalent to the signature rule. Thus, the rightmost unpaired $k$ in $T$ corresponds
to the largest unpaired letter in $h^k$.

\smallskip \noindent
(2) We claim that if $f_k$ changes the rightmost unpaired $k$ in $T$ to a $k+1$ (with content $x$) without moving it, 
then $f^{\star}_k$ moves a letter $x$ from $h^k$ to $h^{k+1}$.

Since $f_k$ does not need to move any letter, it means the cell to the right of $b$, denoted by $b^{\rightarrow}$, 
does not contain a $k$. 
It is the only cell with content $x+1$ that could contain a $k$. This implies that $x+1\notin h^k$. 
By Definition~\ref{def: star}, $f^{\star}_k$ moves $x$ from $h^k$ to $h^{k+1}$.

\smallskip \noindent

(3) We claim the following. If $f_k$ changes a $k$ from $b^{\rightarrow}$ into a $k+1$ and moves to cell $b$, 
then $f^{\star}_k$ removes an $x+1$ from $h^k$ and changes it to an $x$ in $h^{k+1}$.

That $f_k$ needs to move a number means that $k$ and $k+1$ are in $b^{\rightarrow}$, which implies that 
$x+1\in h^k \cap h^{k+1}$. By Definition~\ref{def: star}, $f^{\star}_k$ removes the $x+1$ from $h^k$ and adds an $x$ 
to $h^{k+1}$.

We have proved the three statements and they complete the proof that $f_k$ and $f^{\star}_k$ intertwine under the residue
map. The proof is similar for $e_k$ and $e^{\star}_k$.
\end{proof}

\begin{corollary}
\label{corollary.crystal}
The set $\mathcal{H}^{m,\star}$, together with crystal operators $e^{\star}_i$ and $f^{\star}_i$ for $1\leqslant i<m$ 
and weight function $\wt$ defined in Definition~\ref{def: star}, is a Stembridge crystal.
\end{corollary}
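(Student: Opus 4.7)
The plan is to transfer the Stembridge crystal structure from set-valued tableaux to decreasing factorizations via the residue map. Concretely, given $\mathbf{h} \in \mathcal{H}^{m,\star}$, Proposition~\ref{prop.res.image} produces a skew shape $\lambda/\mu$ and a tableau $T \in \SVT^m(\lambda/\mu)$ with $\res(T) = \mathbf{h}$ (the ambiguity along diagonals does not affect the crystal structure, since it only amounts to a global shift of the shape). By Theorem~\ref{theorem.SVT crystal}, $\SVT^m(\lambda/\mu)$ is a Stembridge crystal of type $A_{m-1}$, and by Theorem~\ref{thm: star}, the residue map intertwines $e_k, f_k$ with $e^\star_k, f^\star_k$. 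Therefore $\res$ restricts to a crystal isomorphism from the connected component of $T$ onto the connected component of $\mathbf{h}$.

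What remains is to verify that the auxiliary data agree. First I would check that the weight functions match: the length of the factor $h^k$ in $\mathbf{h}$ equals the number of $k$'s in $T$, which is the $k$-th component of $\wt(T)$, so $\wt(\res(T)) = \wt(T)$. Next I would observe that the pairing rule used in Definition~\ref{def: star} is, via statement (1) in the proof of Theorem~\ref{thm: star}, precisely translated from the signature rule on $T$ for the letters $k$ and $k{+}1$. Consequently $\varepsilon_k$ and $\varphi_k$ computed on $\mathbf{h}$ agree with those computed on $T$, so in particular $e^\star_k$ (resp.\ $f^\star_k$) annihilates $\mathbf{h}$ exactly when $e_k$ (resp.\ $f_k$) annihilates $T$.

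Since the Stembridge axioms (seminormality, plus the local axioms $\mathbf{P1}$--$\mathbf{P6}$, $\mathbf{P5'}$, $\mathbf{P6'}$) are conditions that depend only on the values $\varepsilon_i, \varphi_i$ at a vertex and its neighbors under $e_i, f_i, e_j, f_j$, they transfer through any weight- and signature-preserving bijection that commutes with all crystal operators. All of these match between $T$ and $\res(T)$ by the previous paragraph, so the axioms holding on $\SVT^m(\lambda/\mu)$ immediately imply that they hold at $\mathbf{h} \in \mathcal{H}^{m,\star}$. Ranging over all $\mathbf{h}$ and the corresponding skew shapes covers the entire set $\mathcal{H}^{m,\star}$, yielding the desired Stembridge crystal structure.

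The only mildly delicate point is handling the diagonal-shift nondeterminism in $\res^{-1}$: I would note that shifting the skew shape along a diagonal neither alters the contents of filled cells nor the set of letters that occur, so it has no effect on $e_k, f_k, \wt, \varepsilon_k, \varphi_k$; any choice of preimage yields the same local picture around $\mathbf{h}$, making the transfer of axioms well-defined.
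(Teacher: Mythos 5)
Your proposal is correct and follows essentially the same route as the paper: transfer the Stembridge structure on $\SVT^m(\lambda/\mu)$ (Theorem~\ref{theorem.SVT crystal}) to $\mathcal{H}^{m,\star}$ via the weight-preserving, invertible residue map and the intertwining statement of Theorem~\ref{thm: star}. The extra care you take with the signature data and the diagonal-shift ambiguity is sound but not beyond what the paper's own two-line argument already relies on.
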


\begin{proof}
By Theorem~\ref{thm: star} and the fact that the residue map preserves the weight and is invertible, this follows from the fact
that $\SVT^m(\lambda/\mu)$ is a Stembridge crystal proven in~\cite[Theorem 3.9]{MPS.2018}
(see also Theorem~\ref{theorem.SVT crystal}).
\end{proof}

\begin{example}
Consider the tableau $T$ (with labels in red) given by
\[
\ytableausetup{notabloids,boxsize=2em}
T = 
\begin{ytableau}
3_{\color{red}{1}}\\
1_{\color{red}{2}} & 123_{\color{red}{3}}
\end{ytableau}\,,
\]
with $\res(T)=(31)(3)(32)$.

For the crystal operators on set-valued tableaux we obtain
\[f_1(T) = 
\begin{ytableau}
3_{\color{red}{1}}\\
12_{\color{red}{2}} & 23_{\color{red}{3}}
\end{ytableau}\,,
\]
with $\res\left(f_1(T)\right)=(31)(32)(2)$. Then it can be easily checked that the following diagram commutes:
\[
\ytableausetup{mathmode,boxsize=2em}
\begin{tikzpicture}
\node (T) at (0,0) 
{$T = \begin{ytableau}
3_{\color{red}{1}} \\ 
1_{\color{red}{2}} & 123_{\color{red}{3}}
\end{ytableau}$};
\node (h) at (5,0) {$(31)(3)(32)$};
\node (T') at (0,-3) 
{$f_1(T) = \begin{ytableau}
3_{\color{red}{1}} \\
12_{\color{red}{2}} & 23_{\color{red}{3}}
\end{ytableau}$};
\node (h') at (5,-3) {$(31)(32)(2)$.};
\draw [->] (T) edgenode[above]{$\res$} (h);
\draw [->] (T') edgenode[above]{$\res$} (h');
\draw [->] (T) edgenode[right]{$f_1$} (T');
\draw [->] (h) edgenode[right]{$f_1^\star$} (h');
\end{tikzpicture}
\]
\end{example}

\section{Insertion algorithms}
\label{section.insertion}

In this section, we discuss two insertion algorithms for decreasing factorizations in $\mathcal{H}^m$ (resp. 
$\mathcal{H}^{m,\star}$).
The first is the Hecke insertion introduced by Buch et al.~\cite{BKSRY.2008}, which we review in
Section~\ref{section.hecke insertion}. We prove a relationship between Hecke insertion and the residue map 
(see Theorem~\ref{thm: res}). In particular, this proves~\cite[Open Problem 5.8]{MPS.2018} for fully-commutative
permutations. 
The second insertion is a new insertion, which we call $\star$-insertion, introduced in
Section~\ref{section.star insertion}. It goes from fully-commutative decreasing factorizations in the 0-Hecke monoid to pairs of 
(transposes of) semistandard tableaux of the same shape and is well-behaved with respect to the crystal operators.

\subsection{Hecke insertion}
\label{section.hecke insertion}

\ytableausetup{boxsize=1.5em}
Hecke insertion was first introduced in~\cite{BKSRY.2008} as column insertion. Here we state the row insertion version 
as in \cite{PP.2016}. In this section, we represent a decreasing factorization $\mathbf{h} = h^m h^{m-1} \cdots h^1$,
where $h^i = h^i_1 h^i_2 \ldots h^i_{\ell_i}$, by a \defn{decreasing Hecke biword}
\[
\begin{bmatrix}
\mathbf{k}\\ 
\mathbf{h}
\end{bmatrix}
=
\begin{bmatrix}
m & \ldots & m & \ldots & 1 & \ldots &1 \\
h^m_1 & \ldots & h^m_{\ell_m} & \ldots & h^1_1 & \ldots & h^1_{\ell_1}
\end{bmatrix}.
\]
In addition, we say that $[\mathbf{k},\mathbf{h}]^t$ is \defn{fully-commutative} if $\mathbf{h}$ 
is fully-commutative. 

\begin{example}\label{example.biword}
Consider the decreasing factorization $\mathbf{h}=(1)(2)(31)(\;)(32)$. Then the corresponding biword 
$[\mathbf{k},\mathbf{h}]^t$ is
\[
\begin{bmatrix}
\mathbf{k}\\ 
\mathbf{h}
\end{bmatrix}
=
\begin{bmatrix}
5 & 4 & 3 & 3 & 1 & 1 \\
1 & 2 & 3 & 1 & 3 & 2
\end{bmatrix}.
\]
\end{example}

\begin{definition}
\label{definition.hecke insertion}
Starting with a decreasing Hecke biword $[\mathbf{k},\mathbf{h}]^t$, we define \defn{Hecke row insertion} from the right. 
The insertion sequence is read from right to left. Suppose there are $n$ columns in $[\mathbf{k},\mathbf{h}]^t$.

Start the insertion with $(P_0,Q_0)$ being both empty tableaux. We recursively construct $(P_{i+1},Q_{i+1})$ from 
$(P_i,Q_i)$. Suppose the $(n-i)$-th column in $[\mathbf{k},\mathbf{h}]^t$ is $[y,x]^t$.

We describe how to insert $x$ into $P_i$, denoted  $P_i \leftarrow x$, by describing how to insert $x$ into a row $R$. 
The insertion may modify the row and may produce an output integer, which will be inserted into the next row. First, 
we insert $x$ into the first row $R$ of $P_i$ following the rules below:
\begin{enumerate}
	\item If $x\geqslant z$ for all $z\in R$, the insertion terminates in either of the following ways:
		\begin{enumerate}
			\item If we can append $x$ to the right of $R$ and obtain an increasing tableau, the result $P_{i+1}$ 
			is obtained by doing so; form $Q_{i+1}$ by adding a box with $y$ in the same position where $x$ is 
			added to $P_i$.
			\item Otherwise row $R$ remains unchanged. Form $Q_{i+1}$ by adding $y$ to the existing corner of 
			$Q_i$ whose column contains the rightmost box of row $R$.
		\end{enumerate}
	\item Otherwise, there exists a smallest $z$ in $R$ such that $z>x$.
		\begin{enumerate} 
			\item If replacing $z$ with $x$ results in an increasing tableau, then do so. Let $z$ be the output integer 
			to be inserted into the next row.
			\item Otherwise, row $R$ remains unchanged. Let $z$ be the output integer to be inserted into the next row.
		\end{enumerate}
\end{enumerate}
The entire Hecke insertion terminates  at $(P_n,Q_n)$ after we have inserted every letter from the Hecke biword. 
The resulting insertion tableau $P_n$ is an increasing tableau, meaning that both rows and columns of $P_n$
are strictly increasing.
If $\mathbf{k}=(n,n-1,\dots,1)$, the recording tableau $Q_n$ is a standard set-valued tableau.
\end{definition}

\begin{example}\label{eg: hecke_insertion}
Take $[\mathbf{k},\mathbf{h}]^t$ from Example~\ref{example.biword}. Following the Hecke row insertion, we compute 
its insertion tableau and recording tableau:
\[
\begin{aligned}
\emptyset
& \rightarrow &
\ytableausetup{notabloids}
\raisebox{-0.5cm}{
\begin{ytableau}
2
\end{ytableau}}
& \rightarrow &
\ytableausetup{notabloids}
\raisebox{-0.5cm}{
\begin{ytableau}
2 & 3
\end{ytableau}}
& \rightarrow &
\ytableausetup{notabloids}
\begin{ytableau}
2& \none \\
1 &3
\end{ytableau}
& \rightarrow &
\ytableausetup{notabloids}
\begin{ytableau}
2 & \none \\
1 & 3
\end{ytableau}
& \rightarrow &
\ytableausetup{notabloids}
\begin{ytableau}
2 & 3 \\
1 & 2
\end{ytableau}
& \rightarrow &
\ytableausetup{notabloids}
\begin{ytableau}
3 \\
2 & 3 \\
1 & 2
\end{ytableau} = P,\\
\emptyset
& \rightarrow &
\ytableausetup{notabloids}
\raisebox{-0.5cm}{
\begin{ytableau}
1
\end{ytableau}}
& \rightarrow &
\ytableausetup{notabloids}
\raisebox{-0.5cm}{
\begin{ytableau}
1 & 1
\end{ytableau}}
& \rightarrow &
\ytableausetup{notabloids}
\begin{ytableau}
3 & \none \\
1 & 1
\end{ytableau}
& \rightarrow &
\ytableausetup{notabloids}
\begin{ytableau}
3 & \none \\
1 & 13
\end{ytableau}
& \rightarrow &
\ytableausetup{notabloids}
\begin{ytableau}
3 & 4 \\
1 & 13
\end{ytableau}
& \rightarrow &
\ytableausetup{notabloids}
\begin{ytableau}
5\\
3 & 4 \\
1 & 13
\end{ytableau} = Q.
\end{aligned}
\]
\end{example}

\begin{example}
\label{example.destandardize}
Note that the recording tableau for the Hecke insertion of Definition~\ref{definition.hecke insertion} is not
always a semistandard set-valued tableau. For example, for $\mathbf{h}=(21)(41)$ we have
\[
\begin{bmatrix}
\mathbf{k}\\ 
\mathbf{h}
\end{bmatrix}
=
\begin{bmatrix}
2 & 2 & 1 & 1 \\
2 & 1 & 4 & 1 
\end{bmatrix}
\]
and
\[
	P= \begin{ytableau} 4\\ 1&2 \end{ytableau} \qquad \text{and} \qquad
	Q= \begin{ytableau} 22\\ 1&1 \end{ytableau}.
\]
However, in Theorem~\ref{thm: res} below we will see that in certain cases it is.
\end{example}

\begin{theorem}\label{thm: res}
Let $T \in \SVT(\lambda)$ and $[\mathbf{k},\mathbf{h}]^t= \res(T)$. 
Apply Hecke row insertion from the right on $[\mathbf{k},\mathbf{h}]^t$ to obtain the pair of tableaux $(P,Q)$. 
Then $Q = T$.
\end{theorem}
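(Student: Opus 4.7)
The plan is to prove this by induction on the total number of letters in $\mathbf{h}$, equivalently the number of entries in $T$ counted with multiplicity. The base case $T=\emptyset$ is immediate. For the inductive step I strip off the leftmost column $\bigl[m,c^*\bigr]^t$ of the biword $\res(T)$, where $m=\max\mathbf{k}$ is the maximum label and $c^*$ is the first (hence largest) letter of the factor $h^m$. This column corresponds to a unique cell $C^*\in\lambda$ containing the entry $m$ with content $c^*$.

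I will first show that $C^*$ is an outer corner of $\lambda$. Since $m$ is the maximum label, semistandardness forbids any cell of $\lambda$ to lie above $C^*$ in its column (such a cell would require an entry strictly greater than $m$). Since $c^*$ is the maximum content among cells of $T$ containing $m$, no cell of $\lambda$ immediately to the right of $C^*$ in its row can contain $m$; but by semistandardness any such cell would have its minimum at least $m$ and hence would contain $m$, so no such cell exists in $\lambda$. Consequently, removing $m$ from $C^*$ yields a valid semistandard set-valued tableau $T'$ of shape $\lambda$ if $C^*$ had other entries in $T$, and of shape $\lambda\setminus\{C^*\}$ if $C^*=\{m\}$ was a singleton. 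In either case $\res(T')$ coincides with $\mathbf{h}':=\mathbf{h}$ with its leftmost column removed, up to a possible uniform shift in all contents when the number of parts of the shape decreases (which can happen only by one, and only in the singleton sub-case). Such a uniform shift preserves every inequality used by Hecke row insertion and leaves the recording tableau invariant, so the inductive hypothesis applied to $T'$ yields that Hecke insertion of $\mathbf{h}'$ produces $(P',T')$ for some $P'$.

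It remains to analyze the final step of Hecke insertion on $\mathbf{h}$, in which $c^*$ is inserted into $P'$ with label $m$. I claim that this insertion lands precisely at $C^*$: if $C^*=\{m\}$ was a singleton in $T$, the insertion creates a new box in $P'$ at position $C^*$ (case 1(a) of row insertion), placing $m$ in a new box of $Q$ at $C^*$; otherwise the insertion terminates without adding a new box (case 1(b) or 2(b)), and $m$ is appended to the existing corner of $T'$ at $C^*$. In either case $Q=T$.

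The main obstacle is verifying this last step, which requires a careful row-by-row trace of the Hecke row insertion of $c^*$ into $P'$ and the identification of the terminating row with the row of $C^*$ in $\lambda$. To control this I plan an auxiliary structural lemma describing the insertion tableau of the residue of a straight-shape semistandard set-valued tableau: roughly, that the rightmost entries along the rows of $P'$ encode the outer profile of $T'$, so that inserting the maximal letter $c^*$ with the maximal label $m$ is forced to traverse rows in the manner dictated by the position of $C^*$ in $\lambda$. The case distinction between a new box and an existing corner then reduces to whether $C^*$ is a new cell in $T$ compared with $T'$, which was established in the second paragraph.
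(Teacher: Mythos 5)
Your reduction to the last inserted letter is set up correctly: the cell $C^*$ carrying the maximal label $m$ at maximal content is indeed a removable corner of $\lambda$, the residue of $T'$ is the truncated biword up to a harmless uniform content shift, and the whole theorem does come down to showing that the final insertion of $c^*$ lands (as a new box or as a recorded corner) at $C^*$. This is essentially a backward version of the paper's argument, which instead runs the induction forward, adding one entry of $T$ at a time in insertion order and carrying along a strengthened hypothesis: the insertion tableau of any subtableau $T'$ of shape $\mu$ has the explicit entries $P(T')(i,j)=\ell+j-\mu'_j+i-1$. That explicit formula is what makes the row-by-row trace of each insertion path possible.

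The gap is that you never prove the step on which everything hinges. You write that you ``plan an auxiliary structural lemma'' to the effect that ``the rightmost entries along the rows of $P'$ encode the outer profile of $T'$,'' but (i) that lemma is not stated precisely, let alone proved, and (ii) as described it is too weak. To trace the insertion of $c^*=\ell+q-p$ into $P'$ you need the entries of two full adjacent columns of $P'$ (columns $q$ and $q+1$ in the multicell case, $q-1$ and $q$ in the singleton case), not just the row-ends; and the multicell case is genuinely delicate: the bumping path does not go anywhere near $C^*=(p,q)$ --- it runs vertically up column $q+1$ and terminates by Hecke rule 1(b) in row $\mu'_{q+1}+1$, and the label $m$ ends up recorded at $(p,q)$ only because of the special recording convention ``add to the existing corner whose column contains the rightmost box of the terminating row.'' (Your parenthetical ``case 1(b) or 2(b)'' is also off: case 2(b) does not terminate the insertion.) So the auxiliary lemma you defer is not a technical footnote; it is the entire content of the proof, and your backward induction cannot close without carrying the explicit description of $P'$ as part of the inductive hypothesis, exactly as the paper does.
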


\begin{remark}
Combining Theorems~\ref{thm: res} and ~\ref{thm: star} shows that Hecke insertion from right to left (as opposed to 
left to right in \cite{PP.2016})
intertwines the crystal on set-valued tableaux and the $\star$-crystal, even though in general it is not always well-defined 
(see Example~\ref{example.destandardize}).
This resolves~\cite[Open Problem 5.8]{MPS.2018} when the decreasing factorizations are fully-commutative.
Even when $\mathbf{h}$ is fully-commutative, but does not correspond to a straight-shaped tableau under $\mathsf{res}^{-1}$
as in Example~\ref{example.destandardize}, one can fill the skew part with small enough numbers and apply the Hecke
insertion on this tableau. In the above example
\[
\begin{bmatrix}
\mathbf{k}\\ 
\mathbf{h}
\end{bmatrix}
=
\begin{bmatrix}
2 & 2 & 1 & 1 & 0 & 0\\
2 & 1 & 4 & 1 & 3 & 2
\end{bmatrix}
\qquad \text{with} \qquad 
	Q = T =  \begin{ytableau} 12 & 2\\ 0&0&1 \end{ytableau}.
\]
Note, however, that unlike in~\cite{MPS.2018} we use row Hecke insertion from right to left rather than column
insertion from left to right (in analogy to~\cite{MorseSchilling.2016} for Edelman--Greene insertion).
\end{remark}

Since $k\in T(i,j)$ if and only if $\ell+j-i \in h^k$ under the residue map, where $\ell=\ell(\lambda)$ and $h^k$ is the 
$k$-th factor of $\mathbf{h}$, the statement of Theorem~\ref{thm: res} is equivalent to applying Hecke insertion on the 
entries of $T$ sorted first by ascending order of entries, followed by ascending diagonal content.

\begin{example}
Let $T$ be the semistandard set-valued tableau
\[
\ytableausetup{notabloids}
T = 
\begin{ytableau}
2_{\color{red}{1}} & 4_{\color{red}{2}}\\
1_{\color{red}{2}} & 23_{\color{red}{3}}
\end{ytableau}\;.
\]	
The insertion sequence by entry is listed in the table below:
\begin{table}[h!]
	\begin{tabular}{lccccc}
		Cell    & (1,1) & (2,1) & (1,2) & (1,2) & (2,2) \\
		Content & 2     & 1     & 3     & 3     & 2  \\
		Entry & 1     & 2     & 2     & 3     & 4      
	\end{tabular}
\end{table}

We will prove Theorem~\ref{thm: res} by induction by considering all subtableaux of $T$, obtained by adding
the entries in $T$ one by one in the order above:
\[\begin{aligned}
\emptyset 
& \rightarrow &
\raisebox{-0.5cm}{
\ytableausetup{notabloids}
\begin{ytableau}
1_{\color{red}{2}}
\end{ytableau}}
& \rightarrow &
\ytableausetup{notabloids}
\begin{ytableau}
2_{\color{red}{1}} \\
1_{\color{red}{2}} 
\end{ytableau}
\ytableausetup{notabloids}
& \rightarrow &
\begin{ytableau}
2_{\color{red}{1}} & \none \\
1_{\color{red}{2}} & 2_{\color{red}{3}}
\end{ytableau}
& \rightarrow &
\ytableausetup{notabloids}
\begin{ytableau}
2_{\color{red}{1}} & \none \\
1_{\color{red}{2}} & 23_{\color{red}{3}}
\end{ytableau}
& \rightarrow &
\ytableausetup{notabloids}
\begin{ytableau}
2_{\color{red}{1}} & 4_{\color{red}{2}} \\
1_{\color{red}{2}} & 23_{\color{red}{3}}
\end{ytableau} = T.
\end{aligned}
\]	
In addition, the corresponding sequence of insertion tableaux and recording tableaux is listed here:
\[
\begin{aligned}
\emptyset
& \rightarrow &
\ytableausetup{notabloids}
\raisebox{-0.5cm}{
\begin{ytableau}
2
\end{ytableau}}
& \rightarrow &
\ytableausetup{notabloids}
\begin{ytableau}
2 \\
1
\end{ytableau}
& \rightarrow &
\ytableausetup{notabloids}
\begin{ytableau}
2 & \none \\
1 & 3
\end{ytableau}
& \rightarrow &
\ytableausetup{notabloids}
\begin{ytableau}
2 & \none \\
1 & 3
\end{ytableau}
& \rightarrow &
\ytableausetup{notabloids}
\begin{ytableau}
2 & 3 \\
1 & 2
\end{ytableau} = P.\\
\emptyset
& \rightarrow &
\ytableausetup{notabloids}
\raisebox{-0.5cm}{
\begin{ytableau}
1
\end{ytableau}}
& \rightarrow &
\ytableausetup{notabloids}
\begin{ytableau}
2 \\
1
\end{ytableau}
& \rightarrow &
\ytableausetup{notabloids}
\begin{ytableau}
2 & \none \\
1 & 2
\end{ytableau}
& \rightarrow &
\ytableausetup{notabloids}
\begin{ytableau}
2 & \none \\
1 & 23
\end{ytableau}
& \rightarrow &
\ytableausetup{notabloids}
\begin{ytableau}
2 & 4 \\
1 & 23
\end{ytableau} = Q.
\end{aligned}
\]	
\end{example}

\begin{proof}[Proof of Theorem \ref{thm: res}]
We prove the theorem by proving the following more specific statement.

For a given step in the insertion process, suppose that the entries of $T$ that are involved so far form a nonempty 
subtableau $T'$ of $T$ with shape $\mu$ containing cell $(1,1)$, and the insertion tableau and recording tableau at the 
corresponding step are $P(T')$ and $Q(T')$. Then, they both have shape $\mu$, and the entry of cell $(i,j)$ of $P(T')$ is $\ell+j-\mu'_j+i-1$, and  
$Q(T') = T'$, where $\mu'$ is the transpose of the partition $\mu$ and $\ell:= \lambda'_1=\ell(\lambda)$.

We prove this by induction on subtableaux of $T$. 

\vspace{2pt}

\noindent
\textbf{Base step:} Suppose $T'$ only contains a single cell $(1,1)$ and $T'(1,1)=S$, where $S$ is a subset of $T(1,1)$ 
with cardinality $d$. Then $P(T')$ is obtained by inserting $d$ times the number $\ell$. So we have 
$P(T') = \begin{ytableau}\ell\end{ytableau}$ and $Q(T') = T'$. Here $\mu = (1)$, so for $(i,j)=(1,1)$, we have 
$\ell +j-\mu'_j+i-1= \ell$. 

\vspace{2pt}

\noindent
\textbf{Inductive step:} Suppose that the statements hold for some subtableau $T'$ of shape $\mu$. Assume the next 
insertion step involves adding the entry $k$ in cell $(p,q)$ of $T$ to $T'$ to obtain $T''$. There are two cases: (1) the 
cell $(p,q)$ is already in $T'$, or (2) the cell $(p,q)$ is not in $T'$.

\vspace{2pt}

\noindent
\textbf{Case (1):} We must have $(p,q)$ to be an inner corner of $T'$ (no cell is to its right or above it), so $p=\mu'_q$ 
and $p> \mu'_{q+1}$. In this case, $k$ is recorded in $Q(T')$. Then by the induction on $T'$, every cell $(i,j)$ of $P(T')$ 
has value $\ell+j-\mu'_j+i-1$. To determine the insertion path of $P(T') \leftarrow \ell+q-p$, we compute the columns $q$ 
and $q+1$ of $P(T')$ as follows:
 
 \smallskip
 
\begin{center}
	\begin{tabular}{ccc}
		row number                          & $q$-th column                        & $(q+1)$-st column                           \\ \cline{2-2} 
		\multicolumn{1}{c|}{p}              & \multicolumn{1}{c|}{$\ell+q-1$}     &                                            \\ \cline{2-2}
		\multicolumn{1}{c|}{}               & \multicolumn{1}{c|}{$\vdots$}            &                                            \\ \cline{2-3}
		\multicolumn{1}{c|}{$\mu'_{q+1}<p$} & \multicolumn{1}{c|}{$\ell+q-p+\mu'_{q+1}-1$} & 
		\multicolumn{1}{c|}{$\ell+q$}   \\ \cline{2-3} 
		\multicolumn{1}{c|}{}               & \multicolumn{1}{c|}{$\vdots$}            & \multicolumn{1}{c|}{$\vdots$}                   \\
		 \cline{2-3} 
		\multicolumn{1}{c|}{2}              & \multicolumn{1}{c|}{$\ell+q-p+1$}   & \multicolumn{1}{c|}{$\ell+q+2-\mu'_{q+1}$} \\
		 \cline{2-3} 
		 \multicolumn{1}{c|}{1}              & \multicolumn{1}{c|}{$\ell+q-p$}     & \multicolumn{1}{c|}{$\ell+q+1-\mu'_{q+1}$} \\ 
		\cline{2-3} 
	\end{tabular}
\end{center}

Following Case 2(b) of Hecke insertion, the insertion path is vertically up column $q+1$. At the top of the column, 
$\ell+q$ is inserted into row $\mu'_{q+1}+1$. Furthermore, $\ell+q$ is greater than $\ell+q-p+\mu'_{q+1}$ in cell 
$(\mu'_{q+1}+1,q)$ because $p > \mu'_{q+1}$. By Hecke insertion Case 1(b), the insertion ends in row $\mu'_{q+1}+1$. 
Also $P(T')$ is unchanged, and $k$ is recorded in cell $(p,q)$ of $Q(T')$ since it is the corner whose column contains the 
rightmost box of row $\mu'_{q+1}+1$. In this case, we get $Q(T'')=T''$. Since the shape $\mu$ is unchanged, we have 
that $P(T'')=P(T')$ also satisfies the statement. 
\vspace{2pt}

\noindent
\textbf{Case (2):} If cell $(p,q)$ is not in $T'$, then it must be an outer corner of $T'$, so $\mu'_q=p-1$ and 
$\mu'_{q-1}>p-1$. Specifically, two cases can happen:
(a) $p=1$ and $(1,q-1)\in T'$,
(b) both $(p-1,q),(p,q-1) \in T'$, or $q = 1$ and $(p-1,1)\in T$.

\smallskip

\noindent
\textbf{Case 2(a):} The first row of $P(T')$ is $\ell+1-\mu'_1, \dots, \ell+j-\mu'_j,\dots,\ell+(q-1)-\mu'_{q-1}$. Since 
$\ell+q-p = \ell+q-1 > \ell+(q-1)-\mu'_{q-1}$, it is appended to the end of the first row which is the cell $(1,q)$. The letter 
$k$ is recorded in the same new cell of $Q(T')$. In this case, the only entry in $P$ that is changed is $(1,q)$, and its 
entry $\ell+q-1$ satisfies the statement. Also $Q(T'')$ equals $T''$. 
\smallskip 

\noindent
\textbf{Case 2(b):} Since entry $(i,q-1)$ of $P(T')$ is $\ell+q-1-\mu'_{q-1}+i-1$ and entry $(i,q)$ of $P(T')$ is 
$\ell+q-\mu'_q+i-1$, the number $q-p+\ell$ is in-between the two when $i=1$. So the insertion starts by bumping 
$(1,q)$. To get the insertion path, we compute columns $q-1$ and $q$ as follows:

\smallskip

\begin{center}
	\begin{tabular}{ccc}
		row number                        & $(q-1)$-st column                               & $q$-th column                      \\ \cline{2-2} 
		\multicolumn{1}{c|}{$\mu'_{q-1}$} & \multicolumn{1}{c|}{$\ell+q-2$}              &                                   \\ \cline{2-2}
		\multicolumn{1}{c|}{}             & \multicolumn{1}{c|}{...}                     &                                   \\ \cline{2-3}
		\multicolumn{1}{c|}{$p-1$}          & \multicolumn{1}{c|}{$\ell+q+p-\mu'_{q-1}-3$} & \multicolumn{1}{c|}{$\ell+q-1$}  
		\\ \cline{2-3} 
		\multicolumn{1}{c|}{}             & \multicolumn{1}{c|}{...}                     & \multicolumn{1}{c|}{...}          \\ \cline{2-3} 
		\multicolumn{1}{c|}{2}            & \multicolumn{1}{c|}{$\ell+q-\mu'_{q-1}$}     & \multicolumn{1}{c|}{$\ell+q-p+2$}   \\ 
		\cline{2-3} 
		\multicolumn{1}{c|}{1}            & \multicolumn{1}{c|}{$\ell+q-1-\mu'_{q-1}$}   & \multicolumn{1}{c|}{$\ell+q-p+1$} \\ 
		\cline{2-3} 
	\end{tabular}
\end{center}

By Hecke insertion Case 2(a), $\ell+q-p$ is placed in cell $(1,q)$ and the original column $q$ is shifted one position 
higher. By Hecke insertion Case 1(a), the insertion terminates at row $p$ and the original entry in cell $(p-1,q)$ is 
appended at the rightmost box of row $p$. Thus, $\mu'_{q}$ increases by 1. The updated entries in column $q$ still
satisfy the statement. Since the entries in other columns of $P(T')$ are unchanged and $\mu'_j$ is unchanged for 
$j\neq q$, they also satisfy the statement. So we have $P(T'')$ satisfies the statement. The letter $k$ is inserted 
into the new cell $(p,q)$ of $Q(T')$, which makes $Q(T'')=T''$.

Thus, the statement holds, proving the theorem.
\end{proof}

\subsection{The $\star$-insertion}
\label{section.star insertion}

We define a new insertion algorithm, which we call $\star$-insertion, from fully-commutative decreasing Hecke biwords
$[\mathbf{k},\mathbf{h}]^t$ to pairs of tableaux $P$ and $Q$, denoted by $\star([\mathbf{k},\mathbf{h}]^t)=(P,Q)$, 
as follows.

\begin{definition}
\label{def: new_insertion}
Fix a fully-commutative decreasing Hecke biword $[\mathbf{k},\mathbf{h}]^t$. The insertion is done by reading the columns of
this biword from right to left.

Begin with $(P_0, Q_0)$ being a pair of empty tableaux.
For every integer $i\geqslant 0$, we recursively construct $(P_{i+1}, Q_{i+1})$ from $(P_i, Q_i)$ as follows.
Let $[q,x]^t$ be the $i$-th column (from the right) of $[\mathbf{k},\mathbf{h}]^t$.
Suppose that we are inserting $x$ into row $R$ of $P_i$.
\begin{description}
	\item [Case 1] If $R$ is empty or $x>\max(R)$, then form $P_{i+1}$ by appending $x$ to row $R$ and form 
	$Q_{i+1}$ by adding $q$ in the corresponding position to $Q_i$. Terminate and return $(P_{i+1},Q_{i+1})$.
	\item [Case 2] Otherwise, if $x\notin R$, locate the smallest $y$ in $R$ with $y>x$. Bump $y$ with $x$ and insert 
	$y$ into the next row of $P_i$.
	\item [Case 3] Otherwise, if $x\in R$, locate the smallest $y$ in $R$ with $y\leqslant x$ and interval $[y,x]$
	contained in $R$. Row $R$ remains unchanged and $y$ is to be inserted into the next row of $P_i$.
\end{description}
Denote $(P,Q)=( P_\ell,Q_\ell)$ if $[\mathbf{k},\mathbf{h}]^t$ has length $\ell$. We define the \defn{$\star$-insertion}
by $\star([\mathbf{k},\mathbf{h}]^t)=(P,Q)$.

Furthermore, denote by $P\leftarrow x$ the tableau obtained by inserting $x$ into $P$.
The collection of all cells in $P\leftarrow x$, where insertion or bumping has occurred is called the \defn{insertion path} 
for $P\leftarrow x$. In particular, in Case 1 the newly added cell is in the insertion path, in Case 2 the cell containing the
bumped letter $y$ is in the insertion path, and in Case 3 the cell containing the same entry as the inserted letter
is in the insertion path.
\end{definition}

\begin{example}
Let
\setcounter{MaxMatrixCols}{15}
\[
\begin{bmatrix}
\mathbf{k}\\
\mathbf{h}
\end{bmatrix} =
\begin{bmatrix}
4 & 4  & 2 & 2 & 1 & 1\\
4 & 2 & 4 & 2 & 3 & 1
\end{bmatrix}.
\]

The corresponding sequence of insertion tableaux and recording tableaux under the $\star$-insertion is listed here:

\[
\begin{aligned}
\emptyset
& \rightarrow &
\ytableausetup{notabloids}
\raisebox{-0.5cm}{
	\begin{ytableau}
	*(yellow)1
	\end{ytableau}}
& \rightarrow &
\ytableausetup{notabloids}
\raisebox{-0.5cm}{
\begin{ytableau}
1 & *(yellow)3
\end{ytableau}}
& \rightarrow &
\ytableausetup{notabloids}
\begin{ytableau}
*(yellow)3 & \none \\
1 & *(yellow)2
\end{ytableau}
& \rightarrow &
\ytableausetup{notabloids}
\begin{ytableau}
3 & \none \\
1 & 2 & *(yellow)4
\end{ytableau}
& \rightarrow &
\ytableausetup{notabloids}
\begin{ytableau}
*(yellow)3 \\
*(yellow)1 \\
1 & *(yellow)2 & 4
\end{ytableau} 
& \rightarrow &
\ytableausetup{notabloids}
\begin{ytableau}
3 \\
1 & *(yellow)4\\
1 & 2 & *(yellow)4
\end{ytableau}= P.\\
\emptyset
& \rightarrow &
\ytableausetup{notabloids}
\raisebox{-0.5cm}{
	\begin{ytableau}
	1
	\end{ytableau}}
& \rightarrow &
\ytableausetup{notabloids}
\raisebox{-0.5cm}{
\begin{ytableau}
1 & 1
\end{ytableau}}
& \rightarrow &
\ytableausetup{notabloids}
\begin{ytableau}
2 & \none \\
1 & 1
\end{ytableau}
& \rightarrow &
\ytableausetup{notabloids}
\begin{ytableau}
2 & \none \\
1 & 1 & 2
\end{ytableau}
& \rightarrow &
\ytableausetup{notabloids}
\begin{ytableau}
4\\
2\\
1 & 1 & 2
\end{ytableau}
& \rightarrow &
\ytableausetup{notabloids}
\begin{ytableau}
4\\
2 & 4\\
1 & 1 & 2
\end{ytableau} = Q.
\end{aligned}
\]	
Then we have $\star([\mathbf{k},\mathbf{h}]^t)=(P,Q)$, and the cells in the insertion paths at each step are highlighted in yellow.
\end{example}

\begin{lemma}\label{lem: star.insertion}
Let $[\mathbf{k},\mathbf{h}]^t$ be a fully-commutative decreasing Hecke biword.
Suppose that $\star([\mathbf{k},\mathbf{h}]^t)=(P,Q)$. 
Then, the following statements hold:
\begin{enumerate}
\item $P^t$ is semistandard and $Q$ has the same shape as $P$.
\label{lem: P^t.semistd.P.Q.same.shape}
\item  Let $x$ be an integer such that 
$x\cdot\mathbf{h}$ is fully-commutative. Then the insertion path for $P\leftarrow x$ goes weakly to the left. 
\label{lem: insertion.path}
\end{enumerate}
\end{lemma}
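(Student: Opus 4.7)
The plan is to prove both parts together by induction on the length of $[\mathbf{k},\mathbf{h}]^t$. The base case is immediate. For the inductive step, assume $\star$ applied to a biword of length $\ell-1$ yields $(P,Q)$ satisfying part (1), and insert the leftmost remaining column $[q,x]^t$ to produce $(P',Q')$; the full biword being fully-commutative ensures the hypothesis of part (2) holds at this intermediate stage.

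For part (2), I would induct on the row index $R$ during a single insertion $P\leftarrow x$, proving the refined claim: if row $R$ is modified at column $c$, then row $R+1$, when touched, is modified at some column $c'\leqslant c$. The structural input is that $P^t$ is currently semistandard, so columns of $P$ are weakly increasing from bottom to top. Let $v$ be the value passed from row $R$ to $R+1$, and let $z$ be the entry at $(R+1,c)$ of $P$ when it exists. A direct case check shows that $v$ is at most the entry at $(R,c)$ of $P$ in each of Cases 1, 2, 3, and hence $v\leqslant z$ whenever $z$ exists. Consequently in row $R+1$ either Case 1 appends at a column $\leqslant c$, Case 2 bumps the smallest element greater than $v$ at a column $\leqslant c$, or Case 3 matches $v$ at a cell in a column $\leqslant c$, and the path proceeds weakly to the left.

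Part (1) for $P'$ is obtained in tandem. Row-strictness in each modified row is immediate from the case definitions, so the delicate check is column weak-increase for $P'$. The critical case is a Case 2 step at row $R$ with column $c$: after the bumped entry $y$ at $(R,c)$ is replaced by the smaller value $v$, one must verify that the entry $w$ at $(R-1,c)$, known a priori only to satisfy $w\leqslant y$, in fact satisfies $w\leqslant v$. Tracking the bumping one row down, $w\leqslant v$ is automatic when row $R-1$ was either unaffected at column $c$ (via row-strictness of row $R-1$ combined with the weakly-leftward path) or went through Case 2 at column $c$ itself. The subtle subcase is when row $R-1$ went through Case 3 with a maximal consecutive interval in $P$ straddling column $c$, and it is precisely here that I would invoke the fully-commutativity of $x\cdot\mathbf{h}$: a violation $w>v$ in this subcase would force the cells of $P$ forming the Case 3 interval in row $R-1$ together with $y$ at $(R,c)$ to contain a braid pattern in the column reading word, contradicting FC.

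The main obstacle is making this final translation precise. Concretely, one must argue that the column reading word of $P$ augmented by $x$ is Hecke-equivalent to $x\cdot\mathbf{h}$, so that a braid pattern in the former yields, after Hecke relations, a braid pattern in $x\cdot\mathbf{h}$. This reading-word invariant is standard in spirit but requires a careful case-by-case verification that each single $\star$-insertion preserves the Hecke equivalence class of the column reading word of the evolving $P$; I would establish this invariant in parallel with the main induction, with the crucial verification occurring at the same Case 2 and Case 3 interactions that govern the column-weak-increase check.
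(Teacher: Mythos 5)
Your skeleton matches the paper's: induct on the letters inserted, analyze a single insertion $P\leftarrow x$ row by row, and drive both the semistandardness of $P^t$ and the weakly-leftward path from the single inequality ``the entry directly below the cell touched in row $R+1$ is at most the value passed up from row $R$.'' Your part (2) is essentially the paper's argument verbatim. The genuine gap is in the one place you flag as the main obstacle: to handle the column check when row $R-1$ underwent Case 3, you propose to derive a braid pattern and contradict full-commutativity, which forces you to establish, in parallel with the induction, that the (column) reading word of the evolving $P$ stays in the Hecke class of the inserted prefix. That invariant is nontrivial --- the paper proves the analogous statement for the \emph{row} reading word as a separate, later lemma whose proof is as long as this one --- and you have not supplied it, so as written the proof does not close. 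Worse, the detour is unnecessary: the paper's proof of this lemma never invokes full-commutativity at all; the Case 3 subcase yields to exactly the same minimality-plus-column-monotonicity argument as the Case 2 subcase.

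Concretely: suppose Case 3 at row $R_{j-1}$ passes up $u$, the smallest entry with $[u,u']\subseteq R_{j-1}$, so $[u,u']$ occupies consecutive columns $c_1<\cdots<c_0$ of $R_{j-1}$ with $P(R_{j-1},c_1)=u$. If Case 2 then occurs at row $R_j$ in column $c$ (bumping $y$, the smallest entry of $R_j$ exceeding $u$), one only needs $c\leqslant c_1$, for then $w=P(R_{j-1},c)\leqslant P(R_{j-1},c_1)=u$ as required. And indeed $c\leqslant c_1$: otherwise the cell $(R_j,c_1)$ exists, its entry is at least $u$ by the column condition on the current $P$ and strictly greater than $u$ since $u\notin R_j$ in Case 2, so by minimality $y\leqslant P(R_j,c_1)<P(R_j,c)=y$, a contradiction. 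The same comparison handles the terminal Case 1 append (there $u>\max(R_j)$ forces the appended column to be at most $c_1$), and Case 3 at row $R_j$ changes nothing. So no braid argument, no reading-word invariant, and no use of the hypothesis that $x\cdot\mathbf{h}$ is fully-commutative is needed here; replacing your proposed contradiction with this direct inequality completes the proof along the paper's lines.
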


\begin{proof}
We will prove \eqref{lem: P^t.semistd.P.Q.same.shape} by induction on the number of cells of $P$. 
Statement~\eqref{lem: insertion.path} will follow by some results in the proof of 
statement~\eqref{lem: P^t.semistd.P.Q.same.shape}.

Consider the leftmost column $[q,x]^t$ of $[\mathbf{k},\mathbf{h}]^t$ and let $[\mathbf{k'},\mathbf{h'}]^t$ be the Hecke 
biword formed by taking the remaining columns in the same order.
If the $\star$-insertion of $[\mathbf{k'},\mathbf{h'}]^t$ yields $(P',Q')$, note that we have $P=P'\leftarrow x$. 
For all integers $j\geqslant 1$, denote by $R_j$ the (possibly empty) $j$-th row of $P'$.
Denote by $u$ the entry to be inserted into $R_j$ and $B_j$ as the cell in the insertion path at $R_j$, where 
$1\leqslant j\leqslant k$. Additionally, if bumping occurs at $R_j$, denote the entry bumped out as $y$.

\smallskip
\noindent \textbf{(1)}
We will prove that if $(P')^t$ is semistandard, then the transpose of the updated tableau is semistandard. 
\begin{description}
\item[Case (a)] Suppose that the insertion terminates at $R_1$. 
Then Case 1 of the $\star$-insertion has occurred, with a cell containing $x$ appended at the end of the row. 
If $R_1$ is nonempty, then $x>\max(R_1)$. Additionally, as $(P')^t$ is semistandard, integers strictly increase along 
$R_1$ but weakly increase along the column containing $B_1$. Hence, the transpose of the resulting tableau $P$ is 
semistandard.
\item[Case (b)] Suppose that insertion terminates at $R_k$, where $k>1$. 
We will show that for all $1\leqslant j\leqslant k$, the changes introduced at row $R_j$ of $P'$ 
maintain the property that the transpose of the updated tableau is semistandard.
\item[Case (b)(i)] Suppose that $j=k$. 
In this case, a new cell containing $u$ is appended at the end 
of $R_k$ and $u>\max(R_k)$ if the row is nonempty, proving that the integers increase strictly along $R_k$.

If Case 2 occurs at $R_{k-1}$, then $u$ is the entry bumped out of $R_{k-1}$ with the property that when $u'$ 
is inserted into $R_{k-1}$, $u\in R_{k-1}$ is the smallest entry with $u>u'$.
Let $z$ be the entry below cell $B_k$. We claim that $z\leqslant u$. 
If we assume instead that $z>u$, then the cell containing $z$ is strictly to the right of $B_{k-1}$.
However, the cell above $B_{k-1}$ has value greater than $u$ since $(P')^t$ is semistandard and $u\notin R_k$. 
This contradicts the minimality of $u'$, as $u'$ is greater than this value, hence proving the claim.

If Case 3 occurs at $R_{k-1}$, then $u$ is bumped out of $R_{k-1}$ with the property that 
when $u'$ is inserted into $R_{k-1}$, $u\in R_{k-1}$ is the smallest entry with $[u,u']\subseteq R_{k-1}$.
Let $z$ be the entry below cell $B_k$. Then, similar to the argument immediately before, $z\leqslant u'$.
Hence, we have established that the integers weakly increase along the column 
containing $B_k$ after $u$ is appended at the end of $R_k$. 

\item[Case (b)(ii)] Suppose that $1\leqslant j<k$ and Case 2 occurs at $R_j$. 
Then $y$ is the entry bumped out of $R_j$ with the property that when $u$ is inserted into $R_j$, 
$y\in R_j$ is the smallest entry with $y>u$. 
Thus, as $u\notin R_j$, for all entries $z$ and $z'$ respectively to the left and to the right of $B_j$, we have
$z<u<y<z'$.

If Case 2 occurs at $R_{j-1}$, then $u$ is bumped out of $R_{j-1}$ with the property that 
when $u'$ is inserted into $R_{j-1}$, $u\in R_{j-1}$ is the smallest entry with $u>u'$. 
Let $z$ be the entry below cell $B_j$. Then by repeating the same argument as in the first subcase of in Case (b)(i), 
we obtain $z\leqslant u$.

If Case 3 occurs at $R_{j-1}$, then $u$ was bumped out of $R_{j-1}$ with the property that 
when $u'$ is inserted into $R_{j-1}$, $u\in R_{j-1}$ is the smallest entry with $[u,u']\subseteq R_{j-1}$.
Let $z$ be the entry below cell $B_j$. Then by repeating the same argument as in the second subcase of in 
Case (b)(i), we obtain $z\leqslant u'$.

Hence, we have established that integers increase weakly along the column 
containing $B_j$ but increase strictly along $R_j$ after $u$ bumps out $y$. 
\item[Case (b)(iii)] Suppose that $1\leqslant j<k$ and Case 3 occurs at $R_j$. 
In this case, there are no changes to row $R_j$ after inserting $u$ and bumping $y$.
Hence, it is trivial that integers increase weakly along the column 
containing $B_j$ but increase strictly along $R_j$ after $u$ bumps out $y$. 
\end{description}
In all cases, we have shown that if $(P')^t$ is semistandard, then the transpose of the updated tableau remains 
semistandard. Therefore, by induction on the number of added cells, we have proved that the insertion tableau $P$ 
under $\star$-insertion satisfies the property that $P^t$ is semistandard.

Finally, note that the shape of the recording tableau is modified only when Case 1 of the $\star$-insertion has occurred. 
In this case, a cell is added to form $Q$ at the same position as the cell added to form $P$. 
Since we always begin with a pair of empty tableaux, by inducting on the number of added cells, 
the shapes of $P$ and $Q$ are the same. 

\smallskip
\noindent \textbf{(2)}
Suppose that the insertion terminates at $R_k$, where $k\geqslant 1$. 
We shall prove that $B_j$ is weakly to the left of $B_{j-1}$ for all $1<j\leqslant k$ by revisiting the cases explored in 
the proof of part \eqref{lem: P^t.semistd.P.Q.same.shape} (note that $P$ should replace the role of $P'$).

If Case 2 occurs at $R_{j-1}$, then $u$ is the entry bumped out of $R_{j-1}$ with the property that when $u'$ is inserted 
into $R_{j-1}$, $u\in R_{j-1}$ is the smallest entry with $u>u'$.
As in the proof of the first subcase of Case (b)(i) in part \eqref{lem: P^t.semistd.P.Q.same.shape}, we conclude that the 
entry $z$ of the cell below $B_k$ satisfies $z\leqslant u$, showing that $B_j$ is weakly to the left of $B_{j-1}$.

If Case 3 occurs at $R_{j-1}$, then $u$ was bumped out of $R_{j-1}$ with the property that 
when $u'$ is inserted into $R_{j-1}$, $u\in R_{j-1}$ is the smallest entry with $[u,u']\subseteq R_{j-1}$.
As in the proof of the second subcase of Case (b)(i) in part \eqref{lem: P^t.semistd.P.Q.same.shape}, we conclude 
that the entry $z$ of the cell below $B_j$ satisfies $z\leqslant u'$, $B_j$ is weakly to the left of $B_{j-1}$.

This completes the proof.
\end{proof}

For the following results, given a tableau $P$ with positive integer entries, $\row(P)$ denotes its row reading word, 
obtained by reading these entries row-by-row starting from the top row (in French notation), reading from left to right. 
We will consider $\row(P)$ as an element in a fixed $0$-Hecke monoid.

\begin{lemma}\label{lem: row.reading.eq}
Let $P$ be a tableau such that $P^t$ is semistandard and $\row(P)$ is fully-commutative.
Let $x$ be an integer such that $\row(P)\cdot x$ is fully-commutative. Then,
\begin{equation}\label{eq: row.reading.eq}
\row(P\leftarrow x)\equiv_{\mathcal{H}_0} \row(P)\cdot x.
\end{equation}
\end{lemma}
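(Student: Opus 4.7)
The plan is to induct on the length $k$ of the insertion path for $P \leftarrow x$. For the base case $k=1$, Case 1 of the $\star$-insertion occurs at the bottom row $R_1$, so $R'_1 = R_1 \cdot x$ with all higher rows unchanged, giving $\row(P \leftarrow x) = \row(P) \cdot x$ verbatim.

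For the inductive step, suppose insertion proceeds past $R_1$, bumping some letter $u_1$ into the next row, and let $P_0$ be the sub-tableau formed by rows $R_2, R_3, \ldots$, so $\row(P) = \row(P_0) \cdot \row(R_1)$. The key local identity to establish is
\[
\row(R_1) \cdot x \equiv_{\mathcal{H}_0} u_1 \cdot \row(R'_1).
\]
Writing $R_1 = r_1 < r_2 < \cdots < r_m$ (strictly increasing by Lemma~\ref{lem: star.insertion}), I would verify the identity case-by-case. In Case 2 of the $\star$-insertion, $x\notin R_1$ and $u_1=r_p$ is the smallest entry exceeding $x$; since $r_{p-1}<x<r_p$ are integers, both $r_p-r_{p-1}\geqslant 2$ and $r_{p+1}\geqslant x+2$, so $r_p$ commutes past $r_1,\ldots,r_{p-1}$ and $x$ commutes past $r_{p+1},\ldots,r_m$, giving the identity by direct computation.

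In Case 3, $x=r_s\in R_1$, $R'_1 = R_1$, and $u_1=r_p$ is the start of the maximal chain of consecutive integers $r_p,r_{p+1},\ldots,r_s$ ending at $x$. By maximality $r_{p-1}\leqslant r_p-2$ (if $p>1$), so commuting $r_p$ past $r_1,\ldots,r_{p-1}$ and applying $r_p r_p = r_p$ yields $u_1\cdot\row(R_1)\equiv_{\mathcal{H}_0}\row(R_1)$. So the identity reduces to $\row(R_1)\cdot x\equiv_{\mathcal{H}_0}\row(R_1)$, which holds provided $x=r_s$ commutes with $r_{s+1},\ldots,r_m$, i.e., $r_{s+1}\neq x+1$. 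The main obstacle of the proof is ruling out $r_{s+1}=x+1$: assuming it holds, I would let $s'\geqslant s$ be maximal so that $r_s,r_{s+1},\ldots,r_{s'}$ are consecutive integers starting at $x$; then every $r_i$ for $i>s'$ satisfies $r_i\geqslant x+(s'-s)+2$, hence commutes with every element in $[x,x+(s'-s)]$. Using commutations only, $\row(R_1)\cdot x$ is then equivalent to a word in which $r_{s'+1}\cdots r_m$ is moved to the far right and the trailing $x$ is moved left past $r_{s'},r_{s'-1},\ldots,r_{s+2}$; this produces the consecutive braid subword $x(x+1)x$, contradicting the fully-commutativity of $\row(R_1)\cdot x$ (which holds since it is a consecutive suffix of the fully-commutative word $\row(P)\cdot x$, and fully-commutativity is preserved under consecutive sub-factors because 321-avoiding permutations form a lower order ideal in the weak Bruhat order).

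Finally, I would invoke the inductive hypothesis on $P_0$ and $u_1$. Its hypotheses transfer: $(P_0)^t$ is semistandard, $\row(P_0)$ is fully-commutative as a prefix of $\row(P)$, and $\row(P_0)\cdot u_1$ is fully-commutative because $\row(P_0)\cdot u_1\cdot\row(R'_1)\equiv_{\mathcal{H}_0}\row(P_0)\cdot\row(R_1)\cdot x=\row(P)\cdot x$ is fully-commutative, whence so is its prefix. By induction, $\row(P_0\leftarrow u_1)\equiv_{\mathcal{H}_0}\row(P_0)\cdot u_1$, and then
\[
\row(P\leftarrow x) = \row(P_0\leftarrow u_1)\cdot\row(R'_1) \equiv_{\mathcal{H}_0} \row(P_0)\cdot u_1\cdot\row(R'_1) \equiv_{\mathcal{H}_0} \row(P_0)\cdot\row(R_1)\cdot x = \row(P)\cdot x,
\]
completing the induction.
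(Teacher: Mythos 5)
Your proof is correct and follows essentially the same route as the paper's: the same single-row identity $\row(R)\cdot x\equiv_{\mathcal{H}_0} y\cdot\row(R')$, established by the same commutation and idempotent computations, is chained together by induction (you peel off the bottom row and induct on the insertion-path length, whereas the paper peels off the top row and inducts on the number of rows --- a cosmetic difference). Your explicit braid argument ruling out $x+1\in R$ in Case 3, and your justification that suffixes of fully-commutative words remain fully-commutative, are welcome elaborations of facts the paper only asserts (cf.\ Remark~\ref{remark.321 avoiding}).
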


\begin{proof}
To prove \eqref{eq: row.reading.eq}, let us first prove the following statements for all row tableaux $P$:
\begin{itemize}
\item With the assumptions in lemma, if insertion terminates at row $P$ while computing $P\leftarrow x$, then 
\[
	\row(P\leftarrow x)\equiv_{\mathcal{H}_0}\row(P)\cdot x.
\]
\item With the assumptions in lemma, if $y$ is bumped from row $P$ and $P$ changes to $P'$ while computing 
$P\leftarrow x$, then 
\[
	\row(P\leftarrow x)\equiv_{\mathcal{H}_0}y\cdot\row(P').
\]
\end{itemize}
Assume that insertion terminates at row $P$ while computing $P\leftarrow x$. Then, Case 1 must have occurred and 
$P$ changes to $P'$, where $P'$ is $P$ appended by a cell containing $x$. 
Hence, we have
\[
	\row(P\leftarrow x)\equiv_{\mathcal{H}_0}\row(P')\equiv_{\mathcal{H}_0}\row(P)\cdot x.
\]

Assume that $y$ is bumped from row $P$ and $P$ changes to $P'$ while computing $P\leftarrow x$. Then, either 
Case 2 or Case 3 must have occurred.

If Case 2 occurs at $P$, then $x\not\in P$ and there is a $y\in P$ with $y>x$; furthermore, $y$ is the smallest value 
with such property.  Write $P$ as $AyB$, where $A$ and $B$ are the row subtableaux of $P$ formed by entries to the 
left and to the right of $y$, respectively. 
Then, $P\leftarrow x$ is the tableau with row $Axb$ followed by row $y$.
As $x\notin P$, we have $\max(A)<x<y<\min(B)$. Hence by commutativity relations, for all $z\in B$, we have 
$z\cdot x\equiv_{\mathcal{H}_0}x\cdot z$ and for all $z\in A$, we have $z\cdot y\equiv_{\mathcal{H}_0}y\cdot z$, so 
that regarding $A$ and $B$ as words in $\mathcal{H}_0(n)$, we obtain
\[
	A\cdot y\equiv_{\mathcal{H}_0}y\cdot A,\quad B\cdot x\equiv_{\mathcal{H}_0}x\cdot B.
\]

It follows that
\begin{multline*}
\row(P)\cdot x
\equiv_{\mathcal{H}_0} \row(AyB)\cdot x
\equiv_{\mathcal{H}_0} A\cdot y\cdot B\cdot x
\equiv_{\mathcal{H}_0} y\cdot A\cdot x\cdot B
\equiv_{\mathcal{H}_0} y\cdot \row(AxB)
\equiv_{\mathcal{H}_0} \row(P\leftarrow x).
\end{multline*}

If Case 3 occurs at $P$, then $x,y\in P$ with $y$ being the smallest value such that $[y,x]\subseteq P$. Write $P$ 
as $ABC$, where $B=[y,x]$, $A$ and $C$ are respectively the row subtableaux of $P$ formed by entries to the left and 
to the right of $B$. Then, $P\leftarrow x$ is the tableau with row $ABC$ followed by row $y$. As $\row(P)\cdot x$ was assumed 
to be fully-commutative, $x+1\notin P$. Furthermore, by minimality of $y$, $y>\max(A)+1$. Hence, by commutativity relations, 
for all $z\in A$, we have $z\cdot y\equiv_{\mathcal{H}_0} y\cdot z$ and for all $z\in C$, we have 
$x\cdot z \equiv_{\mathcal{H}_0} z\cdot x$, so that
\[
	A\cdot y\equiv_{\mathcal{H}_0}y\cdot A,\quad C\cdot x\equiv_{\mathcal{H}_0}x\cdot C.
\]

Moreover, by using the relations $p-1\; p\; p = p-1\; p-1\; p$, we have $y\cdot B\equiv_{\mathcal{H}_0} B\cdot x$. 
It follows that
\begin{multline*}
\row(P)\cdot x
\equiv_{\mathcal{H}_0} \row(ABC)\cdot x
\equiv_{\mathcal{H}_0} A\cdot B\cdot C\cdot x
\equiv_{\mathcal{H}_0} A\cdot y\cdot B\cdot C
\equiv_{\mathcal{H}_0} y\cdot \row(ABC)
\equiv_{\mathcal{H}_0} \row(P\leftarrow x).
\end{multline*}
Hence, the two statements above hold for all row tableaux $P$.

We are now ready to prove \eqref{eq: row.reading.eq} in full generality. The result follows once we prove by induction 
on the number of rows of $P$, with the given setup above, that the following statements hold:
\begin{itemize}
\item If the insertion terminates within tableau $P$ while computing $P\leftarrow x$, then
\[\row(P\leftarrow x)\equiv_{\mathcal{H}_0}\row(P)\cdot x.\]
\item If $y$ is bumped from tableau $P$ and $P$ changes to $P'$ while computing $P\leftarrow x$, then 
\[\row(P\leftarrow x)\equiv_{\mathcal{H}_0}y\cdot\row(P').\]
\end{itemize}

Indeed, if $P$ is a (possibly empty) row tableau, then we are done by the two previous statements that have been proved. 
Let $k\geqslant 1$ be an arbitrary integer. Assume that both statements mentioned above hold for all such tableaux 
$P$ with $k$ rows.

Let $P$ be a tableau with $k+1$ rows with the setup as above. Then, we may consider the subtableau $P^*$ 
formed from its first $k$ rows and denote the final row as $R$. Note that $\row(P)=\row(R)\cdot\row(P^*)$ and $\row(R)$ 
is fully-commutative.

Assume that the changes from $P$ to $P\leftarrow x$ involve at most the first $k$ rows of $P$. Then $P\leftarrow x$ 
is the same tableau as $P^*\leftarrow x$ with an extra row $R$, so that by the inductive hypothesis,
\[
\row(P\leftarrow x)
\equiv_{\mathcal{H}_0} \row(R)\cdot\row(P^*\leftarrow x)
\equiv_{\mathcal{H}_0} \row(R)\cdot\row(P^*)\cdot x\\
\equiv_{\mathcal{H}_0} \row(P)\cdot x.
\]

Now assume that the changes from $P$ to $P\leftarrow x$ involves all $k+1$ rows of $P$. Let $P'$ be the resulting 
tableau after performing these changes on $P^*$ and let $y$ be the entry bumped from the final row of $P^*$.
Then, $P\leftarrow x$ is the tableau obtained by concatenating tableau $R\leftarrow y$ after $P'$.

If the insertion terminates at row $R$, then by the previous statements for all row tableaux and the inductive hypothesis, we obtain
\begin{multline*}
\row(P\leftarrow x)
\equiv_{\mathcal{H}_0} \row(R\leftarrow y)\cdot\row(P')
\equiv_{\mathcal{H}_0} \row(R)\cdot y\cdot\row(P')\\
\equiv_{\mathcal{H}_0} \row(R)\cdot\row(P^*\leftarrow x)
\equiv_{\mathcal{H}_0} \row(R)\cdot\row(P^*)\cdot x
\equiv_{\mathcal{H}_0} \row(P)\cdot x.
\end{multline*}

Otherwise, if the insertion bumps $z$ from $R$ and $R$ changes to $R'$ while computing $R\leftarrow y$, 
then it holds that the insertion bumps $z$ from $P$ while computing $P\leftarrow x$. In this case, if we denote 
$P''$ as the tableau $P'$ concatenated by row $R'$, then
\[
\row(P)\cdot x
\equiv_{\mathcal{H}_0} \row(R\leftarrow y)\cdot\row(P')
\equiv_{\mathcal{H}_0} z\cdot\row(R')\cdot\row(P')
\equiv_{\mathcal{H}_0} z\cdot\row(P'')
\equiv_{\mathcal{H}_0} \row(P\leftarrow x).
\]
This completes the induction.
\end{proof}

\begin{remark}
\label{remark.321 avoiding}
Observe that the assumption that $\row(P)$ is fully-commutative implies that $\row(R)$ is fully-commutative for each row 
$R$ of $P$. Moreover, in the proof of Lemma \ref{lem: row.reading.eq}, if $x$ is to be inserted into row $R$ of $P$ 
when computing $P\leftarrow y$ and $x\in R$, then the extra assumption that $\row(P)\cdot x$ is fully-commutative implies 
that $R$ does not contain $x+1$.
\end{remark}

\begin{lemma} \label{lem: row.bumping}
Let $P$ be a tableau such that $P^t$ is semistandard and $\row(P)$ is fully-commutative. Let $x,x'$ be integers such 
that $\row(P)\cdot x$ and $\row(P)\cdot xx'$ are fully-commutative.

Denote the insertion paths of $P\leftarrow x$ and $(P\leftarrow x)\leftarrow x'$ as $\pi$ and $\pi'$ respectively. 
Also, suppose that $P\leftarrow x$ and $(P\leftarrow x)\leftarrow x'$ introduce boxes $B$ and $B'$ respectively. 
Then the following statements about $\star$-insertion are true:
\begin{enumerate}
	\item If $x<x'$, then $\pi'$ is strictly to the right of $\pi$. 
	Moreover, $B'$ is strictly to the right of and weakly below $B$. \label{lem: horizontal.strip}
	\item If $x\geqslant x'$, then $\pi'$ is weakly to the left of  $\pi$.  
	Moreover, $B'$ is weakly to the left of and strictly above $B$. \label{lem: vertical.strip}
\end{enumerate}
\end{lemma}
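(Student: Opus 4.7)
The approach is induction on the number of rows of $P$, predicated on a strengthened \emph{local} claim that I would prove first: for any single row $R$ satisfying the standing hypotheses, inserting two letters $a$ and $b$ successively produces insertion cells in $R$ with that of $b$ strictly right of that of $a$ when $a<b$ and weakly left when $a\geqslant b$; moreover, when both $a$ and $b$ bump letters $y_a$ and $y_b$ respectively, they satisfy $y_a<y_b$ in the first case and $y_a\geqslant y_b$ in the second. This is the $\star$-insertion analogue of the classical row-bumping lemma.

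Once the local claim is established, statement~\eqref{lem: horizontal.strip} follows by iterating up the tableau: since $x<x'$, the bumped letters at row~$1$ satisfy $y<y'$, and the local claim applied at row~$2$ produces a strictly ordered pair again, and so on, maintaining both the strict column inequality and the strict inequality of bumped letters at every row. For termination, if $x$'s chain terminates at row $k$ via Case~1, appending some letter $u$, then the corresponding letter $u'$ from $x'$'s chain satisfies $u'>u=\max$ of updated row~$k$, so $x'$'s chain also terminates via Case~1 at row~$k$, one column to the right of $B$, giving $B'$ strictly right of and weakly below $B$. Moreover $x'$ cannot terminate earlier, since at any row where the letter from $x$'s chain is still bumping, that letter is at most the row maximum and the strictly larger letter from $x'$'s chain is still accommodated by Case~2 or~3. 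Statement~\eqref{lem: vertical.strip} follows by symmetric reasoning; the extra strictness in the vertical direction comes from the observation that when $a\geqslant b$, at every row where the $a$-chain appends via Case~1, the $b$-chain is still below the new maximum and so must progress at least one additional row.

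I would prove the local claim by a careful case analysis on which of Cases~1, 2, or~3 of Definition~\ref{def: new_insertion} applies at $R$ to each of $a$ and $b$. For $a<b$: if $a$ triggers Case~1 the new maximum of $R$ is $a$, and $b>a$ forces Case~1 for $b$ appended one column further right. If $a$ triggers Case~2, replacing some $y>a$ at column $c$, then after the update $R$ has $a$ at column $c$ and the original entries $\geqslant y$ to the right; comparing $b$ to these entries one obtains Case~1, 2, or~3 with insertion column strictly right of $c$ and bumped letter (if any) strictly greater than $y$. The Case~3 analysis is the one that requires the fully-commutativity hypothesis: via Remark~\ref{remark.321 avoiding} one knows that $R$ does not contain $x+1$ whenever $x$ is being inserted and $x\in R$, which pins down the structure of the interval $[y,x]\subseteq R$ from Case~3 of Definition~\ref{def: new_insertion} and rules out configurations that would otherwise break the comparisons.

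The hard part will be the mixed Case~2/Case~3 bookkeeping in the local claim, since Case~3 leaves $R$ unchanged yet still emits a bumped letter. In particular, when $a$ triggers Case~3 and $b$ triggers Case~2, the comparison for $b$ must be carried out against an unmodified~$R$, and one must verify that the bumped interval endpoint for $a$ is strictly less than the letter bumped by $b$. Here the fully-commutativity of $\row(P)\cdot ab$ (used together with Remark~\ref{remark.321 avoiding}) is what excludes the problematic arrangement of consecutive integers in $R$. Once these interval relations are untangled, the remaining column and value comparisons reduce to elementary inequalities, and the outer induction on rows proceeds without further obstacle.
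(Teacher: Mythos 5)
Your strategy coincides with the paper's: a Fulton-style row-by-row comparison of the two bumping chains, establishing at each row that the second chain's cell lies strictly right (resp.\ weakly left) of the first's and that the emitted letters preserve the ordering, with fully-commutativity (via Remark~\ref{remark.321 avoiding}) controlling the interval structure in the Case~3 subcases. That local claim plus the upward induction is exactly how the paper argues, and the case analysis you outline is the right one.

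There is, however, one concrete misstep in your treatment of part~(1): the claim that ``$x'$ cannot terminate earlier'' is false. Take the bottom row $R=(1,5)$ with $x=3<x'=4$ (so $\row(P)\cdot 3$ and $\row(P)\cdot 34$ are fully-commutative): the letter $3$ bumps $5$ via Case~2, leaving $(1,3)$ and sending $5$ upward, and then $4>3=\max$ of the updated row is appended via Case~1, so $\pi'$ stops at a row where $\pi$ continues. Your supporting reasoning --- that the strictly larger letter is ``still accommodated by Case~2 or~3'' --- breaks exactly when the letter bumped by the $x$-chain was the row maximum. Fortunately the lemma does not need this claim: what must be excluded is $\pi'$ continuing \emph{past} the row where $\pi$ ends (your first termination argument handles that), and in the case where $\pi'$ ends at a strictly lower row, the conclusion that $B'$ is strictly to the right of $B$ follows from the fact that insertion paths drift weakly left as they ascend (part~(2) of Lemma~\ref{lem: star.insertion}): $B$ is weakly left of the cell of $\pi$ in the row where $\pi'$ terminates, which is strictly left of $B'$. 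The paper invokes precisely this property to place $B'$ relative to $B$ in both parts of the lemma; your argument needs it as well, in particular to get ``$B'$ weakly left of $B$'' in part~(2), where $B'$ sits in a strictly higher row than $B$.
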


\begin{proof}
Similar to Fulton's proof \cite{Fulton.1996} of the Row Bumping Lemma, we will keep track of the entries as they are 
bumped from a row. Consider a row $R$ of tableau $P$ and suppose that $u$ and $u'$ are to be inserted into $R$ 
when computing $P\leftarrow x$ and $(P\leftarrow x)\leftarrow x'$ respectively, where $u<u'$. 
Denote by $C$ (similarly $C'$) the box in $\pi$ (similarly $\pi'$) that is also in $R$. 

\medskip

\noindent \textbf{Case 1:} $x<x'$. We will prove that the following assertions hold for $R$:
\begin{enumerate}
\item[(a)] If the insertion terminates at $R$ while computing $P\leftarrow x$, then the insertion terminates at $R$ 
while computing $(P\leftarrow x)\leftarrow x'$.
\item[(b)] $C'$ is strictly to the right of $C$.
\end{enumerate}
Note that the insertion terminates at $R$ when computing $P\leftarrow x$ precisely when Case 1 of the $\star$-insertion
occurs at $R$. Box $C$ containing $u$ is appended at the end of $R$. 
As $u'>u$, Case 1 occurs again at $R$ with box $C'$ containing $u'$ appended to the right of $C$, so bumping does 
not occur at $R$ when computing $(P\leftarrow x)\leftarrow x'$. This proves (a) and simultaneously, (b) for this case.

Let us assume that bumping occurs at $R$ with $y$ bumped out when computing $P\leftarrow x$. 
\begin{description}
\item [Case A] If $y$ is bumped from $R$ because Case 2 occurs, the insertion at row $R$ introduced to box $C'$ 
occurs strictly to the right of $C$ (containing $u$) because:
\begin{enumerate}
\item [(i)] If $u'>\max(R)$, then box $C'$ containing 
$u'$ is appended to the end of $R$ by Case 1. 
In particular, $C'$ appears strictly to the right of $C$.
\item [(ii)] Otherwise, since $u'>u$, the letter $u'$ is inserted into a 
box $C'$ strictly to the right of $C$ with $y'$ bumped out. 
If $u'\notin R$, Case 2 occurs and $y'>y$ because 
$C'$ and $C$ originally contained $y'$ and $y$ respectively. 
Else, $u'\in R$ and Case 3 occurs. Suppose that $[y',u']$ is the 
longest interval of consecutive integers contained in $R$. 
Since box $C$ that originally contained $y$ is strictly to the left of $C'$, 
we have $u<y<u'$. Therefore, $[u,u']$ cannot be contained in $R$, so $y<y'$. 
\end{enumerate}
\item [Case B] Otherwise, $y$ is bumped from $R$ because Case 3 occurs when computing $P\leftarrow x$ 
and $[y,u]$ is the longest interval of consecutive integers contained in $R$ by Remark~\ref{remark.321 avoiding}.
The insertion at row $R$ introduced to box $C'$ occurs strictly to the right of $C$ (containing $u$) because:
\begin{enumerate}
\item[(i)] If either $u'>\max(R)$ or $u'\notin R$, then by similar 
arguments as in Case A(i) and Case A(ii), $C'$ appears to the right of $C$. 
Furthermore, in the latter situation, by a similar argument in Case A(ii), we have $y<y'$.
\item[(ii)] Otherwise, $u'\in R$ and Case 3 occurs. 
As $u'>u$, $u'$ is inserted into box $C'$ strictly to the right of $C$ with $y'$ bumped out. 
In addition, $[y',u']$ is the longest interval of consecutive integers contained in $R$. 
As $\row(R)$ is fully-commutative before computing $P\leftarrow x$, $u+1\notin R$. 
Hence $[u,u']$ cannot be contained in $R$. It follows that $y\leqslant u<u+1<y'$.
\end{enumerate}
\end{description}
Note that in the arguments above, we have also shown that if $y$ and $y'$ are bumped from $R$ 
when computing $P\leftarrow x$ and $(P\leftarrow x)\leftarrow x'$ respectively, then $y<y'$. 
It follows that we may apply similar arguments in the rows following $R$. 
Since assertion (b) now holds for all rows, we conclude that $\pi'$ is strictly to the right of $\pi$. 
In addition, $\pi'$ cannot continue after $\pi$ ends because of assertion (a). 
Considering that $\pi'$ goes weakly left by Lemma \ref{lem: star.insertion}, 
we conclude that box $B'$ is strictly to the right of and weakly below $B$.

\medskip

\noindent \textbf{Case 2:} $x\geqslant x'$. We will prove that the following assertions hold for $R$:
\begin{enumerate}
\item If the insertion terminates at $R$ while computing $P \leftarrow x$, then bumping occurs at $R$ while 
computing $(P\leftarrow x)\leftarrow x'$.
\item $C'$ is weakly to the left of $C$.
\end{enumerate}

If the insertion terminates at row $R$ when computing $P\leftarrow x$, 
then Case 1 occurs and box $C$ containing $u$ is appended at the end of $R$. 
If $u'\in R$, Case 3 occurs at $R$ with $y'\leqslant u'\leqslant u$ bumped out. 
Furthermore, box $C'$ containing $u'$ is weakly to the left of $C$.
If $u'\notin R$, Case 2 occurs at $R$ with $y'>u'$ bumped out and $u'<u$. 
We have $y'\leqslant u$ by minimality of $y'$, so that box $C'$ is weakly to the left of $C$.
In either of the subcases, bumping occurs at 
$R$ when computing $(P\leftarrow x)\leftarrow x'$. This proves (a) and simultaneously, (b) for this case.

Let us assume that bumping occurs at $R$ with $y$ bumped out when computing $P\leftarrow x$. 
\begin{description}
\item[Case A] If $y$ is bumped from $R$ because Case 2 occurs when computing $P\leftarrow x$, the insertion at row 
$R$ introduced to box $C'$ occurs weakly to the left of $C$ (containing $u$) because:
\begin{enumerate}
\item [(i)] If $u'\notin R$, then $u'$ is inserted into box $C'$ containing $y'$ by Case 2, while bumping out this $y'$. 
As $u'<u$, we have $y'\leqslant u<y$ and that $C'$ appears weakly to the left of $C$.
\item [(ii)]Otherwise, $u'\in R$ and Case 3 occurs. The letter $u'$ is inserted into box $C'$ weakly to the left of $C$ as 
$u'\leqslant u$. In addition, if $[y',u']$ is the longest interval of consecutive integers in $R$, then $y'$ is bumped out. 
Furthermore, we have $y'<y$ as $C$, which originally contained $y$ before computing $P\leftarrow x$, is to the right 
of the box containing $y'$. 
\end{enumerate}
\item[Case B] Otherwise, $y$ is bumped from $R$ because Case 3 occurs when computing $P\leftarrow x$. Let $[y,u]$ 
be the longest interval of consecutive integers that is contained in $R$. The insertion at row $R$ introduced to box $C'$ 
occurs weakly to the left of $C$ (containing $u$) because:
\begin{enumerate}
\item[(i)] If $u'\notin R$, then $u'<u$, $u'$ is inserted into box $C'$ containing $y'$ and $y'$ is bumped out by Case 2. 
As $\row(P)\cdot x$ is fully-commutative, in particular $\row(R)$ is fully-commutative. Hence $u'<y$, so that $C'$ is weakly to
the left of box containing $y$ (hence also weakly to the left of $C$). Furthermore, we have $y'\leqslant y$ by the minimality 
of $y'$. 
\item[(ii)] If $u'\in R$, then either $u'=u$ or $u<u'$. The former case is easy as Case 3 occurs again with $u'$ inserted 
into $C'=C$ and $y'=y$ is bumped out. If $u<u'$, then as $\row(P)\cdot x$ is fully-commutative, $\row(R)$ is fully-commutative, 
so that $u'<y-1$. It follows that $C'$ is strictly to the left of box containing $y$ (hence also strictly to the left of $C$). 
Furthermore, we have $y'\leqslant u'<y-1<y$. 
\end{enumerate}
\end{description}

Note that in the arguments above, we have also shown that if $y$ and $y'$ are bumped from $R$ when computing 
$P\leftarrow x$ and $(P\leftarrow x)\leftarrow x'$ respectively, then $y\geqslant y'$. It follows that we may apply similar 
arguments in the rows following $R$. 
Since assertion (b) now holds for all rows, we conclude that $\pi'$ is weakly to the left of $\pi$. 
In addition, $\pi'$ must continue after $\pi$ ends because of assertion (a). Considering that $\pi'$ goes weakly left by 
Lemma \ref{lem: star.insertion}, we conclude that box $B'$ is weakly to the left of and strictly above $B$.
\end{proof}

Let $U$ be a tableau such that $U^t$ is semistandard and $\row(U)$ is fully-commutative. We describe 
the \defn{reverse row bumping} for $\star$-insertion of $U$ as follows. 
Locate an inner corner of $U$ and remove entry $y$ from that row. 
Perform the following operations until an entry is bumped out of the bottommost row. Suppose that we are reverse 
bumping $y$ into a row $R$. If $y\notin R$, find the largest $x\in R$ with $x<y$; insert $y$ and bump out $x$. 
Otherwise, $y\in R$, so find the largest $x\in R$ such that $[y,x]$ is the longest interval of consecutive integers. 
In this case, row $R$ remains unchanged but $x$ is bumped out. Then reverse bump $x$ into the next row below
unless there is no further row below. In this case, terminate and return the resulting tableau as $T$ along with the 
bumped entry $x$. It is straightforward to see that reverse row bumping specified above reverses the bumping process 
specified by the $\star$-insertion.

\begin{example}\label{eg: reverse.row.bumping}
Let $U$ be the tableau
\[
U \;=\;
\raisebox{2cm}{
\ytableausetup{notabloids}
\begin{ytableau}
5\\
2\\
2 & 5\\
2 & 3 & 5\\
1 & 2 & 4
\end{ytableau}}
\;.
\]
By performing reverse row bumping on the topmost 5 in $U$, we obtain
\[
T \;=\;
\raisebox{1.5cm}{
\ytableausetup{notabloids}
\begin{ytableau}
5\\
2 & 5\\
2 & 3 & 5\\
1 & 3 & 4
\end{ytableau}}
\]
and entry 2.
It is also straightforward to check that $U=T\leftarrow 2$.
\end{example}

\begin{corollary}\label{cor:horizontal.vertical.strips}
Let $T$ be a tableau of shape $\lambda$ such that $T^t$ is semistandard and $row(T)$ is fully-commutative.
Let $k$ be a positive integer.

Let $x_1<x_2<\dots<x_k$ (similarly $x_k\leqslant \dots \leqslant x_2\leqslant x_1$) be integers such that
$\row(T)\cdot x_1 x_2\dots x_i$ is fully-commutative for all $1\leqslant i\leqslant k$. Then, the collection of boxes added
to $T$ to form the tableau 
\[
	U=((T\leftarrow x_1)\leftarrow x_2)\cdots \leftarrow x_k
\] 
has the property that no two boxes are in the same column (similarly row). 

Conversely, if $U$ is a tableau of shape $\mu$ such that $\lambda\subseteq\mu$ and $\mu/\lambda$ consists of 
$k$ boxes with no two boxes in the same column, i.e, a horizontal strip of size $k$ (similarly row, i.e., a vertical strip of 
size $k$), then there is a unique tableau $T$ of shape $\lambda$ and unique integers $x_1<x_2<\cdots<x_k$ 
(similarly $x_k\leqslant \cdots\leqslant  x_2\leqslant x_1$) such that 
\[
	U=((T\leftarrow x_1)\leftarrow x_2)\cdots \leftarrow x_k.
\] 
In particular, if $(P,Q)=\star([\mathbf{k},\mathbf{h}]^t)$, where $[\mathbf{k},\mathbf{h}]^t$ is a fully-commutative 
decreasing Hecke biword, then $Q$ is semistandard.
\end{corollary}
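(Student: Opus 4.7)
The plan is to obtain the direct statement by iterating Lemma \ref{lem: row.bumping} across consecutive insertions, obtain the converse by reverse row-bumping (as described before Example \ref{eg: reverse.row.bumping}) together with a uniqueness argument that again invokes Lemma \ref{lem: row.bumping}, and derive the semistandardness of $Q$ by applying the horizontal-strip case to each factor of the biword.

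For the direct statement I would induct on $k$. In the strictly increasing case, the inductive hypothesis combined with Lemmas \ref{lem: star.insertion} and \ref{lem: row.reading.eq} ensures that the intermediate tableaux have semistandard transpose and fully-commutative row reading word at every step, so Lemma \ref{lem: row.bumping}(\ref{lem: horizontal.strip}) applies to each consecutive pair and places $B_{i+1}$ strictly to the right of $B_i$. Chaining these strict comparisons yields strictly increasing column indices, hence a horizontal strip. The weakly decreasing case is symmetric via Lemma \ref{lem: row.bumping}(\ref{lem: vertical.strip}), giving pairwise distinct rows.

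For the converse with $\mu/\lambda$ a horizontal strip, I would reverse-bump starting from the rightmost box $B^{(k)}$ of $\mu/\lambda$ to obtain an integer $x_k$ and a tableau of shape $\mu \setminus \{B^{(k)}\}$, and iterate down to a tableau $T$ of shape $\lambda$ together with a sequence $x_k, x_{k-1}, \dots, x_1$. Since reverse row-bumping inverts forward insertion, $U = ((T \leftarrow x_1)\cdots)\leftarrow x_k$. To verify $x_1 < x_2 < \dots < x_k$, I would argue by contradiction: an inversion $x_{i-1} \geqslant x_i$ would, by Lemma \ref{lem: row.bumping}(\ref{lem: vertical.strip}), force $B_i$ to lie weakly to the left of $B_{i-1}$, contradicting the fact that by construction $B_i = B^{(i)}$ lies strictly to the right of $B_{i-1} = B^{(i-1)}$. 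Uniqueness follows from the direct statement: in any strictly increasing preimage the last-inserted box must be the rightmost box of $\mu/\lambda$, which inductively determines both $T$ and the $x_i$. The vertical-strip converse is analogous, reverse-bumping from the topmost box downward and using the weak inequality furnished by Lemma \ref{lem: row.bumping}(\ref{lem: vertical.strip}).

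Finally, for semistandardness of $Q$, I would partition the columns of $[\mathbf{k},\mathbf{h}]^t$ according to the value $i$ of the first entry. Within factor $h^i$ the right-to-left reading inserts the letters of $h^i$ in strictly increasing order (since $h^i$ is strictly decreasing), so the direct statement shows that the cells of $Q$ labelled $i$ form a horizontal strip. Factors are processed in order of increasing $i$, so cells labelled $i+1$ are added only after all cells labelled by values at most $i$ have been placed; combined with the intra-factor horizontal-strip property, this yields strictly increasing columns and weakly increasing rows in $Q$. The main obstacle is the strict-ordering part of the converse, where Lemma \ref{lem: row.bumping}(\ref{lem: vertical.strip}) is essential to rule out the non-strict alternative and to force the reverse-bumping order.
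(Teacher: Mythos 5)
Your proposal is correct and follows essentially the same route as the paper: the direct statement by chaining Lemma~\ref{lem: row.bumping} over consecutive insertions, the converse by reverse row bumping from the rightmost (resp.\ topmost) box with the ordering of the $x_i$ forced by the same lemma, and semistandardness of $Q$ by applying the horizontal-strip case factor by factor. Your explicit contradiction argument for the strict ordering and the uniqueness argument via the direct statement are slightly more detailed than the paper's, but they formalize exactly the steps the paper asserts.
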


\begin{proof}
Assume that $x_1<x_2<\cdots<x_k$.
By statement~\eqref{lem: horizontal.strip} of Lemma~\ref{lem: row.bumping}, the sequence of added boxes in
$U=((T\leftarrow x_1)\leftarrow x_2)\cdots \leftarrow x_k$ moves weakly below and strictly to the right when computing 
$U$. In particular, no two of the added boxes can be in the same column.

To recover the required tableau $T$ and integers $x_1<x_2<\cdots<x_k$, perform reverse row bumping on the boxes 
specified by the shape $\mu/\lambda$ within $U$ starting from the rightmost box, working from right to left. The 
tableau $T$ and the integers $x_1,x_2,\dots,x_k$ are uniquely determined by the operations. Moreover, by
Lemma~\ref{lem: row.bumping}, the integers $x_k,x_{k-1},\dots, x_1$ obtained in the given order of operations satisfy
$x_1<x_2<\cdots<x_k$.

Now assume $x_k\leqslant \cdots \leqslant x_2\leqslant x_1$. By statement~\eqref{lem: vertical.strip} of 
Lemma~\ref{lem: row.bumping}, the sequence of added boxes moves strictly above and weakly to the right when 
computing $U$. In particular, no two of the added boxes can be in the same row.

Similarly, one may perform reverse row bumping on the boxes specified by the shape $\mu/\lambda$ within $U$ 
starting from the topmost box, working from top to bottom. Again, the operations uniquely determine the tableau $T$ 
and the integers $x_1,x_2,\dots,x_k$. Moreover, by Lemma \ref{lem: row.bumping}, the integers
$x_k,x_{k-1},\dots, x_1$ obtained in the given order of operations satisfy $x_k\leqslant \cdots\leqslant x_2\leqslant x_1$.

Finally, note that in a decreasing Hecke biword $[\mathbf{k},\mathbf{h}]^t$, where $\mathbf{h}=h^m\dots h^2 h^1$,
entries within a fixed $a^i$ are inserted in increasing order. It follows that the collection of all boxes with label $i$ form 
a horizontal strip within the tableau $Q$. Collecting all these horizontal strips with values $i$ from $m$ to $i$ in order 
by using the converse recovers $Q$, implying that $Q$ is semistandard.
\end{proof}

\begin{theorem}
\label{theorem.star insertion bijection}
The $\star$-insertion is a bijection from the set of all fully-commutative decreasing Hecke biwords to the set of all pairs of 
tableaux $(P,Q)$ of the same shape, where both $P^t$ and $Q$ are semistandard and $\row(P)$ is fully-commutative.
\end{theorem}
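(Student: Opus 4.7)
The proof splits into three steps: showing that $\star$ lands in the claimed codomain, constructing an inverse map using reverse row bumping guided by $Q$, and checking the two maps are mutual inverses.

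For the forward direction, given a fully-commutative decreasing Hecke biword $[\mathbf{k},\mathbf{h}]^t$, most of what is required has already been assembled. Lemma~\ref{lem: star.insertion}\eqref{lem: P^t.semistd.P.Q.same.shape} provides that $P^t$ is semistandard and that $P$, $Q$ have the same shape, and Corollary~\ref{cor:horizontal.vertical.strips} asserts that $Q$ is semistandard. To see that $\row(P)$ is fully-commutative, I would iterate Lemma~\ref{lem: row.reading.eq}: if $x_1, x_2, \ldots, x_n$ denote the bottom-row entries of the biword read right-to-left, then by induction $\row(P_i) \equiv_{\mathcal{H}_0} x_1 x_2 \cdots x_i$. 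Since $\mathbf{h}$ is fully-commutative and full-commutativity is stable under reversal (the braid relations are palindromic), each initial segment $x_1 \cdots x_i$ is fully-commutative, and hence so is $\row(P_i)$; this in particular maintains the inductive hypothesis needed to reapply Lemma~\ref{lem: row.reading.eq} at the next step.

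For the inverse map, given $(P,Q)$ with the stated properties, I would peel $P$ and $Q$ apart one horizontal strip at a time, from the top label down. Let $m = \max(Q)$. Since $Q$ is semistandard, the cells labeled $m$ form a horizontal strip, and the converse direction of Corollary~\ref{cor:horizontal.vertical.strips} produces a unique tableau $P^{(m-1)}$ and unique integers $x_1 < x_2 < \cdots < x_{\ell_m}$ with $P = (\cdots ((P^{(m-1)} \leftarrow x_1) \leftarrow x_2) \cdots) \leftarrow x_{\ell_m}$. Set $h^m = x_{\ell_m} \cdots x_1$ (strictly decreasing) and let $Q^{(m-1)}$ be $Q$ with the $m$-labeled cells removed. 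Iterating for $k = m-1, m-2, \ldots, 1$ yields the decreasing factors $h^{m-1}, \ldots, h^1$, and I assemble the biword from $\mathbf{h} = h^m h^{m-1} \cdots h^1$ with $\mathbf{k}$ recording the corresponding multiplicities. The two maps are mutual inverses: applying $\star$ to the reconstructed biword recovers $(P,Q)$ because the forward horizontal-strip insertion and the reverse unbumping on each label class are inverse operations by the uniqueness clause of Corollary~\ref{cor:horizontal.vertical.strips}, and the reverse direction is symmetric.

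The main obstacle is verifying that the recursive construction is well-defined: at every stage $k$ the intermediate tableau $P^{(k)}$ must satisfy the hypotheses of Corollary~\ref{cor:horizontal.vertical.strips}, namely $(P^{(k)})^t$ semistandard and $\row(P^{(k)})$ fully-commutative. The former is preserved because reverse row bumping is designed to undo the case analysis of Definition~\ref{def: new_insertion} step by step, keeping the transpose-semistandard condition intact. For the latter, I would apply Lemma~\ref{lem: row.reading.eq} iteratively along the horizontal strip being unbumped to obtain a relation $\row(P) \equiv_{\mathcal{H}_0} \row(P^{(k)}) \cdot z$ for some word $z$; full-commutativity of $\row(P)$ (which is assumed on the pair $(P,Q)$) then forces full-commutativity of the initial subword $\row(P^{(k)})$. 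A final application of Lemma~\ref{lem: row.reading.eq} across all stages shows that the reconstructed $\mathbf{h}$ is $\mathcal{H}_0$-equivalent to the reverse of $\row(P)$, and is therefore itself fully-commutative, so the produced biword does belong to the domain of $\star$.
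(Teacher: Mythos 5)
Your proposal is correct and follows essentially the same route as the paper: establish the codomain via Lemma~\ref{lem: star.insertion}, Lemma~\ref{lem: row.reading.eq}, and Corollary~\ref{cor:horizontal.vertical.strips}, then invert by peeling off the horizontal strips of $Q$ from the largest label down using reverse row bumping and the uniqueness clause of Corollary~\ref{cor:horizontal.vertical.strips}. Your added care about why the intermediate tableaux stay in the hypotheses of the lemmas (prefix/reversal stability of full commutativity) is a point the paper leaves implicit, but it is the same argument.
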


\begin{proof}
By successive applications of Lemma \ref{lem: row.reading.eq}, if $(P,Q)=\star([\mathbf{k},\mathbf{h}]^t)$, then as 
$\mathbf{h}$ is fully-commutative, $\row(P)$ is also fully-commutative. 
Hence, using Lemma \ref{lem: star.insertion} and Corollary \ref{cor:horizontal.vertical.strips}, 
$\star$-insertion is a well-defined map from the set of all fully-commutative decreasing Hecke biwords to the set of 
all pairs of tableaux $(P,Q)$ of the same shape with both $P^t,Q$ semistandard and $\row(P)$ being fully-commutative.

It remains to show that the $\star$-insertion is an invertible map. Assume that $P$ and $Q$ are tableaux of the same 
shape with both $P^t,Q$ semistandard and $\row(P)$ being fully-commutative. 
Since $Q$ is semistandard, the collection of boxes with the same entry form a horizontal strip. Starting with the 
largest such entry $m$, perform reverse row bumping with the boxes in the strip from right to left. 
By Lemma \ref{lem: row.bumping}, this recovers the entries in $h^m$ in decreasing order. 
Repeating this procedure in decreasing order of entries recovers $\mathbf{h}=h^m\dots h^2 h^1$, which
automatically yields a decreasing Hecke biword $[\mathbf{k},\mathbf{h}]^t$. 
Furthermore, by repeated applications of Lemma \ref{lem: row.reading.eq}, since $\row(P)$ was fully-commutative, 
then the reverse word of $\mathbf{h}$ is fully-commutative, so that $\mathbf{h}$ is fully-commutative too. 
Finally, by repeated applications of the converse stated in Corollary \ref{cor:horizontal.vertical.strips}, the recovered 
decreasing Hecke biword $[\mathbf{k},\mathbf{h}]^t$ is unique.
\end{proof}

\section{Properties of the $\star$-insertion}
\label{section.properties}

In this section, we show that the $\star$-insertion intertwines with the crystal operators. More precisely, 
the insertion tableau remains invariant on connected crystal components under the $\star$-insertion as shown
in Section~\ref{section.micro moves} by introducing certain micro-moves. In Section~\ref{section.star insertion and crystal},
it is shown that the $\star$-crystal on $\mathcal{H}^{m,\star}$ intertwines with the usual crystal operators on semistandard 
tableaux on the recording tableaux under the $\star$-insertion. In Section~\ref{section.uncrowding}, we relate
the $\star$-insertion to the uncrowding operation.

\subsection{Micro-moves and invariance of the insertion tableaux}
\label{section.micro moves}

In this section, we introduce certain equivalence relations of the $\star$-insertion in order to establish its relation with the 
$\star$-crystal. From now on we are focusing on the sequence in the insertion order. Since each decreasing 
factorization $\mathbf{h}$ is inserted from right to left, we look at $\mathbf{h}$ read from right to left.

\begin{definition}\label{def: equiv}
We define an equivalence relation through \defn{micro-moves} on fully-commutative words in $\mathcal{H}_0(n)$.
\begin{enumerate}
	\item \defn{Knuth moves}, for $x<z<y$:
	\begin{enumerate}
		\item[(I1)] $xyz \sim yxz$
		\item[(I2)] $zxy \sim zyx$
	\end{enumerate}
	\item \defn{Weak Knuth moves}, for $y>x+1$:
	\begin{enumerate}
		\item[(II1)] $xyy \sim yxy$ 
		\item[(II2)] $xxy \sim xyx$
	\end{enumerate}
	\item \defn{Hecke move}, for $y=x+1$:\\
	(III) $xxy \sim xyy$
\end{enumerate}
Note that the micro-moves preserve the relation $\equiv_{\mathcal{H}_0}$.
\end{definition}

Similar relations have appeared in~\cite[Eq.~(1.2)]{FominGreene.1998}.

\begin{example}
\label{example.Hecke words}
The $13242\in\mathcal{H}_0(5)$ is equivalent to $31242$, $13422$, $13224$, $31224$, and itself.
\end{example}

Next, we use the following notation on $\star$-insertion tableaux. For a single-row increasing tableau $R$,
let $R^x$ denote the first row of the tableau $R\leftarrow x$ and let $R(x)$ denote the output of the $\star$-insertion
from the first row. If the $\star$-insertion outputs a letter, then denote it by $R(x)$; if $x$ is appended to the end of the 
row $R$, then the output $R(x)$ is $0$, which can be ignored. We always have $x\cdot 0 \sim x \sim 0 \cdot x$.

\begin{example}
Let $R = 
\raisebox{0cm}{
	\ytableausetup{notabloids} 
	\begin{ytableau}
	1 & 3 & 4 & 6 & 7 & 8
	\end{ytableau}
}$, then the first row of $R\leftarrow 7$ is
\[
	R^7 = \raisebox{0cm}{\ytableausetup{notabloids} \begin{ytableau} 1 & 3 & 4 & 6 & 7 & 8
	\end{ytableau}}
\]
and $R(7) = 6.\,$ Furthermore, the first row of $R^7 \leftarrow 9$ is $R^{7,9} = 
\raisebox{0cm}{
\ytableausetup{notabloids} 
\begin{ytableau}
1 & 3 & 4 & 6 & 7 & 8 & 9
\end{ytableau}
}$ and $R^8(9)=0$.
\end{example}

\begin{lemma} \label{lema: single-row}
Let $R$ be a single-row increasing tableau, and $x,y,z$ be letters such that $\row(R)\cdot x\cdot y \cdot z$ is 
fully-commutative. Let $x',y',z'$ be letters such that $xyz \sim x'y'z'$. Following the above notation, we have 
\[
	R^{xyz} = R^{x'y'z'} \qquad \text{and} \qquad R(x)R^x(y)R^{xy}(z) \sim R(x')R^{x'}(y')R^{x'y'}(z').
\]
\end{lemma}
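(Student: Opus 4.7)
The plan is a direct case analysis on the five micro-move types (I1), (I2), (II1), (II2), (III). Since the statement is symmetric in $\{xyz, x'y'z'\}$, for each unordered pair we pick a representative and verify both conclusions. Throughout, I will use that $\row(R)\cdot x\cdot y\cdot z$ is fully-commutative; the key consequence (cf.\ Remark~\ref{remark.321 avoiding}) is that after each insertion into $R$ no row contains an interval $[a,a+1]$ together with its successor $a+2$ in a way that would force a braid, so the ``consecutive-interval'' hypothesis in Case 3 of the $\star$-insertion is under tight control.

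For each insertion into the single row $R$, exactly one of three branches occurs: \textbf{(C1)} the letter exceeds $\max(R)$ and is appended, with output $0$; \textbf{(C2)} the letter is not in the current row and bumps the smallest strictly greater entry; \textbf{(C3)} the letter lies in the current row, and the smallest entry $a$ with $[a,x]\subseteq R$ is bumped without changing the row. The first step of the argument is, for each micro-move shape, to enumerate which combination of C1/C2/C3 can occur on each side, using the constraints: (a) which of $x,y,z$ lie in $R$; (b) the order relations among $x,y,z$ imposed by the micro-move; (c) full-commutativity. For instance, for Knuth move (I1) with $x<z<y$, the letter $y$ (resp.\ $y'=x$) is the second (resp.\ first) inserted; analyzing whether $y$ lies in $R$ pins down whether we are in C2 or C3, and the symmetry $x<z<y$ forces the same final row $R^{xyz}=R^{yxz}$ because only the insertion of $y$ (the largest) affects entries $\geqslant z$, while $x$ and $z$ fight over the same cell containing the smallest element of $R$ that is $>x$.

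To illustrate the flavor, consider the weak Knuth move (II1) $xyy\sim yxy$ with $y>x+1$. If $y\notin R$, then inserting $y$ into $R$ lands in C2 bumping some $w>y$, and a second insertion of $y$ now finds $y$ in the row and triggers C3 with output $y$ itself (by full-commutativity, $y+1\notin R$, so the maximal interval $[y,y]$ gives output $y$). On the left side $R^x$ differs from $R$ only in positions $\leqslant x$, so the two subsequent insertions of $y$ behave identically to the right side after the single insertion of $y$ in positions $\geqslant y$. Tracking the bumped entries shows both sides produce the same final row and output sequences $R(x)\cdot w\cdot y$ on the left versus $w\cdot R(x)\cdot y$ on the right, which are related by a Knuth move because $R(x)<x+1<y\leqslant w$. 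The Hecke move (III) $xxy\sim xyy$ with $y=x+1$ is the subtlest: one must verify that the two Case-3 branches produced by consecutive-valued insertions yield outputs of the form $a\cdot a \cdot (a+1)$ on one side and $a\cdot (a+1)\cdot (a+1)$ on the other, matching under (III) itself; here full-commutativity of $\row(R)\cdot xyz$ rules out the problematic configurations that would make the bumped letters disagree.

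The main obstacle is the sheer multiplicity of subcases: each of the five micro-moves splits further according to which of $x,y,z$ (or $x',y',z'$) lie in $R$ and into which of C1/C2/C3 each insertion falls, yielding a sizable table. The verification in each cell is a short computation, so the real work is organizing the case split cleanly and repeatedly invoking the fully-commutativity hypothesis via Remark~\ref{remark.321 avoiding} to eliminate configurations that cannot arise. Once all cases are tabulated, both conclusions $R^{xyz}=R^{x'y'z'}$ and $R(x)R^x(y)R^{xy}(z)\sim R(x')R^{x'}(y')R^{x'y'}(z')$ follow by inspection, with the output equivalence always realized by the same micro-move type as the input (sometimes after using $x\cdot 0 = 0 \cdot x = x$ to absorb trivial outputs from C1 branches).
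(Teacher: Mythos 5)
Your plan is essentially the paper's own proof: an exhaustive case analysis over the five micro-move types, split further according to which of Case 1/2/3 of the $\star$-insertion each of the three insertions falls into and which of the letters already lie in $R$, with the final rows compared directly and the bumped outputs matched by a micro-move (absorbing the trivial outputs $0$ from appended letters). One caution for when you fill in the table: in Case 3 of the $\star$-insertion the interval extends \emph{downward} from the inserted letter, so in your (II1) illustration the output of the second insertion of $y$ is the smallest $a$ with $[a,y]$ contained in the row --- to conclude this output equals $y$ you must rule out $y-1$ being in the row (as the paper does via the minimality of $R(x)$), not appeal to $y+1\notin R$.
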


\begin{proof}
Let $R$ be a single-row increasing tableau and $M$ be the largest letter in $R$. First note that if $a\in R$ and 
$\row(R)\cdot a$ is fully-commutative, then $a+1\notin R$, see also Remark~\ref{remark.321 avoiding}.

There are five types of equivalence triples, so we discuss them in 3 groups. 

\smallskip \noindent 
\textbf{1. Cases (I1) and (II1):} We have $x<z<y$, or $x<z=y$ and $y>x+1$. In both cases $x'=y, y'=x, z'=z$. 

\smallskip \noindent
\textbf{Case (1A):}  $M<x<z\leqslant y$. In this case, the first resulting tableau is $R^{xyz}=\ytableausetup{notabloids} 
\begin{ytableau}
R & x & z
\end{ytableau}$ and the outputs are $R(x)=R^x(y)=0$ and $R^{xy}(z) = y$. The second resulting tableau is 
$R^{yxz}=\ytableausetup{notabloids} 
\begin{ytableau}
R & x & z
\end{ytableau}$ and the outputs are $R(y)=0=R^{yx}(z)$ and $R^{y}(x) = y$. So we have $R^{xyz}=R^{yxz}$ and 
also $0\cdot 0\cdot y \sim 0 \cdot y \cdot 0$.

\smallskip \noindent
\textbf{Case (1B):}  $x\leqslant M <z\leqslant y$. In this case, we have $R^{xy}=R^{yx}$ and $R(x)=R^y(x)$ since $y$ 
is just appended to the end of $R$ and does not influence how $x$ is inserted. This gives $R^{xyz}=R^{yxz}$. The 
related outputs are $R^x(y)=R(y)=0$, $R^{xy}(z)=R^{yx}(z)=y$. Thus, $R(x)\cdot 0 \cdot y \sim 0\cdot R(x) \cdot y$.

\smallskip \noindent
\textbf{Case (1C):}  $x <z\leqslant M<y$. In this case, we also have that $R^{xy}=R^{yx}$ and $R(x)=R^y(x)$, for the 
same reason as case (1B). Thus, we have $R^{xyz}=R^{yxz}$ and $R^{xy}(z)=R^{yx}(z)$. Since we have
$R^x(y)=R(y)=0$, $R(x)\cdot 0 \cdot R^{xy}(z) \sim 0\cdot R^y(x) \cdot R^{yx}(z)$.

\smallskip \noindent
\textbf{Case (1D):}  $x <z\leqslant y\leqslant M$. If $x$ is the maximal letter in $R^x$, then it follows as case (1B). Otherwise, this case needs further separation into subcases.

\smallskip \noindent
\textbf{Case 1D-(i):} $x,y\notin R$. Then $x<R(x), y<R(y)$ and $R(x)\neq y$. 

\noindent
\textbf{(1)} If $R(x)<y$, then $R^x(y)=R(y)$ and $R^y(x)=R(x)$, which implies $R^{xy}=R^{yx}$, thus $R^{xyz}=R^{yxz}$ 
and $R^{xy}(z)=R^{yx}(z)$. Hence $R(x)R^x(y)R^{xy}(z) = R^y(x)R(y)R^{yx}(z)$. Since $R(x)<R^{xy}(z)\leqslant y<R(y)$,
we have $R(y)>R^y(x)+1$ and for the outputs $R(x)R^x(y)R^{xy}(z)=R^y(x)R(y)R^{yx}(z)  \sim R(y)R^y(x)R(y)R^{yx}(z)$ 
by move type (I1) or (II1).

\noindent
\textbf{(2)} If $R(x)>y$, let the letter to the right of $R(x)$ in $R$ be $R(x)^{\rightarrow}$. Then both $R^{xyz}$ 
and $R^{yxz}$ are obtained by replacing $R(x)$ with $x$ and $R(x)^{\rightarrow}$ with $z$. For the output, we have 
$R(x)=R(y)$, $R^x(y)>R(y)$, $R^y(x)=y$, $R^{yx}(z)=R^x(y)$ and $R^{xy}(z)=y$. Since $y< R(x)<R^x(y)$, we have that 
$R^x(y)=R(x)^{\rightarrow}>y+1$. Hence the outputs $R(x)R^x(y)R^{xy}(z) = R(x)R(x)^{\rightarrow}y 
\sim R(x) y R(x)^{\rightarrow}= R(y)R^y(x)R^{yx}(z)$ by move of type (I2).

\smallskip \noindent
\textbf{Case 1D-(ii):} $x\in R,y\notin R$. Then $R(x)\leqslant x, R(y)>y$ and $x+1\notin R$. In this case, we have 
$R^x(y)=R(y)$ and $R^y(x)=R(x)$, thus $R^{xy}=R^{yx}$, $R^{xy}(z)=R^{yx}(z)$ and $R^{xyz}=R^{yxz}$. Since 
$x+1\notin R$, we have $R^{xy}(z)>x+1$. This implies $R(x)\leqslant x < R^{xy}(z)\leqslant y<R(y)$, thus 
$R(x)R^x(y)R^{xy}(z) \sim R(y)R^y(x)R^{yx}(z)$ as it is a type (I1) move.

\smallskip \noindent
\textbf{Case 1D-(iii):} $x\notin R, y\in R$. Then $x<R(x)$, $y\geqslant R(y)$, $y+1\notin R$, $R(x)-1\notin R$, 
$R(x)\leqslant y$, $R(y)\leqslant R^x(y)$ and $R^y=R$. 

\noindent
\textbf{(1)} If $R(x)=y$, denote the box to the right of $y$ in $y$ as $y^{\rightarrow}$. Note that $y^{\rightarrow}>y+1$. 
Then $R^x(y)=y^{\rightarrow}$, $R^{xy}(z)=y$, $R^y(x)=y$ and $R^{yx}(z)=y^{\rightarrow}$. Note $y-1\notin R$, otherwise 
$R(x)\leqslant y-1$. Thus, $R(y)=y$. Both $R^{xyz}$ and $R^{yxz}$ are obtained by replacing $y\in R$ with $x$ and
$y^{\rightarrow}$ with $z$, so $R^{xyz}=R^{yxz}$. The outputs $R(x)R^x(y)R^{xy}(z) = y y^{\rightarrow} y\sim 
y y y^{\rightarrow} = R(y)R^y(x)R^{yx}(z)$ as it is a type (II2) move.

\noindent
\textbf{(2)} Suppose $R(x)<y$ and $R(x)=R(y)$. Then $[R(x),y]\subset R$ and $R^x(y)=R(x)+1$. Since $R^y=R$ and 
$R^{xy}=R^x$, we have that both $R^{xy}$ and $R^{yx}$ equal $R^x$ and furthermore $R^y(x)=R(x)$. Note that $z$ can 
either be equal to $y$ or $z<R^x(y)$, otherwise $z\in R^{xy}$ and $z+1\in R^{xy}$, which will give us a braid from 
$\row(R^{xy})\cdot z$. Thus, we have $R^{xy}(z)=R^{yx}(z) = R(x)+1$. In either case, the outputs are
$R(x)R^x(y)R^{xy}(z) = R(x)(R(x)+1)(R(x)+1) \sim R(x) R(x) (R(x)+1) = R(y)R^y(x)R^{yx}(z)$ as they are 
type (III) moves.

\noindent
\textbf{(3)} Suppose $R(x)<y$ and $R(x)<R(y)$. Then $R(y)>R(x)+1$ and $R^x(y)=R(y)$. Similar to the previous case, both 
$R^{xy}$ and $R^{yx}$ are equal to $R^x$, and $z$ is either $y$ or $z<R(y)$. In either case, $R^{xy}(z)\leqslant R(y)$. 

Then the outputs are $R(x)R^x(y)R^{xy}(z)= R(x)R(y)R^x(z) \sim R(y)R(x)R^x(z) = R(y)R^y(x)R^{yx}(z)$ as they are 
type (I1) or (II1) moves.

\smallskip \noindent
\textbf{Case 1D-(iv):} $x, y\in R$. In this case $x\geqslant R(x)$, $y\geqslant R(y)$, $x+1\notin R$ and $y+1\notin R$.  
Since $x+1\notin R$, $[x,y]$ is not contained in $R$ and hence $R(y)>x+1>x\geqslant R(x)$.

Then $R^x(y)=R(y)$, $R^y(x)=R(x)$ and $R^{xy}=R^{yx}=R$. Since $z>x$ and $x+1\notin R$, 
we have $R(z)>x+1\geqslant R(x)+1$. By similar reasons to the previous two subcases of Case 1D-(iii), $z$ can either 
be $y$ or $z<R(y)$ in 
order to avoid a braid in $\row(R^{xy}) z$. So, we have $R^{xy}(z)\leqslant R(y)$. Then the outputs are 
$R(x)R^x(y)R^{xy}(z)=R(x)R(y)R(z) \sim R(y)R(x)R(z)=R(y)R^y(x)R^{yx}(z)$ as they are type (I1) moves.

\smallskip \noindent 
\textbf{2. Cases (I2) and (II2):} We have $z < x<y$, or $z=x<y$ and $y>z+1$. In both cases $x'=x, y'=z, z'=y$. 
By definition, $x\in R^x$.

\smallskip \noindent
\textbf{Case (2A):}  $M< x < y$, then $R(x)=R^x(y)=0$. $R^{xy}=\ytableausetup{notabloids} 
\begin{ytableau}
R & x & y
\end{ytableau}$ is obtained by appending $x$ and $y$ to the end of $R$. Since $x\in R^x$ and $z\leqslant x<y$, we 
have $R^{xy}(z)=R^x(z)$. Moreover, $R^{xzy}$ is obtained by appending $y$ to the end of $R^{xz}$ and hence 
$R^{xyz}=R^{xzy}$. The outputs are $R(x)R^x(y)R^{xy}(z)=0 0 R^x(z) \sim 0 R^x(z) 0= R(x)R^x(z)R^{xz}(y)$.

\smallskip \noindent
\textbf{Case (2B):}  $z\leqslant x \leqslant M < y$, then $R^x(y)=R^{xz}(y)=0$. Since $R^{xy}=\ytableausetup{notabloids} 
\begin{ytableau}
R^x & y
\end{ytableau}$, $x\in R^x$ and $z\leqslant x$, we have $R^{xy}(z)=R^x(z)$, thus $R^{xyz} = \ytableausetup{notabloids} 
\begin{ytableau}
R^{xz}  & y
\end{ytableau} = R^{xzy}$. The output $R(x)R^x(y)R^{xy}(z) = R(x) 0 R^x(z) \sim R(x) R^{x}(z)0=R(x)R^x(z)R^{xz}(y)$.

\smallskip \noindent
\textbf{Case (2C):}  $z\leqslant x < y\leqslant M$, then we have $R^x(z)\leqslant x$. We discuss the following subcases.

\smallskip \noindent
\textbf{Case 2C-(i):} $x,y\notin R$, then we have $R(x)>x$ and $R^x(y)>y$. Since $y>x$ and $x$ replaces $R(x)$ in 
$R$, we have $R^x(y)>R(x)$ from row strictness. Since $R^x(y)>R(x)$ and $R^x(z)\leqslant x$,  
we have $R^{xy}(z)=R^x(z)$ and $R^{xz}(y)=R^x(y)$. Furthermore, $R^{xyz}=R^{xzy}$. Moreover, we have 
$R^x(z)\leqslant x<R(x)<R^x(y)$, which implies $R^x(y)>R^x(z)+1$. Hence 
$R(x)R^x(y)R^{xy}(z) = R(x)R^x(y)R^x(z) \sim R(x)R^x(z)R^{x}(y) = R(x)R^x(z)R^{xz}(y)$ by type (I2) moves.

\smallskip \noindent
\textbf{Case 2C-(ii):} $x\in R, y\notin R$. Then $R^x=R$, $R(x)\leqslant x, R^x(y)>y$. Since $z\leqslant x$ and 
$[R(x),x]\subset R^x$, we have that $R^x(z)\leqslant R(x)$. Since $R^x(y)>y>x$ and $R^x(z)\leqslant R(x)$, we have 
that $R^{xy}(z)=R^{x}(z)$ and $R^{xz}(y)=R^x(y),$ thus $R^{xyz}=R^{xzy}$. Since $R^x(z)\leqslant R(x)\leqslant 
x<y<R^x(y)$, we have $R^x(y)>R^x(z)+1$. The outputs are $R(x)R^x(y)R^{xy}(z) = R(x)R^x(y)R^x(z) \sim 
R(x)R^x(z)R^x(y)=R(x)R^x(z)R^{xz}(y)$ by type (I2) or (II2) moves.

\smallskip \noindent
\textbf{Case 2C-(iii):} $x\notin R, y\in R$. Then $R^{xy}=R^x$, $R(x) > x$ and $R^x(y)\leqslant y$. Let the letter to the 
right of $R(x)$ in $R$ be $R(x)^{\rightarrow}$. Then $R(x)^{\rightarrow}>R(x)>x$ implies $R(x)^{\rightarrow}>x+1$. 
This also shows that $x+1\notin R^x$ and thus $R^x(y)>R(x)$. Since $R^{xy}=R^x$ and $R^{xzy}=R^{xz}$, we have 
$R^{xyz}=R^{xz}=R^{xzy}$. Since $R^x(z)\leqslant x<R(x)<R^x(y)$, we have $R^x(y)>R^x(z)+1$. Since $z\leqslant x$, 
we also have that $R^{x}(y)=R^{xz}(y)$.  Thus, the outputs are $R(x)R^x(y)R^{xy}(z) = R(x) R^x(y)R^x(z) \sim 
R(x)R^x(z)R^{x}(y)=R(x)R^x(z)R^{xz}(y)$ by a type (I2) move.

\smallskip \noindent
\textbf{Case 2C-(iv):} $x\in R, y\in R$. Then $R^x=R$, $R^{xy}=R$, $R(x)\leqslant x$, $R^x(y)\leqslant y$, $x+1\notin R$ 
and $y+1\notin R$. Thus, $R^x(y)>x+1$.  Since $z\leqslant x$ and $[R(x),x]\subset R^x$, we have that 
$R^x(z)\leqslant R(x)$. Since $R^{xy}=R$, $R^{xyz}=R^z$. Since $R^x(z)\leqslant x$, $R^{xz}(y)=R^z(y)=R(y)$ 
and thus $R^{xzy}=R^{z}$. This implies $R^{xyz}=R^{xzy}$. Now we have $R(z)\leqslant R(x)\leqslant x<x+1<R(y)$.
Therefore, the outputs are $R(x)R^x(y)R^{xy}(z) = R(x)R(y)R(z) \sim R(x)R(z)R(y)=R(x)R^x(z)R^{xz}(y)$ by type (I2) 
or (II2) moves.

\smallskip \noindent 
\textbf{3. Case (III):} We have $y=x$, $z=x+1$ and hence $x'=x$, $y'=x+1$ and $z'=x+1$.

\smallskip \noindent
\textbf{Case (3A):} $x>M$. Then $R^x$ is obtained by appending $x$ to the end of $R$ and $R(x)=0$. Also $R^{xx}=R^x$ 
with output $R^x(x)$. Note $R^{x,x+1}(x+1)=R^x(x)$. 
Both $R^{xx,x+1}$ and $R^{x,x+1,x+1}$ are obtained by appending 
$x+1$ to the end of $R^x$, thus they are the same. The outputs are $R(x)R^x(x)R^{xx}(x+1)=0 R^x(x) 0
\sim 0 0 R^x(x)= R(x)R^x(x+1)R^{x,x+1}(x+1)$.

\smallskip \noindent
\textbf{Case (3B):} $x\leqslant M, x+1>M$. Both $R^{xx,x+1}$ and $R^{x,x+1,x+1}$ are obtained by appending $x+1$
to the end of $R^x$, so they are equal. Since $x\in R^x$, we have $R^{x,x+1}(x+1)=R^{x}(x)$. Thus, the outputs are 
$R(x)R^x(x)R^{xx}(x+1)=R(x)R^x(x) 0 \sim R(x) 0 R^{x,x+1}(x+1)$.

\smallskip \noindent
\textbf{Case (3C):} $x+1\leqslant M$. It is clear that $x\in R^x$. If $x$ is the maximal letter in $R^x$, then the rest follows 
as case (3B).

Otherwise, let $x^{\rightarrow}$ be the letter to the right of $x$ in 
$R^x$. Since $x\in R^x$, we must have $x+1\notin R^x$, thus $x^{\rightarrow}>x+1$. Moreover, we have $R^{xx}=R^x$, 
$R^x(x+1)=R^{xx}(x+1)=x^{\rightarrow}$.
Since $R^{x,x+1}$ is obtained from $R^x$ by replacing $x^{\rightarrow}$ with $x+1$ and $x,x+1\in R^{x,x+1}$, we have 
$R^{x,x+1}(x+1)=R^x(x)$. Both $R^{xx,x+1}$ and $R^{x,x+1,x+1}$ are obtained from $R^x$ by replacing $x^{\rightarrow}$
with $x+1$, thus they are the same. Furthermore, since $R^x(x)\leqslant x$ and 
$x^{\rightarrow}>x+1$, we have that $R(x)R^x(x)R^{xx}(x+1)=R(x)R^x(x) x^{\rightarrow} \sim 
R(x) x^{\rightarrow}R^{x}(x) = R(x)R^x(x+1)R^{x,x+1}(x+1)$ by a type (I2) or (II2) move.
\end{proof}

\begin{proposition} \label{prop: insertion}
If two words in $\mathcal{H}_0(n)$ have the property that their reverse words are equivalent according to 
Definition~\ref{def: equiv}, then they have the same insertion tableau under $\star$-insertion (inserted from right to left).
\end{proposition}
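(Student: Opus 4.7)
The plan is to reduce the proposition, by transitivity of $\sim$, to the following special case: the two words share a prefix and a suffix and differ only in a window of three consecutive letters of the reverse word, where the two triples $xyz$ and $x'y'z'$ are related by a single micro-move from Definition~\ref{def: equiv}. After inserting the common prefix (in reverse, i.e., from right to left) we obtain a common insertion tableau $P$; the remaining suffix insertions agree trivially once we show that inserting $xyz$ or $x'y'z'$ into $P$ yields the same tableau. Hence everything reduces to proving $P \leftarrow xyz = P \leftarrow x'y'z'$ for a single micro-move.

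I would prove this by induction on the number of rows of $P$. The base case is Lemma~\ref{lema: single-row} itself: when $P$ has a single row $R$, it states that $R^{xyz} = R^{x'y'z'}$ and that the outputs $R(x)R^x(y)R^{xy}(z)$ and $R(x')R^{x'}(y')R^{x'y'}(z')$ are $\sim$-equivalent, where $0$'s appear in positions where no bumping occurred. For the inductive step, apply Lemma~\ref{lema: single-row} to the bottom row of $P$. Its bottom row then matches on both sides, and the outputs propagating upward either form micro-move equivalent length-three triples---in which case the inductive hypothesis applies to the sub-tableau $P'$ above (which has one fewer row)---or contain one or more $0$'s, in which case a case-by-case inspection of the proof of Lemma~\ref{lema: single-row} shows that the nonzero outputs on the two sides coincide as ordinary words, so the remaining insertions into $P'$ are literally identical.

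Throughout the induction the fully-commutative hypotheses of Lemma~\ref{lema: single-row} are preserved: micro-moves preserve $\equiv_{\mathcal{H}_0}$ by inspection of Definition~\ref{def: equiv}, and Lemma~\ref{lem: row.reading.eq} guarantees that the row reading of any intermediate insertion tableau is equivalent in $\mathcal{H}_0(n)$ to the word that produced it. Hence every downstream application of Lemma~\ref{lema: single-row} is valid, and the insertions remain within the regime where the single-row lemma applies.

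The main obstacle is a bookkeeping verification: in every case of Lemma~\ref{lema: single-row}'s proof where the output sequences contain $0$'s (so fewer than three letters propagate upward), the \emph{nonzero} parts of $R(x)R^x(y)R^{xy}(z)$ and $R(x')R^{x'}(y')R^{x'y'}(z')$ must be identical as ordinary words and not merely related via the identifications $u \cdot 0 \sim u \sim 0 \cdot u$ with $u$ differing on the two sides. Without this equality, short outputs would require a separate inductive treatment. Inspecting the enumerated cases (which show the number of $0$'s matches on both sides, e.g., Cases 1A, 1B, 1C, 2A, 2B, 3A, 3B), this literal equality of short outputs holds, and the induction then goes through without incident, proving the proposition.
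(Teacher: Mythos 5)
Your proof is correct and takes essentially the same approach as the paper: reduce by transitivity to a single micro-move on three consecutive letters of the reverse word, then propagate row by row using Lemma~\ref{lema: single-row}, which is exactly how the paper argues (phrased there as an iteration over the rows $R_1,\dots,R_\ell$ rather than an induction on the number of rows). The zero-output bookkeeping you flag is handled in the paper implicitly by the convention $x\cdot 0\sim x\sim 0\cdot x$, and your case-by-case verification that the nonzero outputs then coincide literally is consistent with the cases in the proof of Lemma~\ref{lema: single-row}.
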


\begin{proof}
Let $P$ be a $\star$-insertion tableau. By Lemma~\ref{lem: star.insertion}, $P^{t}$ is a semistandard tableau. Let the 
rows of $P$ be $R_1,\dots, R_\ell$. Then each row is strictly increasing. The row $R_j$ is considered to be empty
for $j>\ell$.
 
Let $x_1,y_1,z_1$ and $x_1',y'_1,z'_1$ be letters such that $x_1y_1z_1 \sim x'_1y'_1z'_1$ and
$\row(P)\cdot x_1 \cdot y_1 \cdot z_1$ is fully-commutative. Let the output of the $\star$-insertion algorithm of 
$P\leftarrow x_1 \leftarrow y_1 \rightarrow z_1$ (resp. $P\leftarrow x'_1 \leftarrow y'_1 \leftarrow z'_1$) from the row 
$i$ be $x_{i+1},y_{i+1},z_{i+1}$ (resp. $x'_{i+1},y'_{i+1},z'_{i+1}$). That is:
\begin{itemize}
\item
$R^{x_iy_iz_i}_i$ is the first row of $[(R_i\leftarrow x_i) \leftarrow y_i] \leftarrow z_i$ and the outputs in order are 
$x_{i+1},y_{i+1},z_{i+1}$. 
\item
$R_i^{x'_iy'_iz'_i}$ is the first row of $[(R_i\leftarrow x'_i) \leftarrow y'_i] \leftarrow z'_i$ and outputs in order are 
$x'_{i+1},y'_{i+1},z'_{i+1}$. 
\end{itemize}
By Lemma \ref{lema: single-row}, we have that $R^{x_iy_iz_i}_i = R_i^{x'_iy'_iz'_i}$ and 
$x_{i+1}y_{i+1}z_{i+1} \sim x'_{i+1}y'_{i+1}z'_{i+1}$ for all $i$ (possibly some extra rows exceeding $\ell$). 
Thus, we have the desired result.
\end{proof}

\begin{example}
The four words in $\mathcal{H}_0(5)$ of Example~\ref{example.Hecke words} all have the same $\star$-insertion tableau:
\[
\raisebox{1cm}{
\ytableausetup{notabloids} 
\begin{ytableau}
3 & \none & \none \\
1 & \none & \none \\
1 & 2 & 4
\end{ytableau}
}\,.
\]
\end{example}

In the next couple of lemmas, we prove that the crystal operators $f^{\star}_k$ act by a composition of micro-moves
as given in Definition \ref{def: equiv}. More precisely, for a fully-commutative decreasing factorization $\mathbf{h}$, we have 
$\mathbf{h}^{\mathsf{rev}} \sim f_k^\star(\mathbf{h})^{\mathsf{rev}}$ as long as $f_k^\star(\mathbf{h})\neq 0$, where 
$\mathbf{h}^{\mathsf{rev}}$ is the reverse of $\mathbf{h}$. 

\begin{remark}
\label{remark.fstar action}
By Definition~\ref{def: star} and Remark~\ref{rm: star}, there are two cases for the $k$-th and $(k+1)$-st factors
under the crystal operator $f_k^\star$, where $x$ is the largest unpaired letter in the $k$-th factor, $w_i,v_i>x$ and 
$u_i,b_i<x$:
\begin{enumerate}
	\item $(w_1\dots w_p u_1 \dots u_q)(v_1\dots v_s x b_1 \dots b_t)\xrightarrow{f^{\star}_k} 
	(w_1\dots w_p x u_1 \dots u_q)(v_1\dots v_s b_1 \dots b_t)$,\\
	 where  $v_s \neq x+1$.
	\item $(w_1\dots w_p u_1 \dots u_q)(v_1\dots v_s x b_1 \dots b_t)\xrightarrow{f^{\star}_k} 
	(w_1\dots w_{p} x u_1 \dots u_q)(v_1\dots v_{s-1} x b_1 \dots b_t)$, \\
	where $v_s=w_p=x+1$.
\end{enumerate}
In both cases, $u_i<x-1$ since if $u_1=x-1$ then $b_1=x-1$ due to the fact that $x$ is unbracketed; but this would 
mean that the word is not fully-commutative. We also notice that since all $u_i$ are paired with some $b_j$, we 
have that $t\geqslant q$ and $b_i\geqslant u_i$. Similarly, all $v_i$ are paired with some $w_j$, so we have that 
$p\geqslant s$ and $v_i\geqslant w_{p-s+i}$.
Let $u$ denote the sequence $u_1\dots u_q$ and let $b$ denote the 
sequence $b_1\dots b_t$.
\end{remark}

\begin{lemma} \label{lem: far}
\mbox{}
\begin{enumerate}
\item
For $2\leqslant i\leqslant q$, $b_{i-1}>u_i+1$. 
\item
For $1\leqslant i <s$, $v_i>w_{p-s+i+1}+1$.
\end{enumerate}
\end{lemma}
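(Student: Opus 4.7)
The plan is to prove both statements by contradiction using the fully-commutativity of $\mathbf{h}$; the two arguments are symmetric, exchanging the roles of the pairs $(u, b)$ and $(w, v)$.

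For (1), I would suppose that $b_{i-1} \le u_i + 1$ for some $i \in \{2, \dots, q\}$ and take such an $i$ \emph{maximal}. A preliminary analysis of the pairing shows that each $u_j$ must pair with some $b_k$ satisfying $b_k \ge u_j$ (otherwise it would pair with $x$, contradicting that $x$ is unpaired). Processing the $u$'s in decreasing order, a greedy count gives $r_j \ge j$, where $r_j$ is the largest index $k$ with $b_k \ge u_j$. Specializing at $j = i - 1, i$ yields $b_{i-1} \ge u_{i-1}$ and $b_i \ge u_i$. Combined with the standing assumption and $u_{i-1} > u_i$, this forces $b_{i-1} = u_{i-1} = u_i + 1$ and $b_i = u_i$. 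Maximality of $i$ (when $i < q$) gives $b_i > u_{i+1} + 1$, hence $u_{i+1} \le u_i - 2$, and therefore all of $u_{i+1}, \dots, u_q$ are at most $u_i - 2$. The main step is then to produce a braid in $\mathbf{h}$: in the word $h^{k+1} h^k = w_1 \dots w_p\, u_1 \dots u_q\, v_1 \dots v_s\, x\, b_1 \dots b_t$, the letters strictly between $u_i$ and $b_{i-1}$ are $u_{i+1}, \dots, u_q, v_1, \dots, v_s, x, b_1, \dots, b_{i-2}$. Each differs from $u_i$ by at least $2$: the $u$'s by the bound just established; $x \ge u_i + 2$ from the Remark~\ref{remark.fstar action} fact that $u_i < x - 1$; each $v_j \ge x + 1 \ge u_i + 3$; and each $b_j > b_{i-1} = u_i + 1$ for $j < i - 1$. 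Hence $u_i$ commutes past every intermediate letter, so $\mathbf{h}$ is equivalent modulo $\equiv_{\mathcal{H}_0}$ to a word containing the consecutive subword $u_i b_{i-1} b_i = y(y+1)y$ with $y = u_i$, a braid, contradicting fully-commutativity.

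For (2), I would apply the symmetric argument: assume $v_i \le w_{p-s+i+1} + 1$ for some $i \in \{1, \dots, s - 1\}$ and take it maximal. The analogous greedy analysis on the $(w, v)$-pairing together with the Remark's fact $v_j \ge w_{p-s+j}$ yields $v_i = w_{p-s+i} = w_{p-s+i+1} + 1$ and $v_{i+1} = w_{p-s+i+1}$. Then, using $u_j < x < v_s \le v_{i+1}$ to conclude $u_j \le w_{p-s+i+1} - 2$, using $v_j \ge v_i + 1 = w_{p-s+i+1} + 2$ for $j < i$, and using maximality for the trailing $w$'s when $i + 1 < s$, one verifies that $w_{p-s+i+1}$ commutes past all intermediate letters and slides rightward to sit adjacent to $v_i$, producing the consecutive braid $w_{p-s+i+1}\, v_i\, v_{i+1} = z(z+1)z$ with $z = w_{p-s+i+1}$.

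The main obstacle will be the bookkeeping in the commutation step: a single intermediate letter differing from the moving letter by exactly $1$ would block the braid construction. The maximality of $i$ plays the essential role of excluding precisely this situation at the ``next'' index --- $u_{i+1} = u_i - 1$ in part (1), and $w_{p-s+i+2} = w_{p-s+i+1} - 1$ in part (2) --- allowing the argument to close.
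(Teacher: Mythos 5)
Your proof is correct. Both you and the paper first use the pairing inequalities $b_j\geqslant u_j$ (resp.\ $v_j\geqslant w_{p-s+j}$) from Remark~\ref{remark.fstar action} to reduce to the tight configuration $b_i=u_i$, $b_{i-1}=u_{i-1}=u_i+1$, and then contradict full commutativity by commuting a single letter across the middle of $h^{k+1}h^k$ to expose a consecutive braid; the difference lies in how that braid is built. The paper takes the maximal interval $[a,c]\subseteq b$ with $a=u_i$, shows $[a,c]\subseteq u$ as well, and slides the letter $c$ of $b$ \emph{leftward} (possible because $c+1\notin b$, and $x\geqslant c+2$ since $u_1<x-1$) until it meets $c,c-1$ in $u$, producing $c(c-1)c$. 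You instead choose the violating index $i$ \emph{maximal}, which forces $u_{i+1}\leqslant u_i-2$, and slide $u_i$ itself \emph{rightward} until it meets $b_{i-1}b_i$, producing $u_i(u_i+1)u_i$. Both devices serve the same purpose --- excluding the one intermediate letter that could differ from the moving letter by exactly $1$ --- but yours avoids the auxiliary interval and the claim $[a,c]\subseteq u$, so it is slightly more direct. Your symmetric treatment of part (2), including the observation that the trailing $w$'s past $w_{p-s+i+1}$ exist only when $i+1<s$ (and are then controlled by maximality), is also correct.
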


\begin{proof}
\noindent
\textbf{(1):} When $b_{i-1}>b_i+1$ or $u_i<b_i$, the result follows directly. 
	
Consider the case that  $u_i=b_i=a$ and $b_{i-1}=b_i+1=a+1$ for some letter $a$. Since 
$a=u_i<u_{i-1}\leqslant b_{i-1}=a+1$, we must have $u_{i-1}=a+1$. Let $c$ be the largest letter such that 
$[a,c]\subseteq b$. Then $c\geqslant a+1$ and $c+1\notin b$. Moreover, since all $u_i$ are paired, $u_i\leqslant b_i$ 
and  $u_{j-1}>u_j$, it is not hard to see that $[a,c]\subseteq u$ and $c,c-1\in u$. Since $c+1\notin b$, we can use 
commutativity to move $c\in b$ to the left and obtain a subword $c(c-1)c$, which contradicts that the original word 
is fully-commutative.

\smallskip \noindent
\textbf{(2):} The proof is almost identical to the first part. When $w_{p-s+i}>w_{p-s+i+1}+1$ or $v_i>w_{p-s+i}$, the 
result follows.

Consider the case $w_{p-s+i}=w_{p-s+i+1}+1=a+1$ and $v_i=w_{p-s+i}=a+1$ for some letter $a$. Since 
$a=w_{p-s+i+1}\leqslant v_{i+1}<v_i=a+1$, we must that $v_{i+1}=a$. Let $c$ be the smallest letter such that 
$[c,a+1]\subseteq w$. Then $c\leqslant a$ and $c-1\notin w$. Moreover, since all $v_i$ are paired, 
$v_j\geqslant w_{p-s+j}$ and $v_{j+1}<v_j$, we can see that $[c,a+1]\subseteq v$ and $c,c+1\in v$. 
Since $c-1\notin w$, 
we can use commutativity to move $c\in w$ to the right and form a subword $c(c+1)c$, which contradicts that the original 
word is fully-commutative.
\end{proof}

We now summarize several observations that will be used later. 
\begin{remark} \label{remark.both}
For both types of actions of $f_k^\star$ as in Remark~\ref{remark.fstar action}, we have the following equivalence relations:
	\begin{enumerate}
		\item For $1\leqslant i \leqslant q, 1\leqslant j \leqslant s-1, \, v_{j+1}v_ju_i \sim v_{j+1}u_iv_j$, \, 
		since $u_i<v_{j+1}<v_j$.
		\item For $1\leqslant i \leqslant q, \, xv_su_i \sim xu_iv_s$, \, since $u_i<x<v_s$.
		\item For $1\leqslant i \leqslant q, \,b_1xu_i \sim b_1u_ix$, \, since $u_i\leqslant u_1\leqslant b_1<x$, 
		and $u_i<x-1$.
		\item For $1\leqslant j< i-1, \,1\leqslant i \leqslant q, \, b_{j+1}b_ju_i \sim b_{j+1}u_ib_j$, \, 
		since $u_i \leqslant b_i< b_{j+1}<b_j$.
		\item For $2\leqslant i \leqslant q, \, b_ib_{i-1}u_i \sim b_iu_ib_{i-1}$, \, since $u_i\leqslant b_i<b_{i-1}$  
		and $b_{i-1}>u_i+1$ by Lemma \ref{lem: far}.
		\item For $1\leqslant i\leqslant s, \, p-s+i-1\leqslant j\leqslant p-1, \, w_{j+1}v_iw_j \sim v_iw_{j+1}w_j$, \, 
		since $w_{j+1}<w_j<w_{p-s+i}\leqslant v_i$.
		\item For $1\leqslant i\leqslant s-1, w_{p-s+i+1}v_iw_{p-s+i} \sim v_iw_{p-s+i+1}w_{p-s+i} $, \, since 
		$w_{p-s+i+1}<w_{p-s+i}\leqslant v_i$ and $v_i>w_{p-s+i+1}+1$ by Lemma \ref{lem: far}.
		\item For all $1\leqslant j \leqslant s-1, \, 1\leqslant i \leqslant q, v_{j+1}u_iv_j \sim v_{j+1}v_ju_i$, \, 
		since $u_i<v_{j+1}<v_j$.
		\item For $1< i\leqslant q, \, b_1u_iv_s \sim b_1v_su_i$, \, since $u_i<u_1\leqslant b_1<v_s$.
		\item For $1\leqslant i\leqslant q,\, 1\leqslant j \leqslant s$, $xu_iv_j\sim xv_ju_i$,\, since $u_i<x<v_j$.
		\item For $1\leqslant j\leqslant s-1, \, xv_jw_p \sim v_jxw_p$, \, since $x<w_p\leqslant v_s <v_j$.
	\end{enumerate}
\end{remark}

\begin{remark} \label{remark.simple}
	When $v_s\neq x+1$, we have the following equivalence relations:
	\begin{enumerate}
		\item $1\leqslant i\leqslant s, \, xv_iw_p \sim v_ixw_p$, \, since $x<w_p\leqslant v_s$ and $v_s>x+1$. 
		\item $b_1u_1v_s \sim b_1v_su_1$, \, since $u_1\leqslant b_1 <v_s$ and $v_s>x+1>u_1+1$.
	\end{enumerate}
\end{remark}

\begin{lemma} \label{lem: mv1}
	We have that $b_q\dots b_1 x v_s\dots v_1 u_q \dots u_1$ is equivalent to 
	$b_qu_q\dots b_2u_2b_1u_1 xv_s\dots v_1$.
\end{lemma}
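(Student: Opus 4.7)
I will prove the lemma by induction on $q$. The base case $q = 0$ is the tautology $x v_s\dots v_1 \sim x v_s\dots v_1$. For the inductive step, it suffices to show that $u_q$ can be slid leftward through the word
\[
b_q b_{q-1}\dots b_1\, x\, v_s\dots v_1\, u_q u_{q-1}\dots u_1
\]
by a sequence of micro-moves until it lies between $b_q$ and $b_{q-1}$ (or, when $q=1$, immediately after $b_1$), producing the intermediate word
\[
b_q u_q\, b_{q-1}\dots b_1\, x\, v_s\dots v_1\, u_{q-1}\dots u_1.
\]
The suffix starting at $b_{q-1}$ is then the left-hand side of the lemma with $q$ replaced by $q-1$, and the inductive hypothesis applied to it yields the desired target $b_q u_q \dots b_1 u_1\, x\, v_s \dots v_1$.

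To move $u_q$ past $v_1,\dots,v_s$ and $x$, I apply Remark~\ref{remark.both}(1) successively for $j = 1,\dots, s-1$ (sliding $u_q$ past $v_1,\dots,v_{s-1}$), then Remark~\ref{remark.both}(2) to cross $v_s$, and finally Remark~\ref{remark.both}(3) to cross $x$; the last step uses the inequality $u_q < x - 1$ provided by Remark~\ref{remark.fstar action}. To move $u_q$ past $b_1,\dots,b_{q-1}$ (a vacuous task when $q=1$), I apply Remark~\ref{remark.both}(4) successively for $j = 1,\dots, q-2$ (each time using anchor $b_{j+1}$, valid because $j < q - 1$), and finally one application of Remark~\ref{remark.both}(5) with anchor $b_q$ to cross $b_{q-1}$; this last move relies on Lemma~\ref{lem: far}'s strict inequality $b_{q-1} > u_q + 1$.

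The main substance is the bookkeeping check that each micro-move's hypothesis is satisfied at its point of application. The potentially delicate cases occur when paired letters coincide—e.g.\ $u_q = b_q$, or $q=1$ with $u_1 = b_1$—but these are precisely the configurations for which the weak Knuth moves of type (II2) already built into items (3) and (5) of Remark~\ref{remark.both} are designed. Their strictness requirements ($u_q < x - 1$ and $b_{q-1} > u_q + 1$) are guaranteed by Lemma~\ref{lem: far} and the fully-commutativity hypothesis, so no additional case analysis is needed beyond invoking the items already at our disposal.
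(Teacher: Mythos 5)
Your proof is correct and is essentially the paper's own argument: the paper also slides each $u_i$ leftward past the $v$'s, $x$, and the smaller $b$'s using exactly the moves in Remark~\ref{remark.both}(1)--(5) (with Lemma~\ref{lem: far} supplying the strictness needed at the $b_{i-1}$ crossing), handling $u_q$ first and then repeating. Your induction on $q$ merely formalizes the ellipsis in the paper's displayed chain of equivalences.
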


\begin{proof}
With the equivalence relations from Remark~\ref{remark.both} (1)-(5), we can make the sequences of equivalence moves 
as follows:
\begin{align*}
b_q\dots b_1 x v_s\dots v_2v_1 u_qu_{q-1} \dots u_1 &\sim b_q\dots b_1 x v_s\dots  v_2u_qv_1u_{q-1} \dots u_1 \sim \\
b_q\dots b_1 x v_s u_q\dots  v_2v_1u_{q-1} \dots u_1 &\sim b_q\dots b_1 x u_q v_s\dots  v_2v_1u_{q-1} \dots u_1 \sim\\
b_q\dots b_1 u_q x v_s\dots  v_2v_1u_{q-1} \dots u_1 &\sim b_qu_q\dots b_1 x v_s\dots  v_2v_1u_{q-1} \dots u_1 \sim \\
b_qu_qb_{q-1}u_{q-1}\dots b_1u_1 x v_s\dots  v_2v_1. &
\end{align*}
\end{proof}

\begin{lemma} \label{lem: mv2}
We have that $v_s\dots v_1w_p\dots w_{p-s+1}$ is equivalent to $v_sw_pv_{s-1}w_{p-1}\dots v_1w_{p-s+1}$.
\end{lemma}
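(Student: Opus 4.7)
The plan is to mirror the strategy used in the proof of Lemma~\ref{lem: mv1}: iteratively push each $v_i$ rightward to its target slot, one index at a time. Concretely, for $i$ running from $1$ up to $s-1$, I would move $v_i$ past $w_p, w_{p-1}, \ldots, w_{p-s+i+1}$ so that after the $i$-th stage the word has the form
\[
v_s v_{s-1}\cdots v_{i+1}\; w_p w_{p-1}\cdots w_{p-s+i+1}\; v_i w_{p-s+i}\, v_{i-1} w_{p-s+i-1}\cdots v_1 w_{p-s+1}.
\]
After stage $s-1$, all $v_j$ with $j<s$ will have been slotted in, producing the desired interleaved word $v_s w_p v_{s-1} w_{p-1}\cdots v_1 w_{p-s+1}$.

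Each single rightward commutation of $v_i$ past an adjacent $w$-letter will be an application of either relation~(6) or relation~(7) of Remark~\ref{remark.both}. The intermediate commutations of $v_i$ past $w_{j+1}$ for $p-s+i+2 \leqslant j+1 \leqslant p$ are covered by (6), a Knuth move of type (I1) justified by the strict chain $w_{j+1} < w_j < w_{p-s+i} \leqslant v_i$. The final commutation, which moves $v_i$ past $w_{p-s+i+1}$, uses (7): it is a Knuth move of type (I1) when $v_i > w_{p-s+i}$ and a weak Knuth move of type (II1) in the boundary case $v_i = w_{p-s+i}$. In that boundary case, the spacing $v_i > w_{p-s+i+1}+1$ demanded by~(II1) is exactly the content of Lemma~\ref{lem: far}(2), so the fully-commutative hypothesis enters here in complete analogy with how Lemma~\ref{lem: far}(1) is used in the proof of Lemma~\ref{lem: mv1}.

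The only remaining task would be to verify the indexing bookkeeping at each stage: namely, that the relevant subword $v_i w_{j+1} w_j$ (or its boundary variant ending in $w_{p-s+i}$) is genuinely adjacent in the current word, and that the already-placed pairs $v_{i-1} w_{p-s+i-1}, \ldots, v_1 w_{p-s+1}$ sitting to the right of $v_i$ are untouched. Since every push on $v_i$ happens entirely to the left of those placed pairs, this check is immediate. I do not foresee any substantive obstacle beyond this mechanical verification; the main conceptual step, namely the appeal to Lemma~\ref{lem: far}(2) in the boundary case of relation~(7), is already isolated.
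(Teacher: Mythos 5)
Your proposal is correct and follows essentially the same route as the paper: push $v_1,v_2,\dots,v_{s-1}$ rightward in turn, using relation (6) of Remark~\ref{remark.both} for the intermediate swaps and relation (7) (with Lemma~\ref{lem: far}(2) supplying the gap $v_i>w_{p-s+i+1}+1$ needed for the boundary case $v_i=w_{p-s+i}$) for the final swap of each stage. The bookkeeping you flag is indeed immediate, so nothing substantive is missing.
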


\begin{proof}
With the equivalence relations from Remark~\ref{remark.both} (6)-(7), we can make the following equivalence moves:
\begin{align*}
v_s\dots v_{2}v_1w_pw_{p-1}\dots w_{p-s+1} &\sim v_s\dots v_{2} w_pw_{p-1}\dots w_{p-s+2}v_1w_{p-s+1} \sim \\
v_s\dots v_3w_pw_{p-1}\dots v_2w_{p-s+2}v_1w_{p-s+1} &\sim v_sw_p\dots v_1w_{p-s+1}.
\end{align*}
\end{proof}

\begin{lemma} \label{lem: mv5}
	 We have 
	 \[
	 	xw_pv_{s-1}w_{p-1}\dots v_2w_{p-s+2}v_1w_{p-s+1} \sim 
	 	v_{s-1}\dots v_1 x w_p \dots w_{p-s+1}.
	\]
\end{lemma}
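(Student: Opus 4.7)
The plan is to prove the claimed equivalence by induction on the number of $v_i$'s that have been moved to the left of $x$. Define the intermediate word
\[
W_k := v_{s-1} v_{s-2} \cdots v_{s-k} \cdot x \cdot w_p w_{p-1} \cdots w_{p-k} \cdot v_{s-k-1} w_{p-k-1} \cdots v_1 w_{p-s+1}
\]
for $0 \leqslant k \leqslant s-1$, so that $W_0$ is the left-hand side of the claim and $W_{s-1}$ is the right-hand side. (When $s = 1$, both sides equal $x w_p$ and there is nothing to prove.) The inductive step then reduces to showing $W_k \sim W_{k+1}$ for each $0 \leqslant k \leqslant s-2$.

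To pass from $W_k$ to $W_{k+1}$, I focus on the subword $x w_p w_{p-1} \cdots w_{p-k} v_j w_{p-k-1}$ of $W_k$, where $j := s-k-1$. The strategy is to first slide $v_j$ leftward past $w_{p-k}, w_{p-k+1}, \ldots, w_p$ via commutativity, and then apply a single Knuth move to swap $v_j$ past $x$. For each commutation I need $v_j > w_\alpha + 1$ for $p-k \leqslant \alpha \leqslant p$; this follows from Lemma~\ref{lem: far}(2) (taking $i = j$), which gives $v_j > w_{p-k} + 1$, combined with the fact that the $w$-sequence is strictly decreasing, so $w_\alpha \leqslant w_{p-k}$ for all $\alpha \geqslant p-k$. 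For the final swap past $x$, since $x < w_p < v_j$, micro-move (I1) of Definition~\ref{def: equiv} applied to $x v_j w_p$ produces $v_j x w_p$. Reassembling gives exactly the corresponding subword of $W_{k+1}$.

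The main thing to be careful about is the index bookkeeping: verifying at each step that the prefix of $w$'s absorbed to the right of $x$ grows by exactly one letter and that the index range $[p-k, p]$ over which $v_j$ commutes lines up with the hypothesis of Lemma~\ref{lem: far}(2). Beyond this there are no subtle Hecke moves to worry about — each inductive step uses only commutation relations plus one invocation of the Knuth move (I1). Full-commutativity is preserved automatically, since $\sim$-equivalence is defined on fully-commutative words and full-commutativity is a class invariant under $\equiv_{\mathcal{H}_0}$.
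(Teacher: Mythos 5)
Your overall route is the same as the paper's: peel off the $v_j$'s one at a time, sliding each leftward past the block $w_p\cdots w_{p-k}$ and then past $x$, and your index bookkeeping and the inequality $v_j>w_{p-k}+1$ from Lemma~\ref{lem: far}(2) are correct. However, there is a genuine gap in how you justify the slides past the $w_\alpha$'s. You invoke ``commutativity,'' i.e.\ the $0$-Hecke relation $pq=qp$ for $|p-q|>1$, but that is \emph{not} one of the generators of $\sim$ in Definition~\ref{def: equiv}. The relation $\sim$ is generated only by the listed three-letter micro-moves; a bare transposition of two distant adjacent letters is not in general a $\sim$-move (indeed $\sim$ is deliberately finer than $\equiv_{\mathcal{H}_0}$, since it must preserve the $\star$-insertion tableau via Proposition~\ref{prop: insertion}, and arbitrary commutations would not). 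So as written, each step ``slide $v_j$ past $w_\alpha$'' appeals to a relation you are not allowed to use, and checking $v_j>w_\alpha+1$ is the condition for the wrong move.

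The gap is repairable, and the repair is exactly what the paper does via Remark~\ref{remark.both}(6)--(7): each swap must be realized as a three-letter micro-move using the letter immediately to its right as a witness. Concretely, $w_\alpha\, v_j\, w_{\alpha-1}\sim v_j\, w_\alpha\, w_{\alpha-1}$ is the Knuth move (I1) whenever $w_\alpha<w_{\alpha-1}<v_j$, which holds for all $\alpha\geqslant p-k+1$; at the first swap $\alpha=p-k$ one may have $w_{p-k-1}=v_j$, in which case the move is the weak Knuth move (II1) and requires precisely $v_j>w_{p-k}+1$, supplied by Lemma~\ref{lem: far}(2). With the slides rewritten this way (and your final application of (I1) to $x\,v_j\,w_p$, which is fine since $x<w_p\leqslant v_s<v_j$), your induction goes through and coincides with the paper's argument.
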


\begin{proof}
With the equivalence relations from Remark~\ref{remark.both} (6),(7) and (11),
we can make the following equivalent moves:
\begin{align*}
xw_pv_{s-1}w_{p-1}\dots v_2w_{p-s+2}v_1w_{p-s+1} &\sim xv_{s-1}w_pw_{p-1}\dots v_2w_{p-s+2}v_1w_{p-s+1} \sim \\
v_{s-1}xw_pw_{p-1}\dots v_2w_{p-s+2}v_1w_{p-s+1} &\sim v_{s-1}\dots v_1xw_pw_{p-1}\dots w_{p-s+2}w_{p-s+1}.
\end{align*}
\end{proof}

\begin{lemma} \label{lem: mv3}
When $v_s\neq x+1$, we have 
\[
	xv_sw_pv_{s-1}w_{p-1}\dots v_1w_{p-s+1} \sim v_s\dots v_1 x w_p \dots w_{p-s+1}.
\]
\end{lemma}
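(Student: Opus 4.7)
The plan is to reduce this equivalence directly to the preceding Lemma~\ref{lem: mv5} by first peeling off the leading $v_s$ with a single micro-move. Observe that under the hypothesis $v_s\neq x+1$, combined with the fact $v_s>x$ (recall from Remark~\ref{remark.fstar action} that every $v_i$ is strictly larger than $x$), we obtain $v_s>x+1$. Furthermore, $w_p\leqslant v_s$ holds because the pairing forces $v_i\geqslant w_{p-s+i}$ for each $i$, specialized at $i=s$. Together with $x<w_p$, these are precisely the inequalities required to invoke Remark~\ref{remark.simple}(1) on the prefix $x v_s w_p$.

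Accordingly, the first step is the single micro-move
\[
	x v_s w_p v_{s-1} w_{p-1} \dots v_1 w_{p-s+1} \sim v_s x w_p v_{s-1} w_{p-1} \dots v_1 w_{p-s+1},
\]
which is an instance of Remark~\ref{remark.simple}(1). The second step applies Lemma~\ref{lem: mv5} to the suffix $x w_p v_{s-1} w_{p-1}\dots v_1 w_{p-s+1}$, giving
\[
	x w_p v_{s-1} w_{p-1} \dots v_1 w_{p-s+1} \sim v_{s-1}\dots v_1 x w_p \dots w_{p-s+1}.
\]
Since the micro-moves act locally, prepending $v_s$ to both sides yields the required equivalence when combined with step one.

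The only thing to double-check is that Remark~\ref{remark.simple}(1) really does require only $v_s>x+1$ (not $v_i>x+1$ for smaller $i$), which it does since its justification uses $w_p\leqslant v_s$ and $v_s>x+1$; everything else is inherited. I do not anticipate any real obstacle here: unlike Lemmas~\ref{lem: mv1}, \ref{lem: mv2}, and \ref{lem: mv5}, which require a cascading sequence of commutations, this statement is essentially a corollary obtained by composing one extra commutation with Lemma~\ref{lem: mv5}, and the hypothesis $v_s\neq x+1$ is exactly what enables that first commutation.
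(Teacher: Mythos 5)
Your proof is correct and follows essentially the same route as the paper: the paper's first step is exactly your application of Remark~\ref{remark.simple}(1) to the prefix $xv_sw_p$ (which is where the hypothesis $v_s\neq x+1$ enters), and its remaining steps are precisely the micro-move sequence used to prove Lemma~\ref{lem: mv5}, which you instead invoke as a black box. Your check that Lemma~\ref{lem: mv5} carries no hypothesis on $v_s$ (its proof uses only Remark~\ref{remark.both}(6),(7),(11)) is exactly what makes this reduction legitimate, and it yields a slightly more economical presentation than the paper's explicit chain of moves.
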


\begin{proof}
With the equivalence relations from Remark~\ref{remark.both} (6)-(7) and Remark~\ref{remark.simple} (1), we can 
make the following equivalence moves:
\begin{align*}
xv_sw_pv_{s-1}w_{p-1}v_{s-2}\dots v_1w_{p-s+1} &\sim v_sxw_pv_{s-1}w_{p-1}v_{s-2}\dots v_1w_{p-s+1} \sim \\
v_sxv_{s-1}w_pw_{p-1}v_{s-2}\dots v_1w_{p-s+1} & \sim v_sv_{s-1}xw_pw_{p-1}v_{s-2}\dots v_1w_{p-s+1} \sim \\
v_sv_{s-1}v_{s-2}\dots v_1 x w_pw_{p-1} \dots w_{p-s+1}. &
\end{align*}
\end{proof}

\begin{lemma} \label{lem: mv4}
When $v_s\neq x+1$, we have $b_qu_q \dots b_1u_1 v_s\dots v_1$ is equivalent to 
$b_q\dots b_1 v_s \dots v_1 u_q \dots u_1$.
\end{lemma}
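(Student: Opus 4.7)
The plan is to push each $u_i$ (for $i=1,2,\ldots,q$ in order) to the right of the entire $v$-block by a sequence of micro-moves from Definition~\ref{def: equiv}, always using the letter immediately preceding $u_i$ as the witness for a Knuth or weak Knuth triple.

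Two preliminary observations guarantee that the relevant triples fit the patterns of Definition~\ref{def: equiv}. First, by Remark~\ref{remark.fstar action}, $u_i \leqslant x-2$ for all $i$, and the hypothesis $v_s \neq x+1$ together with $v_s > x$ yields $v_s \geqslant x+2$; hence $v_j \geqslant x+2$ for every $1\leqslant j \leqslant s$, giving $v_j - u_i \geqslant 4$. Second, by Lemma~\ref{lem: far}(1), $b_{i-1} > u_i+1$, and since the $b_j$ strictly decrease, $b_j > u_i + 1$ for every $j \leqslant i-1$. Also $u_i \leqslant b_i$ from the pairing, so $u_i \leqslant b_1 < v_s$.

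I then argue by induction on $i$. Starting from $b_q u_q b_{q-1} u_{q-1} \cdots b_1 u_1 v_s \cdots v_1$, assume that after step $i-1$ the word has the form
\[
b_q u_q \cdots b_i u_i b_{i-1} \cdots b_1 v_s \cdots v_1 u_{i-1} \cdots u_1.
\]
At step $i$, I first swap $u_i$ past $b_{i-1}$ via the triple $b_i u_i b_{i-1}$: this is a Knuth move (I2) when $u_i < b_i$, or a weak Knuth move (II2) when $u_i = b_i$, the latter being valid because $b_{i-1} > u_i + 1$. I then swap $u_i$ past $b_{i-2},\ldots,b_1$ via (I2), each time with witness $b_\ell$ satisfying $u_i < b_\ell < b_{\ell-1}$. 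Next I transition into the $v$-block by swapping $u_i$ past $v_s$ via the triple $b_1 u_i v_s$: this is (I2) when $u_i < b_1$, or (II2) when $u_i = b_1$ (applicable since $v_s - u_i \geqslant 4$). Finally I swap $u_i$ past $v_{s-1},\ldots,v_1$ by repeated applications of (I2) with the preceding $v$-letter as witness, since $u_i < v_{j+1} < v_j$ for each pair. This lands $u_i$ immediately to the left of $u_{i-1}$; after step $q$ the word reads $b_q b_{q-1} \cdots b_1 v_s \cdots v_1 u_q u_{q-1} \cdots u_1$, as required.

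The only delicate points are the two potential equalities $u_i = b_i$ and $u_i = b_1$, which prevent a direct Knuth move; both are handled by the weak Knuth move (II2), whose gap condition is supplied precisely by Lemma~\ref{lem: far}(1) and by $v_s \geqslant x+2$. I expect no substantive obstacle beyond this case analysis; the argument is a straightforward bookkeeping exercise parallel in spirit to the proof of Lemma~\ref{lem: mv1}.
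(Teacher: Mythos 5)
Your proof is correct and follows essentially the same route as the paper's: both push each $u_i$ rightward past the remaining $b$'s and then the whole $v$-block, using Knuth moves in the generic case and weak Knuth moves for the two possible equalities $u_i=b_i$ (justified by Lemma~\ref{lem: far}(1)) and $u_i=b_1$ (justified by $v_s>x+1$). The paper merely packages these individual commutations in Remarks~\ref{remark.both}(4),(5),(8),(9) and~\ref{remark.simple}(2) rather than deriving them inline.
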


\begin{proof}
With the equivalence relations from Remark~\ref{remark.both} (4), (5), (8)-(9) and Remark~\ref{remark.simple} (2), we can make the following equivalence moves:
\begin{align*}
b_qu_q \dots b_1u_1 v_s\dots v_1 &\sim b_qu_q \dots b_1 v_s u_1 \dots v_1 \sim\\
 b_qu_q \dots b_1 v_s  \dots v_1 u_1 &\sim b_q\dots b_1 v_s \dots v_1 u_q \dots u_1.
\end{align*}
\end{proof}

\begin{lemma} \label{lem: mv6}
We have $b_qu_q \dots b_1u_1xv_{s-1}\dots v_1$ is equivalent to $b_q\dots b_1 x v_{s-1}\dots v_1u_q\dots u_1$.
\end{lemma}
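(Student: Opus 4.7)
The plan is to mirror the strategy of Lemma~\ref{lem: mv4}, moving each $u_i$ one at a time to the right using the micro-move relations collected in Remark~\ref{remark.both}, and to end with $u_q u_{q-1}\dots u_1$ lined up just after $v_1$. The only structural difference from Lemma~\ref{lem: mv4} is that here $x$ sits where $v_s$ did, so the relations involving the witness $v_s$ (specifically (9) and Remark~\ref{remark.simple}~(2)) must be replaced by the relations (3) and (10), which use $x$ as their witness.

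First I process $u_1$: apply (3) to transform $b_1 u_1 x \sim b_1 x u_1$, then (10) to transform $x u_1 v_{s-1} \sim x v_{s-1} u_1$, and then iterate (8) to push $u_1$ through $v_{s-2}, \dots, v_1$ one at a time (each swap uses the $v_{j+1}$ just moved past as its witness). This lands $u_1$ at the far right of the word.

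Next, for each $i$ from $2$ to $q$ in order, I repeat the analogous sequence on $u_i$. Relation (5) pushes $u_i$ past $b_{i-1}$; relation (4) (applied iteratively) pushes $u_i$ past $b_{i-2}, \dots, b_1$; and then the exact same cycle used on $u_1$---namely (3), then (10), then iterated (8)---carries $u_i$ past $x, v_{s-1}, \dots, v_1$ and deposits it immediately to the left of $u_{i-1}$. After all $q$ iterations the word reads $b_q \dots b_1 x v_{s-1} \dots v_1 u_q \dots u_1$, which is the target.

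There is no real obstacle: each move is an application of a relation that has already been verified in Remark~\ref{remark.both} (with hypotheses guaranteed by the fully-commutativity assumption and Lemma~\ref{lem: far}), so the main thing to track is bookkeeping---making sure that at every step the triple to which a relation is applied really is a consecutive subword of the current word, and that the index constraints on (4), (5), (8), (10) are met. The degenerate cases $s=1$ (where the $v_{s-1}\dots v_1$ block is empty and only the $x$-passing steps occur for each $u_i$) and $q=0$ (where the statement is trivial) fall out of the same procedure with no change.
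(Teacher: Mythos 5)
Your proof is correct and follows essentially the same route as the paper's: push each $u_i$ rightward past the intervening $b_j$'s, then past $x$ via relation (3), then past $v_{s-1},\dots,v_1$ via (10) and iterated (8)/(1) (the same swap written in the two directions). Your explicit inclusion of relation (4) for $q\geqslant 3$ and your treatment of the degenerate cases are fine; no gaps.
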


\begin{proof}
With the equivalence relations from Remark~\ref{remark.both} (1), (3), (5) and (10)
we have the following equivalence moves:
\begin{align*}
b_qu_q \dots b_1u_1xv_{s-1}\dots v_1 &\sim b_qu_q \dots b_1xu_1v_{s-1}\dots v_1 \sim \\
b_qu_q \dots b_1xv_{s-1}\dots v_1u_1 &\sim b_q \dots b_1xv_{s-1}\dots v_1u_q\dots u_1.
\end{align*}
\end{proof}

\begin{proposition} \label{prop: equiv}
Suppose $\mathbf{h}$ is a fully-commutative decreasing factorization such that $f_k^\star(\mathbf{h}) \neq 0$
(resp. $e_k^\star(\mathbf{h})\neq 0$). Then $f_k^\star(\mathbf{h})^{\mathsf{rev}} \sim \mathbf{h}^{\mathsf{rev}}$ 
(resp. $e_k^\star(\mathbf{h})^{\mathsf{rev}} \sim \mathbf{h}^{\mathsf{rev}}$) for the equivalence relation 
$\sim$ of Definition~\ref{def: equiv}.
\end{proposition}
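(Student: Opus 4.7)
The plan is to invoke the case analysis of Remark~\ref{remark.fstar action} and exhibit in each case an explicit chain of micro-moves assembled from Lemmas~\ref{lem: mv1}--\ref{lem: mv6}. Since by Definition~\ref{def: star} the operator $f_k^\star$ alters only the factors $h^k$ and $h^{k+1}$, the reversed words $\mathbf{h}^{\mathsf{rev}}$ and $f_k^\star(\mathbf{h})^{\mathsf{rev}}$ agree on every letter outside the subword
\[
\Omega \;=\; b_t\cdots b_1\,x\,v_s\cdots v_1\,u_q\cdots u_1\,w_p\cdots w_1
\]
coming from $(h^k)^{\mathsf{rev}}(h^{k+1})^{\mathsf{rev}}$ in the notation of Remark~\ref{remark.fstar action}. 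Since $\sim$ is generated by local three-letter replacements, it therefore suffices to transform $\Omega$ inside the ambient word into its $f_k^\star$-image read in reverse.

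In Case~(1) (where $v_s\neq x+1$), the target is $b_t\cdots b_1\,v_s\cdots v_1\,u_q\cdots u_1\,x\,w_p\cdots w_1$. First I apply Lemma~\ref{lem: mv1} to the subword $b_q\cdots b_1\,x\,v_s\cdots v_1\,u_q\cdots u_1$ inside $\Omega$, obtaining
\[
b_t\cdots b_{q+1}\,b_qu_q\cdots b_1u_1\,x v_s\cdots v_1\,w_p\cdots w_1.
\]
Next, combining Lemma~\ref{lem: mv2} to interleave $v_s\cdots v_1$ with $w_p\cdots w_{p-s+1}$ and then Lemma~\ref{lem: mv3} (whose hypothesis $v_s\neq x+1$ is exactly Case~(1)), I convert the middle portion $x v_s\cdots v_1\,w_p\cdots w_{p-s+1}$ into $v_s\cdots v_1\,x\,w_p\cdots w_{p-s+1}$, leaving $w_{p-s}\cdots w_1$ untouched. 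A final application of Lemma~\ref{lem: mv4} uninterleaves the $b$'s and $u$'s and delivers the target.

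Case~(2) (where $v_s=w_p=x+1$) follows the same skeleton. The first and last steps again use Lemma~\ref{lem: mv1} and then Lemma~\ref{lem: mv6} in place of Lemma~\ref{lem: mv4}. In the middle I apply Lemma~\ref{lem: mv2} to interleave, then use the Hecke move~(III) of Definition~\ref{def: equiv} in the form $x(x+1)(x+1)\sim xx(x+1)$ to replace $xv_sw_p$ by $xxw_p$ (this is the unique step that uses $v_s=w_p=x+1$), and finally apply Lemma~\ref{lem: mv5} to shuttle one of the two $x$'s past $v_{s-1}\cdots v_1$. The net middle effect is $xv_s\cdots v_1\,w_p\cdots w_{p-s+1}\sim x v_{s-1}\cdots v_1\,x\,w_p\cdots w_{p-s+1}$, which, after Lemma~\ref{lem: mv6} is applied on the outside, produces the target $b_t\cdots b_1\,x\,v_{s-1}\cdots v_1\,u_q\cdots u_1\,x\,w_p\cdots w_1$.

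The $e_k^\star$ statement is then immediate: since $e_k^\star$ and $f_k^\star$ are partial inverses, if $e_k^\star(\mathbf{h})\neq 0$ then $\mathbf{h}=f_k^\star(e_k^\star(\mathbf{h}))$, and applying the $f_k^\star$ result to $e_k^\star(\mathbf{h})$ gives $\mathbf{h}^{\mathsf{rev}}\sim e_k^\star(\mathbf{h})^{\mathsf{rev}}$. The genuine technical difficulty, namely proving the six non-trivial equivalences, has been absorbed into Lemmas~\ref{lem: mv1}--\ref{lem: mv6} (with the help of Lemma~\ref{lem: far} for the required gap conditions); the remaining task here is purely combinatorial bookkeeping, with the only delicate point being the selection of the correct middle-lemma trio according to whether $v_s=x+1$ or not.
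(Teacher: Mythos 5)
Your proposal is correct and follows essentially the same route as the paper's own proof: reduce to the reversed subword coming from $(h^k)^{\mathsf{rev}}(h^{k+1})^{\mathsf{rev}}$ in the notation of Remark~\ref{remark.fstar action}, apply Lemmas~\ref{lem: mv1} and~\ref{lem: mv2}, then finish with Lemmas~\ref{lem: mv3} and~\ref{lem: mv4} when $v_s\neq x+1$, or with the Hecke move (III) followed by Lemmas~\ref{lem: mv5} and~\ref{lem: mv6} when $v_s=w_p=x+1$, and deduce the $e_k^\star$ case from partial invertibility. The ordering of the lemma applications and the identification of the single step using $v_s=w_p=x+1$ match the paper exactly.
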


\begin{proof}
We prove the statement for $f^{\star}_k$. Since $e^{\star}_k$ is a partial inverse of $f_k^{\star}$, the result follows.

Let $\mathbf{h}=h^m\dots h^1\in \mathcal{H}^{m,\star}$ and define $\widetilde{\mathbf{h}} = f^{\star}_k(\mathbf{h})= 
h^m\dots \tilde{h}^{k+1}\tilde{h}^k h^{k-1}\dots h^1$. Specifically, $h^{k+1}=(w_1\dots w_p u_1\dots u_q)$ and 
$h^k = (v_1\dots v_sxb_1\dots b_t)$, where $x$ is the largest unpaired letter in $h^k$. Then by Lemmas~\ref{lem: mv1} 
and~\ref{lem: mv2}, we have the following sequence of equivalence moves:
\begin{align*}
(b_q\dots b_1 xv_s\dots v_1 u_q \dots u_1) w_p\dots w_{p-s+1} & \sim 
(b_qu_q\dots b_1u_1 xv_s\dots v_1)w_p\dots w_{p-s+1} \\
b_qu_q\dots b_1u_1 x (v_s\dots v_1w_p\dots w_{p-s+1}) &\sim b_qu_q\dots b_1u_1 x (v_sw_p\dots v_1w_{p-s+1}).
\end{align*}

\smallskip \noindent
\textbf{Case (1):}
When $v_s\neq x+1$, $\tilde{h}^{k+1}=(w_1\dots w_p x u_1 \dots u_q),\, \tilde{h}^k=(v_1\dots v_s b_1 \dots b_t)$. 
By Lemmas~\ref{lem: mv3} and~\ref{lem: mv4}, we have
\begin{align*}
	b_qu_q\dots b_1u_1 (x v_sw_p\dots v_1w_{p-s+1}) 
	& \sim b_qu_q\dots b_1u_1 (v_s\dots v_1 x w_p \dots w_{p-s+1}) \\
	(b_qu_q\dots b_1u_1 v_s\dots v_1) x w_p \dots w_{p-s+1} 
	&\sim (b_q\dots b_1 v_s\dots v_1 u_q \dots u_1) x w_p \dots w_{p-s+1}.
\end{align*}
Thus, we have that 
\[
	b_t\dots b_1xv_s\dots v_1 u_q\dots u_1 w_p\dots w_1 
	\sim b_t\dots b_1xv_s\dots v_1 u_q\dots u_1 xw_p\dots w_1.
\]

\smallskip \noindent
\textbf{Case (2):}
When $v_s=w_p=x+1$, $\tilde{h}^{k+1}=(w_1\dots w_{p} x u_1 \dots u_q),\, \tilde{h}^k=(v_1\dots v_{s-1} x b_1 \dots b_t)$. 
Then by Lemmas~\ref{lem: mv5} and~\ref{lem: mv6}, we have
\begin{align*}
	b_qu_q\dots b_1u_1 (x v_sw_p) v_{s-1}w_{p-1} \dots v_1w_{p-s+1} 
	&\sim b_qu_q\dots b_1u_1 (x x w_p) v_{s-1}w_{p-1} \dots v_1w_{p-s+1} \\
	b_qu_q\dots b_1u_1 x (x w_p v_{s-1}w_{p-1} \dots v_1w_{p-s+1}) 
	&\sim b_qu_q\dots b_1u_1 x (v_{s-1}\dots v_1 x w_p\dots w_{p-s+1}) \\
	(b_qu_q\dots b_1u_1 x v_{s-1}\dots v_1) x w_p\dots w_{p-s+1} 
	&\sim  (b_q\dots b_1 x v_{s-1}\dots v_1 u_q\dots u_1) x w_p\dots w_{p-s+1}. 
\end{align*}
Thus, we have that 
\[
	b_t\dots b_1xv_s\dots v_1 u_q\dots u_1 w_p\dots w_1 
	\sim b_t\dots b_1 x v_{s-1}\dots v_1 u_q \dots u_1 x w_p\dots w_1.
\]
Therefore, we have shown that in both cases, $f^{\star}_k(\mathbf{h})^{\mathsf{rev}}\sim \mathbf{h}^{\mathsf{rev}}$.
\end{proof}

\begin{proposition}
\label{proposition.P tableaux}
For $\mathbf{h}\in \mathcal{H}^{m,\star}$ such that $f^{\star}_k(\mathbf{h})\neq 0$ for some $1\leqslant k<m$, the 
$\star$-insertion tableau for $\mathbf{h}$ equals the $\star$-insertion tableau for $f^{\star}_k(\mathbf{h})$.
\end{proposition}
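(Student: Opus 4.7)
The plan is to deduce this proposition as an immediate consequence of the two preceding results, namely Proposition~\ref{prop: equiv} and Proposition~\ref{prop: insertion}. Recall that the $\star$-insertion is defined by reading the columns of the associated Hecke biword from right to left, so the insertion tableau of $\mathbf{h}$ is determined by the word $\mathbf{h}^{\mathsf{rev}}$.

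First, since $f^{\star}_k(\mathbf{h}) \neq 0$, Proposition~\ref{prop: equiv} applies and gives the equivalence
\[
	f^{\star}_k(\mathbf{h})^{\mathsf{rev}} \sim \mathbf{h}^{\mathsf{rev}}
\]
under the micro-move equivalence relation of Definition~\ref{def: equiv}. Both words are fully-commutative, since $\mathbf{h}\in \mathcal{H}^{m,\star}$ and $f^{\star}_k$ preserves fully-commutativity (as established earlier when $\mathcal{H}^{m,\star}$ was shown to be closed under the crystal operators).

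Next, I apply Proposition~\ref{prop: insertion}, which says that two fully-commutative words in $\mathcal{H}_0(n)$ whose reverse words are related by micro-moves have the same $\star$-insertion tableau. Applying this to the pair $\mathbf{h}$ and $f^{\star}_k(\mathbf{h})$ yields the desired equality of $\star$-insertion tableaux.

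There is essentially no obstacle, as all the work has been done in the technical Lemmas~\ref{lem: far}--\ref{lem: mv6} leading up to Proposition~\ref{prop: equiv} and in Lemma~\ref{lema: single-row} underlying Proposition~\ref{prop: insertion}. The only minor point to observe is that the crystal operator $f^{\star}_k$ only modifies the $k$-th and $(k+1)$-st factors, so the equivalence $f^{\star}_k(\mathbf{h})^{\mathsf{rev}} \sim \mathbf{h}^{\mathsf{rev}}$ indeed makes sense as an equivalence of whole words (the other factors appearing as a common prefix and suffix under reversal), which is exactly the setting in which Proposition~\ref{prop: insertion} was stated.
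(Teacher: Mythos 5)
Your proof is correct and is essentially identical to the paper's own argument: the paper likewise deduces the proposition by combining Proposition~\ref{prop: equiv} (the reverse words are $\sim$-equivalent) with Proposition~\ref{prop: insertion} (micro-move-equivalent reverse words yield the same $\star$-insertion tableau). The extra remarks you add about fully-commutativity being preserved and $f^{\star}_k$ acting only on two factors are accurate but not needed beyond what those two propositions already supply.
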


\begin{proof}
By Proposition~\ref{prop: equiv}, the reverse words for $\mathbf{h}$ and $f^{\star}_k(\mathbf{h})$ are 
$\sim$-equivalent. By Proposition~\ref{prop: insertion}, the corresponding insertion tableaux are equal.
\end{proof}

\begin{proposition} \label{prop: lowest-weight.star-insertion}
Let $\mathbf{h}\in \mathcal{H}^{m,\star}$ be a lowest weight element under Definition~\ref{def: star} of weight $\lambda$. 
Then there exists $r\geqslant 1$ where $\lambda_i=0$ for $i<r$ and $\lambda_{i+1}\geqslant \lambda_i$ for 
$1\leqslant i\leqslant m$. Suppose $\mathbf{h} = h^{m}\cdots h^r= (h^m_{\lambda_m}\dots h^m_1)
(h^{m-1}_{\lambda_{m-1}}\cdots h^{m-1}_1)\dots (h^r_{\lambda_r}\dots h^r_1)$, then the $i$-th row of the $\star$-insertion 
tableau equals $h^{m+1-i}_1, h^{m+1-i}_2, \dots, h^{m+1-i}_{\lambda_{m+1-i}}$, that is,
\begin{equation}
\label{equation.Pstar}
\ytableausetup{mathmode,boxsize=2.5em}
P^\star(\mathbf{h}) = 
\raisebox{2cm}{
	\begin{ytableau}
	h^r_1 & \dots & h^r_{\lambda_r}\\
	\dots & \dots & \dots & \dots \\
	h^{m-1}_1 & h^{m-1}_2 & \dots & \dots & h^{m-1}_{\lambda_{m-1}}\\
	h^m_1 & h^m_2 & \dots & \dots & \dots  & h^m_{\lambda_m}  
	\end{ytableau}}\;.
\end{equation}
\end{proposition}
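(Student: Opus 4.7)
The plan is to proceed by induction on the number of non-empty factors processed, inserting the columns of the decreasing Hecke biword from right to left.

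First, the structural claim about $\lambda$ follows directly from the lowest-weight hypothesis: since $f^\star_k(\mathbf{h}) = 0$ for every $1 \leqslant k < m$, Definition~\ref{def: star} forces every letter of $h^k$ to be paired with a letter of $h^{k+1}$, giving $\lambda_k \leqslant \lambda_{k+1}$. Taking $r = \min\{k : \lambda_k > 0\}$ yields $\mathbf{h} = h^m \cdots h^r$ with $h^{r-1} = \cdots = h^1 = \emptyset$.

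The core of the argument is the following strengthened inductive claim: after inserting $h^r, h^{r+1}, \dots, h^i$ via the $\star$-insertion, the partial insertion tableau $P^{(i)}$ has its $j$-th row from the bottom (French notation) equal to the factor $h^{i-j+2}$ listed in increasing order, for $1 \leqslant j \leqslant i - r + 1$. The base case $i = r$ is immediate: reading the strictly decreasing $h^r$ from right to left gives a strictly increasing sequence, so each letter exceeds the current row maximum, and Case 1 of Definition~\ref{def: new_insertion} appends it to row 1. The inductive step reduces to a \textbf{key sub-lemma}: when one inserts the letters of $h^{i+1}$ in increasing order into a single row equal to $h^i$ in increasing order, the final row equals $h^{i+1}$ in increasing order and the sequence of letters propagated to the next row (via Case 2 or Case 3) is exactly $h^i$ in increasing order. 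Iterating this sub-lemma vertically up the rows of $P^{(i)}$---using the inductive structure that row $j$ held $h^{i-j+2}$ in increasing order and that lowest weight gives the pairing between $h^{i-j+1}$ and $h^{i-j+2}$---yields the required form of $P^{(i+1)}$.

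The main obstacle is proving the sub-lemma. The principal ingredients are: Lemma~\ref{lem: row.bumping}(1), applied inductively to the strictly increasing insertion sequence $b_{\lambda_{i+1}} < b_{\lambda_{i+1}-1} < \cdots < b_1$, which ensures monotonicity of successive insertion paths and hence that the propagated letters form an increasing sequence; the lowest-weight condition, which by Definition~\ref{def: star} guarantees that every letter of $h^i$ has a distinct pairing partner in $h^{i+1}$; and a bookkeeping argument showing that after inserting $b_{\lambda_{i+1}}, \dots, b_{k+1}$, row 1 consists precisely of $\{b_{\lambda_{i+1}}, \dots, b_{k+1}\}$ together with the yet-unpropagated letters of $h^i$, arranged in increasing order. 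The case distinction on whether $b_k \notin h^i$ (triggering Case 2) or $b_k \in h^i$ (triggering Case 3) is controlled by Remark~\ref{rm: star} and the fully-commutative hypothesis, and in either case exactly one letter of $h^i$ is propagated per insertion. A counting argument using the pairing then shows that after all $\lambda_{i+1}$ insertions, every letter of $h^i$ has been propagated, so row 1 consists precisely of $h^{i+1}$ in increasing order, completing the induction.
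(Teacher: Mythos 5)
Your proposal is correct and takes essentially the same route as the paper: induct on the number of non-empty factors, use the lowest-weight pairing to get the entrywise domination $h^{i+1}_j\leqslant h^i_j$, and use Lemma~\ref{lem: row.bumping}(1) to conclude that each letter of the newly inserted factor travels straight up its own column, pushing each previously inserted factor up by one row (your row-by-row sub-lemma is just the transposed description of the paper's ``vertical insertion paths''). Two cosmetic points: the $j$-th row from the bottom of $P^{(i)}$ should be $h^{i-j+1}$, not $h^{i-j+2}$, and the tool that rules out the bad subcase of Case 3 (a nontrivial interval $[y,x]$ in the row) is the fully-commutative hypothesis rather than Remark~\ref{rm: star}, which concerns unpaired letters and is vacuous at lowest weight.
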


\begin{proof}
Without loss of generality, we may assume that $r=1$. We prove the statement by induction on $m$.
The case $m=1$ is trivial.

Let $m\geqslant 1$ be arbitrary and suppose that the statement holds for this $m$. We prove the statement for $m+1$. 
We need to insert $P^\star(\mathbf{h}) \leftarrow h^{m+1}_1\leftarrow h^{m+1}_2\leftarrow \cdots \leftarrow 
h^{m+1}_{\lambda_{m+1}}$, where $P^\star(\mathbf{h})$ is as in~\eqref{equation.Pstar} with $r=1$. Note that 
$h^{m+1}_i\leqslant h^m_i$ for $1\leqslant i \leqslant \lambda_m$. Specifically, $h^{m+1}_1\leqslant h^{m}_1$, so its 
insertion path is vertical along the first column and we obtain
\[
\ytableausetup{mathmode,boxsize=2.5em}
P^\star(\mathbf{h}) \leftarrow h^{m+1}_1= 
\raisebox{2.7cm}{
	\begin{ytableau}
	h^1_1\\
	h^2_1 & \dots & h^r_{\lambda_r}\\
	\dots & \dots & \dots & \dots \\
	h^{m}_1 & h^{m-1}_2 & \dots & \dots & h^{m-1}_{\lambda_{m-1}}\\
	h^{m+1}_1 & h^m_2 & \dots & \dots & \dots  & h^m_{\lambda_m}  
	\end{ytableau}}\;.
\]
Since $h^{m+1}_1<h^{m+1}_2\leqslant h^m_2$, the insertion path of $h^{m+1}_2$ is strictly to the right of the insertion
path of $h^{m+1}_1$ and weakly left of the second column by Lemma \ref{lem: row.bumping}, so it is vertical along the 
second column. Similar arguments show that the insertion path for $h^{m+1}_i$ is just vertical along the $i$-th column. 
Thus, the result holds for $m+1$.
\end{proof}

\begin{remark}\label{remark.shape}
For a lowest weight element $\mathbf{h}\in \mathcal{H}^{m,\star}$ of weight $\mathbf{a}$, the corresponding insertion 
tableau must have shape $\mu=\sort(\mathbf{a})$, which is the partition obtained by reordering $\mathbf{a}$.
\end{remark}

\begin{proposition}\label{prop: res.star-insertion}
Let $T\in \SSYT(\lambda)$ and $(P,Q) = \star \circ \res(T)$. Then $Q = T$.
\end{proposition}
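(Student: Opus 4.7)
The plan is to prove $Q = T$ by induction on the number of cells processed, building $T$ cell by cell in the order that matches the right-to-left reading of $\res(T)$. I will strengthen the induction to include an explicit formula for the insertion tableau, parallel to the case analysis used in the proof of Theorem~\ref{thm: res}.

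Order the cells of $T$ by primary key the entry (ascending) and secondary key the content $\ell + j - i$ (ascending), where $\ell = \ell(\lambda)$. Because cells with a common entry $k$ form a horizontal strip in a semistandard tableau, a short check shows their contents are pairwise distinct, so this is a total order; moreover it agrees exactly with the right-to-left scan of the $k$-th factor $h^k$ of $\res(T) = h^m \cdots h^1$. Let $T^{(n)}$ denote the subtableau of $T$ on the first $n$ cells in this order; using the horizontal strip structure one verifies that $T^{(n)}$ always has partition shape and that the $(n{+}1)$-st cell $(i, j)$ is an outer corner of this shape. Writing $\mu_b^{(n)}$ for the length of the $b$-th column of the shape of $T^{(n)}$, this gives $\mu_j^{(n)} = i - 1$ and (when $j > 1$) $\mu_{j-1}^{(n)} \geq i$.

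I strengthen the induction hypothesis to
\[
Q^{(n)} = T^{(n)}, \qquad P^{(n)}(a, b) = \ell + b - \mu_b^{(n)} + a - 1,
\]
where $(P^{(n)}, Q^{(n)}) = \star\bigl(\res(T^{(n)})\bigr)$. The base case $n = 0$ is trivial. For the inductive step, let $x = \ell + j - i$ be the content of the newly added cell $(i, j)$ with entry $k$, which is the next letter inserted into $P^{(n)}$. The explicit formula gives $P^{(n)}(1, j) = \ell + j - i + 1 > x$, while for $b < j$ the bound $\mu_b^{(n)} \geq \mu_{j-1}^{(n)} \geq i$ (using that column lengths are weakly decreasing) yields $P^{(n)}(1, b) \leq \ell + j - 1 - i < x$; since row $1$ is strictly increasing, $x$ is not in row $1$. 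Case~$2$ of the $\star$-insertion therefore bumps $P^{(n)}(1, j)$ with $x$. Iterating, at each row $r = 1, \dots, i - 1$ the incoming value $\ell + j - i + r - 1$ bumps $P^{(n)}(r, j) = \ell + j - i + r$, which is then carried into row $r+1$. At row $i$, column $j$ is empty (since $\mu_j^{(n)} = i-1 < i$), row $i$ contains cells at columns $1, \ldots, j - 1$ with entries bounded above by $\ell + j - 2$, and the incoming value $\ell + j - 1$ is appended as a new box at position $(i, j)$ by Case~$1$. The recording tableau thus gains a box at $(i, j)$ with label $k$, giving $Q^{(n+1)} = T^{(n+1)}$; direct substitution verifies the formula for $P^{(n+1)}$ with $\mu_j^{(n+1)} = i$, closing the induction.

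The main technical content is the vertical bumping analysis in the inductive step, which mirrors the argument for Hecke insertion in Theorem~\ref{thm: res}. It is slightly simpler here because $T \in \SSYT(\lambda)$ ensures each cell is inserted exactly once, so the Hecke-insertion case of a cell already present in the tableau does not arise. The edge cases $j = 1$ (no column $j-1$ to control) and $i = 1$ (Case~$1$ applies immediately in row $1$) follow the same pattern with only trivial adjustments.
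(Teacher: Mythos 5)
Your proposal is correct and follows essentially the same route as the paper: induction on subtableaux ordered by entry then content, with the strengthened hypothesis giving the explicit entries $\ell+b-\mu_b^{(n)}+a-1$ of the insertion tableau, and the observation that the vertical bumping path up column $j$ terminates by appending at the outer corner $(i,j)$. The paper simply cites the analogous case analysis from Theorem~\ref{thm: res} (noting that the set-valued Case (1) never arises since each cell of a semistandard tableau is inserted exactly once), whereas you have written out the bumping analysis in full; the content is the same.
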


\begin{proof}
The proof is done by induction on subtableaux of $T$ similarly to the proof of Theorem~\ref{thm: res}.

For a given step in the insertion process, suppose that the entries of $T$ that are involved so far form a nonempty 
subtableau $T'$ of $T$ with shape $\mu$ containing cell $(1,1)$. Furthermore, assume that the insertion and 
recording tableau at the corresponding step are $P(T')$ and $Q(T')$. Then they both have shape $\mu$, and the entry 
of cell $(i,j)$ of $P(T')$ is $\ell+j-\mu'_j+i-1$. In addition, $Q(T') = T'$, where $\mu'$ is the conjugate of the partition 
$\mu$ and $\ell:= \lambda'_1=\ell(\lambda)$.

Note that we do not encounter Case (1) in the proof of Theorem~\ref{thm: res}. All other arguments still hold since for 
every insertion the letter is not contained in the row it is inserted into, that is, the insertion always bumps the smallest letter 
that is greater than itself. Thus, we omit the detail of the proof.
\end{proof}

\subsection{The $\star$-insertion and crystal operators}
\label{section.star insertion and crystal}

In this section, we prove that the $\star$-insertion and the crystal operators
on fully-commutative decreasing factorizations and semistandard Young tableaux intertwine.

\begin{theorem}\label{thm: star-crystal.star-insertion}
Let $\mathbf{h}\in \mathcal{H}^{m,\star}$.
Let $(P^\star(\mathbf{h}),Q^\star(\mathbf{h})) = \star(\mathbf{h})$ be the insertion and recording tableaux under the
$\star$-insertion of Definition~\ref{def: new_insertion}. Then 
\begin{enumerate}
\item 
$f_i^\star(\mathbf{h})$ is defined if and only if $f_i(Q^\star(\mathbf{h}))$ is defined. 
\item
If $f_i^\star(\mathbf{h})$ is defined, then $Q^\star(f_i^\star(\mathbf{h})) = f_i Q^\star(\mathbf{h})$.
\end{enumerate}
In other words, the following diagram commutes:
\[
	\begin{tikzcd}
	\mathcal{H}^{m,\star} \arrow[r,"Q^\star"] \arrow[d,"f^{\star}_i"] & \SSYT^m \arrow[d,"f_i"] \\
	\mathcal{H}^{m,\star}\arrow[r,"Q^\star"] & \SSYT^m.
	\end{tikzcd}
\]
\end{theorem}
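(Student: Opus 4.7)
The plan is to combine the invariance of the insertion tableau $P^\star$ on connected components (Proposition~\ref{proposition.P tableaux}) with the bijectivity of $\star$-insertion (Theorem~\ref{theorem.star insertion bijection}) to reduce the claim to a commutativity check, using the lowest weight element of Proposition~\ref{prop: lowest-weight.star-insertion} as an anchor.

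First, I fix a connected component $C$ of the $\star$-crystal. By Proposition~\ref{proposition.P tableaux}, $P^\star$ is constant on $C$ with common value $P_C$ of some shape $\mu$. By Theorem~\ref{theorem.star insertion bijection}, $\star$-insertion is a bijection onto the set of pairs $(P,Q)$ of matching shape with $P^t$ semistandard, $Q$ semistandard, and $\row(P)$ fully commutative; since $P_C$ already satisfies the constraints on $P$, the map $Q^\star$ restricts to a bijection $C \to \SSYT^m(\mu)$. This bijection preserves weight, because $\len(h^i) = \wt(\mathbf{h})_i$ equals the number of $i$'s in $Q^\star(\mathbf{h})$. Thus, both $C$ (by Corollary~\ref{corollary.crystal}) and $\SSYT^m(\mu)$ are connected Stembridge crystals of type $A_{m-1}$ sharing character $s_\mu$, each irreducible of highest weight $\mu$.

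To show $Q^\star$ intertwines the crystal operators, I would trace the effect on the recording tableau as one passes from $\mathbf{h}^{\mathsf{rev}}$ to $f_i^\star(\mathbf{h})^{\mathsf{rev}}$ through the micro-moves of Definition~\ref{def: equiv}. Proposition~\ref{prop: equiv} gives an explicit such chain of moves, and Proposition~\ref{prop: insertion} ensures each move fixes the insertion tableau, so $Q^\star$ can change only in a controlled local way determined by the sequence of moves. A natural sanity check is the lowest weight vertex: by Proposition~\ref{prop: lowest-weight.star-insertion}, the lowest weight element $\mathbf{h}_0 \in C$ has $\wt(\mathbf{h}_0) = (0, \ldots, 0, \lambda_r, \ldots, \lambda_m) = w_0 \mu$, so $Q^\star(\mathbf{h}_0)$ is an SSYT of shape $\mu$ and weight $w_0 \mu$; since $K_{\mu, w_0 \mu} = 1$, it is forced to be the lowest weight element of $\SSYT^m(\mu)$, confirming that $Q^\star$ agrees with the unique abstract crystal isomorphism at this vertex.

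The main obstacle lies in the case analysis for the micro-moves: one must verify that the cumulative change in $Q^\star$ under the chain in Proposition~\ref{prop: equiv} corresponds exactly to the $f_i$-action on $\SSYT^m(\mu)$, namely changing the rightmost unpaired $i$ (or the relevant $i+1$ in the edge case when an adjacent cell contains both $i$ and $i+1$) appropriately. Once this local match is established, the partial inverse relation between $e_i^\star$ and $f_i^\star$ noted in Definition~\ref{def: star} converts the commutativity for $f_i^\star$ into the first part of the theorem, that $f_i^\star(\mathbf{h})$ is defined if and only if $f_i(Q^\star(\mathbf{h}))$ is, completing the proof.
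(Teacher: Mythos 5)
Your setup is sound as far as it goes: Proposition~\ref{proposition.P tableaux} plus the bijectivity of $\star$-insertion does show that $Q^\star(\mathbf{h})$ and $Q^\star(f_i^\star(\mathbf{h}))$ are semistandard tableaux of the same shape, that $Q^\star$ is weight-preserving, and the lowest-weight anchor is a correct consistency check. But the argument has a genuine gap at exactly the point you flag as ``the main obstacle'': a weight-preserving bijection between two abstractly isomorphic connected Stembridge crystals need not commute with the crystal operators, so nothing in your abstract framework forces $Q^\star(f_i^\star(\mathbf{h})) = f_i(Q^\star(\mathbf{h}))$. The entire content of the theorem is the local verification you defer, and the route you sketch for it --- tracking the recording tableau through the chain of micro-moves of Proposition~\ref{prop: equiv} --- is not how the paper proceeds and is problematic on its own terms: the micro-moves reorder the letters being inserted, and while Proposition~\ref{prop: insertion} guarantees the insertion tableau is unchanged, the recording data changes in an uncontrolled way under each individual move (the intermediate words in the chain are not even decreasing factorizations, so they carry no recording tableau in the sense of Definition~\ref{def: new_insertion}). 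The paper instead argues directly with insertion paths: $f_i^\star$ deletes one letter $h^i_k$ from the $i$-th factor, and Lemma~\ref{lem: row.bumping} is used to compare, bump by bump, the insertion paths of $h^i_1,\dots,h^i_\ell$ with those of $h^i_1,\dots,\widehat{h^i_k},\dots,h^i_\ell$, concluding that exactly one cell labelled $i$ in $Q^\star(\mathbf{h})$ becomes an $i+1$; a second insertion-path argument then identifies that cell as the rightmost unbracketed $i$. None of this is present in your proposal.

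There is also a smaller gap in your closing step. Knowing that $e_i^\star$ is a partial inverse of $f_i^\star$, together with the intertwining statement for $f_i^\star$, only yields the implication that $f_i^\star(\mathbf{h})$ defined implies $f_i(Q^\star(\mathbf{h}))$ defined; the converse direction of part (1) does not follow from this. It can be recovered either by the paper's direct bracketing argument (if every letter of $h^i$ is paired with a letter of $h^{i+1}$, the corresponding insertion paths show every $i$ in $Q^\star(\mathbf{h})$ is paired with an $i+1$), or by a counting argument comparing $|\{\mathbf{h}\in C: f_i^\star(\mathbf{h})\neq 0\}|$ with $|\{T\in\SSYT^m(\mu): f_i(T)\neq 0\}|$ via the abstract isomorphism $C\cong B(\mu)$ --- but you would need to carry out one of these explicitly.
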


\begin{proof}
The crystal operator $f_i^\star$ acts only on factors $h^{i+1}$ and $h^i$. Hence it suffices to prove the statement
for $\mathbf{h}=h^{i+1} h^i \dots h^1$ with $i+1$ factors.

Suppose $f_i^\star(\mathbf{h})\neq 0$. By Proposition~\ref{proposition.P tableaux}, $P^\star(\mathbf{h})=
P^\star(f^\star_i(\mathbf{h}))$. Furthermore, by Lemma~\ref{lem: star.insertion} $P^\star(\mathbf{h})$ and 
$Q^\star(\mathbf{h})$ have the same shape. Hence in particular, $Q^\star(\mathbf{h})$ and 
$Q^\star(f_i^\star(\mathbf{h}))$ have the same shape and therefore the letters $i$ and $i+1$ in $Q^\star(\mathbf{h})$ 
and $Q^\star(f_i^\star(\mathbf{h}))$ occupy the same skew shape.

Recall from Definition~\ref{def: star} that $f_i^\star$ removes precisely one letter from factor 
$h^i = (h^i_\ell h^i_{\ell-1} \dots h^i_1)$, say $h^i_k$. By Lemma~\ref{lem: row.bumping},
the insertion paths of $h_1^i,\dots,h_\ell^i$ into $P^\star(h^{i-1}\cdots h^1)$ move strictly to the right and the newly
added cells form a horizontal strip. In addition, the letters $h_1^i,\dots,h_\ell^i$ appear in the first row of
$P^\star(h^i \cdots h^1)$. Now compare this to the insertion paths for $h^i_1,\dots,\widehat{h^i_k},\dots,h^i_\ell$
into $P^\star(h^{i-1}\dots h^1)$, where $h^i_k$ is missing. Up to the insertion of $h_{k-1}^i$, everything agrees.
Suppose that $h^i_k$ bumps the letter $x$ in the first row and $h^i_{k+1}$ bumps the letter $y>x$ in the first row
by Lemma~\ref{lem: row.bumping}.
Then when $h^i_{k+1}$ gets inserted without prior insertion of $h^i_k$, the letter $h_{k+1}^i$ either still bumps $y$ 
or $h_{k+1}^i$ bumps $x$ (in which case $x$ and $y$ are adjacent in the first row in $P^\star(h^{i-1} \cdots h^1)$).
There are no other choices, since if there are letters between $x$ and $y$ in the first row and $h^i_{k+1}$ bumps
one of these, it would have already bumped a letter to the left of $y$ in $P^\star(h^i\cdots h^1)$. If $h^i_{k+1}$ bumps
$x$ without prior insertion of $h^i_k$, then its insertion path is the same as the insertion path of $h^i_k$ previously. If 
$h^i_{k+1}$ bumps $y$, then the letter inserted into the second row by similar arguments either bumps the same letter 
as in the previous insertion path of $h^i_{k+1}$ or $h^i_k$ and so on. The last cell added is hence the same cell
added in the previous insertion path of either $h^i_k$ or $h^i_{k+1}$. Repeating these arguments, exactly one
cell containing $i$ in $Q^\star(h^i\cdots h^1)$ is missing in $Q^\star((h^i_\ell \dots\widehat{h^i_k}\dots h^i_1)
h^{i-1}\cdots h^1)$ and all other cells containing $i$ are the same. Hence, $Q^\star(f_i^\star(\mathbf{h}))$ is obtained
from $Q^\star(\mathbf{h})$ by changing exactly one letter $i$ to $i+1$.

It remains to prove that $f_i^\star(\mathbf{h})\neq 0$ if and only if $f_i(Q^\star(\mathbf{h}))\neq 0$ and, if 
$f_i^\star(\mathbf{h})\neq 0$, then the letter $i$ that is changed to $i+1$ from $Q^\star(\mathbf{h})$ to 
$Q^\star(f_i^\star(\mathbf{h}))$ is the rightmost unbracketed $i$ in $Q^\star(\mathbf{h})$. First assume that under the 
bracketing rule for $f_i^\star$, all letters in the factor $h^i$ are bracketed, so that $f_i^\star(\mathbf{h})=0$. This means 
that each letter in $h^i$ is paired with a weakly smaller letter in $h^{i+1}$.
Then by similar arguments as in Lemma~\ref{lem: row.bumping} (2), for each insertion path for the letters in $h^i$,
there is an insertion path for the letters in $h^{i+1}$ that is weakly to the left and the resulting new cell is weakly to
the left and strictly above of the corresponding new cell for the letter in $h^i$. This means that each $i$
in $Q^\star(\mathbf{h})$ is paired with an $i+1$ and hence $f_i(Q^\star(\mathbf{h}))=0$.

Now assume that $f_i^\star(\mathbf{h})\neq 0$. Let us use the same notation as in Remark~\ref{remark.fstar action}
(with $k$ replaced by $i$). Since all letters $u_q,\dots,u_1<x$ are paired with some letters $b_j<x$, their
insertion paths (again by similar arguments as in Lemma~\ref{lem: row.bumping}) lie strictly to the left of the 
insertion path for $x$. First assume that $v_s \neq x+1$. Recall that by Proposition~\ref{proposition.P tableaux}, 
$P^\star(\mathbf{h})= P^\star(f^\star_i(\mathbf{h}))$. Also, by the above arguments, moving letter $x$ to factor 
$h^{i+1}$ under $f_i^\star$, changes one $i$ to $i+1$ (precisely the $i$ that is missing when removing $x$ from $h^i$).
Now the letters $w_p,\dots,w_1>x$ are inserted after the letter $x$ in the $(i+1)$-th factor in $f_i^\star(\mathbf{h})$ and by
Lemma~\ref{lem: row.bumping} their insertion paths are strictly to the right of the insertion path of $x$ in
$f_i^\star(\mathbf{h})$. But this means that the corresponding $i+1$ in $Q^\star(\mathbf{h})$ cannot bracket
with the $i$ that changes to $i+1$ under $f_i^\star$. This proves that $f_i(Q^\star(\mathbf{h}))\neq 0$.
Furthermore, each $v_s,\dots,v_1$ is paired with some $w_j$ and hence the insertion path of this $w_j$ is weakly to the
left of the insertion path of the corresponding $v_h$. Hence all $i$ to the right of the $i$ that changes to an $i+1$
under $f_i^\star$ are bracketed. This proves that this $i$ is the rightmost unbracketed $i$, proving the claim. 
The case $v_s=x+1$ is similar.
\end{proof}

\begin{remark}
\label{remark.schur expansion}
Proposition~\ref{prop: lowest-weight.star-insertion} and Theorem \ref{thm: star-crystal.star-insertion} provide another 
proof via $\star$-insertion, in the case where $w$ is fully-commutative, of the Schur positivity of $\mathfrak{G}_w$ 
of Fomin and Greene~\cite{FominGreene.1998}
\[
 	\mathfrak{G}_w = \sum_\mu \beta^{|\mu|-\ell(w)} g_w^{\mu} s_\mu,
\]
where $g_w^{\mu} = |\{ T \in \mathsf{SSYT}^n(\mu') \mid w_C(T) \equiv w\} |$.
\end{remark}

\subsection{Uncrowding set-valued skew tableaux}
\label{section.uncrowding}

Buch~\cite{Buch.2002} introduced a bijection from a set-valued tableau of straight shape to a pair 
$(P,Q)$, where $P$ is a semistandard tableau and $Q$ is a flagged increasing tableau.
The map involves the use of a dilation operation~\cite{BandlowMorse.2012,RTY.2018} which can be defined equally to 
act on set-valued skew tableaux. Chan and Pflueger~\cite{ChanPflueger.2019} recently studied the operation in this 
more general context. We review here the results needed for our purposes.

Let $\lambda$, $\mu$ be partitions such that $\lambda\subseteq\mu$ and $\lambda_1=\mu_1$. 
A \defn{flagged increasing tableau} (introduced in~\cite{Lenart.2000} and called \defn{elegant fillings} by various
authors~\cite{Lenart.2000,LamPylyavskyy.2007,BandlowMorse.2012,Patrias.2016})
is a row and column strict filling of the skew shape $\mu/\lambda$ such that 
the positive integers entries in the $i$-th row of the tableau are at most $i-1$ for all $1\leqslant i\leqslant \ell(\mu)$.
In particular, the bottom row is empty.
Denote the set of all flagged increasing tableaux of shape $\mu/\lambda$ by $\mathcal{F}_{\mu/\lambda}$.

We use \defn{multicell} to refer to a cell in a set-valued tableau with more than one letter.

\begin{definition} \label{def: uncrowd}
For a skew shape $\lambda/\mu$, the \defn{uncrowding operation} is defined on  $T\in\SVT(\lambda/\mu)$
as follows: identify the topmost row $r$ in $T$ containing a multicell. Let $x$ be the largest letter
in row $r$ which lies in a multicell; delete this $x$ and perform RSK row bumping with $x$ 
into the rows above. The resulting tableau is the output of this operation. Note that its
shape differs from $\lambda/\mu$ by the addition of one cell.

The \defn{uncrowding map}, denoted $\uncrowd$, is defined as follows. Let $T\in\SVT(\lambda/\mu)$ with 
$\ext(T)=\ell$. 
\begin{itemize}
\item Start with $\widetilde{P}_0=T$ and $\widetilde{Q}_0=F$, where $F$ is the unique flagged increasing tableau of 
shape $\lambda/\lambda$. 
\item For each $1\leqslant i\leqslant \ell$, $\widetilde{P}_i$ is obtained 
from $\widetilde{P}_{i-1}$ by successively applying the uncrowding operation until no multicells remain.
Each operation involves the addition of cell $C$ to form $\widetilde{P}_i$ by first deleting an entry in cell 
$B$ of $\widetilde{P}_{i-1}$; this is recorded by adding a cell with entry $k$ to $\widetilde{Q}_{i-1}$ 
at the same position as $C$, where $k$ is the difference in the row indices of cells $B$ and $C$.
\item Terminate and return $(\widetilde{P},\widetilde{Q})=(\widetilde{P}_\ell,\widetilde{Q}_\ell)$.
\end{itemize}
\end{definition}

\begin{example}
Let $T$ be the semistandard set-valued tableau
\[
\ytableausetup{mathmode,boxsize=1.8em}
T = 
\raisebox{1.6cm}{
\begin{ytableau}
5\\
4 & 4 & 5\\
2 & 23 & 3\\
1 & 1 & 1 & 12 & 234 & 5 
\end{ytableau}}\;.
\]	
Perform an uncrowding operation to obtain
\[
T' = 
\raisebox{2.2cm}{
\begin{ytableau}
5\\
4\\
3 & 4 & 5\\
2 & 2 & 3\\
1 & 1 & 1 & 12 & 234 & 5 
\end{ytableau}}\;.
\]
Proceeding with uncrowding the remaining multicells and recording the changes, we have
$\uncrowd(T)=(\widetilde{P},\widetilde{Q})$, where
\[
\widetilde{P} = 
\raisebox{2.2cm}{
\begin{ytableau}
5 & 5\\
4 & 4\\
3 & 3 & 4\\
2 & 2 & 2 & 3\\
1 & 1 & 1 & 1 & 2 & 5 
\end{ytableau}}
\qquad \text{and} \qquad
\widetilde{Q} =
\raisebox{2.2cm}{
\begin{ytableau}
3 & 4\\
*(gray) & 3\\
*(gray) & *(gray) & *(gray)\\
*(gray) & *(gray) & *(gray) & 1\\
*(gray) & *(gray) & *(gray) & *(gray) & *(gray) & *(gray) 
\end{ytableau}}\;.
\]
\end{example}

\begin{lemma}
\label{lem:uncrowdintertwine}
For skew shape $\lambda/\mu$, the crystal operators on $\SVT^m(\lambda/\mu)$ intertwine with those on 
$\SSYT^m(\nu/\mu)$, for $\lambda\subseteq \nu$,  under $\uncrowd$.  
\end{lemma}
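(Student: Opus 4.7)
The argument proceeds by induction on the excess $\ext(T)$. The base case $\ext(T)=0$ is trivial: $T$ is already an ordinary semistandard tableau of shape $\lambda/\mu$, so $\uncrowd$ acts as the identity on the $\widetilde{P}$-component and the intertwining is immediate. For the inductive step, it suffices to establish the following single-step claim: if $T^\dagger$ denotes the set-valued skew tableau obtained from $T$ by performing one uncrowding operation (so $\ext(T^\dagger)=\ext(T)-1$), then $f_i(T^\dagger) = (f_i(T))^\dagger$ whenever both sides are nonzero, and $f_i(T^\dagger)=0$ if and only if $f_i(T)=0$ (with the analogous statement for $e_i$). Granted the claim, the lemma follows by applying the inductive hypothesis to $T^\dagger$, whose excess is strictly smaller than that of $T$.

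To prove the single-step claim, let $r$ be the topmost row of $T$ that contains a multicell and let $x$ be the largest letter of row $r$ appearing in a multicell. The uncrowding operation deletes this $x$ from its cell in row $r$ and performs classical RSK row insertion of $x$ into the rows strictly above row $r$; by the choice of $r$, all of these higher rows are ordinary semistandard rows, so the insertion is genuine RSK. The analysis reduces to tracking how this deletion and RSK insertion interact with the column signature rule of Definition \ref{def: sksvt} for the pair $(i,i+1)$, which is determined entirely by the columns of $T$ containing an $i$ but not an $i+1$ or vice versa.

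The case analysis splits according to whether $x\in\{i,i+1\}$. When $x\notin\{i,i+1\}$, we argue that the RSK bumping either never enters a column containing only $i$ or only $i+1$, or enters such columns in a way that is symmetric between $T$ and $f_i(T)$, so that the set of unpaired columns is preserved and $f_i$ acts on the ``same'' cell before and after uncrowding. When $x\in\{i,i+1\}$, we invoke the classical fact that RSK row insertion on ordinary semistandard tableaux intertwines with the $A_{m-1}$ crystal operators; this takes care of the change above row $r$. The residual modification in row $r$ itself (removal of $x$ from its multicell) changes the multiset attached to a single cell, and one verifies by direct inspection of the signature in that column that this change is either invisible to the $(i,i+1)$-signature (if both $i$ and $i+1$ remain, or if neither lies in the affected cell) or moves the unpaired sign in a manner that matches exactly how $f_i$ acts on $T$.

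\emph{Main obstacle.} The delicate case is when $f_i$ acts in its ``moving'' subcase, namely when the rightmost column with an unpaired $-$ contains an $i$ whose right neighbour $b^\rightarrow$ contains both $i$ and $i+1$, so that $f_i$ removes the $i$ from $b^\rightarrow$ and adds an $i+1$ to $b$. Here the cell $b^\rightarrow$ is itself a multicell which may coincide with, lie in the same row as, or get disturbed by the uncrowded letter $x$. Ensuring that the commutation $f_i\circ(-)^\dagger = (-)^\dagger\circ f_i$ still holds in this subcase requires carefully tracking the bumping path of $x$ through the cell $b^\rightarrow$ (or cells in the same row), and checking that in every subconfiguration the resulting tableau is insensitive to the order in which the uncrowding and the moving $f_i$ are performed. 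This bookkeeping is the technical heart of the proof; once it is settled, the remaining cases reduce cleanly to the symmetry observations above and to the classical intertwining of RSK with crystal operators.
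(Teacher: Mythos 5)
Your proposal has a genuine gap: the single-step claim $f_i(T^\dagger)=(f_i(T))^\dagger$ is precisely the hard content of the lemma, and you do not prove it --- you outline a case split and then explicitly defer the ``moving'' subcase as ``the technical heart,'' so the argument is a plan rather than a proof. Moreover, the case analysis as sketched understates the difficulty in two places. First, the uncrowding operation applied to $f_i(T)$ need not select the same letter $x$, nor even act in the same row, as the one applied to $T$: the non-moving $f_i$ turns an $i$ inside a multicell into an $i+1$, so the largest letter lying in a multicell of the topmost such row can change from $i$ to $i+1$; the moving $f_i$ creates a new multicell at $b$ and may destroy the one at $b^\rightarrow$, which can change which row is topmost with a multicell. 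Hence the two sides of your claimed identity are not ``the same operation applied to two tableaux,'' and the comparison must be made between two genuinely different bumping processes. Second, in the case $x\notin\{i,i+1\}$ you assert that the set of unpaired columns is preserved, but RSK insertion of $x$ into the rows above $r$ can bump letters equal to $i$ or $i+1$ into different columns, altering the $(i,i+1)$-column signature; this case is not automatically ``symmetric'' and needs the same careful tracking as the others.

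For comparison, the paper does not attempt this computation at all: it cites Monical--Pechenik--Scrimshaw (Theorem 3.12) for the straight-shape intertwining and Chan--Pflueger for the fact that uncrowding is well defined on skew shapes, and observes that the MPS argument is insensitive to whether the shape is straight or skew. If you want a self-contained proof along your lines, you would essentially be reproving the MPS theorem, and the bookkeeping you have postponed (including the mismatch in which letter gets uncrowded after applying $f_i$) is exactly where that proof lives; as written, the proposal cannot be accepted as complete.
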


\begin{proof}
Chan and Pflueger~\cite{ChanPflueger.2019} proved that the image of $T\in\SVT(\lambda/\mu)$ under the uncrowding map is 
a pair $(P,Q)$, where $P$ is a semistandard tableau of shape $\nu/\mu$ and $Q$ is a flagged increasing tableau of shape 
$\nu/\lambda$. Monical, Pechenik and Scrimshaw in~\cite[Theorem 3.12]{MPS.2018} proved that the crystal operators on 
$\SVT^m(\lambda)$ intertwine with those on $\SSYT^m(\nu)$ under $\uncrowd$.  
Since $\uncrowd$ is defined equally on skew shapes, the result follows.
\end{proof}

\subsection{Compatibility of $\star$-insertion with uncrowding}

For a partition $\mu$, let $T_\mu$ be the unique tableau of shape $\mu$ with $\mu_i$ letters $i$ in each row $i$.
Note that $\uncrowd(T_\mu)=(T_\mu,\emptyset)$ since $\ext(T_\mu)=0$.

\begin{lemma}
\label{lem:sss}
For $T\in\SVT^m(\lambda/\mu)$, if $(P,Q)=\star(\mathbf{h}\mathbf{h}')$ where $\mathbf{h}=\res(T)$ and 
$\mathbf{h}'=\res(T_\mu)$, then $T_\mu$ is contained in $Q$.
\end{lemma}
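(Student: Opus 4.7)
The plan is to split the computation of $\star(\mathbf{h}\mathbf{h}')$ into two phases using the right-to-left column processing prescribed by Definition~\ref{def: new_insertion}. In the concatenated biword, the columns of $\mathbf{h}'$ sit to the right of those of $\mathbf{h}$, so the insertion first processes exactly the columns of $\mathbf{h}' = \res(T_\mu)$ before touching any column of $\mathbf{h}$. Hence the intermediate state at the end of this first phase is precisely $\star(\mathbf{h}')$.

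Next, I would invoke Proposition~\ref{prop: res.star-insertion}. Since $T_\mu$ is a (single-valued) semistandard Young tableau of shape $\mu$, that proposition applies directly and gives $\star(\mathbf{h}') = (P', T_\mu)$ for some insertion tableau $P'$. By Lemma~\ref{lem: star.insertion}\eqref{lem: P^t.semistd.P.Q.same.shape}, $P'$ also has shape $\mu$. Thus at the end of the first phase the recording tableau is exactly $T_\mu$, sitting on the shape $\mu$.

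For the second phase, the remaining columns, coming from $\mathbf{h}$, are inserted one at a time. Each such insertion in Definition~\ref{def: new_insertion} ultimately terminates via Case~1, which appends a single new cell to the end of some row of $P$ and simultaneously records the column's label in the corresponding new cell of $Q$. This operation only grows $Q$; cells already present are never overwritten, and the new cells are always strictly outside the current shape, hence outside $\mu$. Therefore the entries of the final $Q$ lying inside $\mu$ are exactly those produced in the first phase, namely the entries of $T_\mu$. This is precisely the statement that $T_\mu$ is contained in $Q$.

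The only point requiring care is the clean separation into the two phases, which is immediate because $\star$-insertion is strictly sequential in the columns and never refers back to, nor ahead of, a column other than the one currently being processed; once this is granted, the result follows from a direct application of Proposition~\ref{prop: res.star-insertion} together with the monotone growth of the recording tableau under the remaining insertions.
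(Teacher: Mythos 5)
Your proof is correct and follows essentially the same route as the paper: both apply Proposition~\ref{prop: res.star-insertion} to $T_\mu$ to see that the first (rightmost) phase of the insertion produces recording tableau $T_\mu$ on shape $\mu$, and then use the fact that the subsequent insertions only append new cells outside the current shape. Your write-up merely makes explicit the monotone-growth step that the paper leaves implicit in "the claim follows."
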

\begin{proof}
For $T\in\SVT^m(\lambda/\mu)$, let $T*$ be the set-valued tableau of shape $\lambda$ obtained from $T$ 
by adding $\ell(\mu)$ to each entry and filling in the cells of $\mu$ with $T_\mu$.
By Proposition~\ref{prop: res.star-insertion}, we have
\begin{equation}
	\star \circ\res (T_\mu) = (P_{\mu},T_\mu)\,,
\end{equation}
where $P_\mu$ is the semistandard tableau specified in the proof of Proposition~\ref{prop: res.star-insertion}.
The claim follows by noting that $\res(T*)=\res(T)\res(T_\mu)$. 
\end{proof}

\begin{definition}
A modification of $\star$-insertion is defined on $\HH^{*,m}$ as follows:
for $\mathbf{h}\in\HH^{*,m}$, let $\lambda/\mu$ be the shape of $\res^{-1}(\mathbf{h})$ (which is well-defined up to
a shift by Proposition~\ref{prop.res.image}).
For $\mathbf{h}'=\res(T_\mu)$, let $(P*,Q*)=\star(\mathbf{hh}')$.  Define $\tilde\star(\mathbf{h})=(P,Q)$ where
$P$ is obtained from $P*$ by deleting all entries in cells of $\mu$ and $Q$ is defined
from $Q*$ by deleting $T_\mu$ from it and decreasing all other letters by $\ell(\mu)$.
\end{definition}

Note that this is well-defined by Lemma~\ref{lem:sss} and the fact that
each $\mathbf{h}\in\HH^{*,m}$ can be associated to a skew shape $\lambda/\mu$ 
which is the shape of $\res^{-1}(\mathbf{h})$ by Proposition~\ref{prop.res.image}.
Also note that $\tilde\star(\mathbf{h})=\star(\mathbf{h})$ if $\mu=\emptyset$.

\begin{theorem}
\label{theorem.uncrowding}
Let $T\in\SVT^m(\lambda/\mu)$, $(\tilde{P},\tilde{Q})=\uncrowd(T)$, and $(P,Q)= \tilde \star \circ \res(T)$. Then $Q = \tilde{P}$.
\end{theorem}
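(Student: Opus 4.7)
My plan is to establish the equality $Q = \tilde{P}$ via a crystal-theoretic argument, by showing that both $T \mapsto \tilde{P}$ and $T \mapsto Q$ are morphisms of $A_{m-1}$-crystals into the same target and then verifying agreement on a distinguished representative in each connected component.

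For the map $T \mapsto \tilde{P}$, the crystal morphism property is exactly Lemma~\ref{lem:uncrowdintertwine}. For $T \mapsto Q$, I factor $\tilde{\star} \circ \res$ through the larger crystal $\HH^{m+\ell(\mu),\star}$. The residue map is a crystal isomorphism from $\SVT^m(\lambda/\mu)$ onto $\HH^{m,\star}$ by Theorem~\ref{thm: star}, and the assignment $\mathbf{h}\mapsto \mathbf{h}\mathbf{h}'$ (with $\mathbf{h}'=\res(T_\mu)$ fixed) embeds this image as a subset of $\HH^{m+\ell(\mu),\star}$ on which the crystal operators corresponding to $f_i^\star$ for $1\leqslant i<m$, after an index shift by $\ell(\mu)$, act only on the top two factors and leave $\mathbf{h}'$ unchanged. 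By Theorem~\ref{thm: star-crystal.star-insertion} applied in $\HH^{m+\ell(\mu),\star}$, the recording tableau $Q*$ intertwines with these operators. By Lemma~\ref{lem:sss}, the copy of $T_\mu$ sitting in the bottom-left of $Q*$ is preserved under this action, so the deletion of $T_\mu$ and the shift of letter values by $-\ell(\mu)$ commute with the crystal operators, showing that $T \mapsto Q$ is a crystal morphism valued in $\SSYT^m$.

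Since both crystal morphisms have the same Stembridge target, two such morphisms that agree on one element of each connected component must agree throughout that component. A natural choice of distinguished representative is a purely semistandard tableau $T_0 \in \SSYT^m(\lambda/\mu) \subseteq \SVT^m(\lambda/\mu)$, when one exists. For such $T_0$ the uncrowding operation terminates immediately, so $\tilde{P} = T_0$. On the other hand, setting $T_0^*$ to be the straight-shape semistandard tableau obtained by filling $\mu$ with $T_\mu$ and shifting entries of $T_0$ up by $\ell(\mu)$, Proposition~\ref{prop: res.star-insertion} gives $\star \circ \res(T_0^*) = (P_0^*, T_0^*)$. Since $\res(T_0^*) = \res(T_0)\res(T_\mu)$ as biwords after index shifting, this is precisely the insertion that $\tilde{\star}\circ \res(T_0)$ computes, so after deleting $T_\mu$ and shifting letters back we obtain $Q = T_0 = \tilde{P}$.

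The main obstacle is that not every connected component of $\SVT^m(\lambda/\mu)$ contains a purely semistandard element; this fails precisely when the uncrowded shape $\nu$ is strictly larger than $\lambda$, i.e., when every element in the component has positive excess. For these components I would proceed by induction on $\ext(T)$, where the base case $\ext(T)=0$ is the semistandard case above, and the inductive step compares how adding a single excess letter to some tableau $T'$ (to obtain $T$) triggers one additional uncrowding operation matched by one additional insertion step of $\star$-insertion applied to $\res(T)\res(T_\mu)$; one then verifies that the newly added cells coincide by tracking the bumping paths using Lemma~\ref{lem: row.bumping} and the row/column admissibility properties encoded in Corollary~\ref{cor:horizontal.vertical.strips}. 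Alternatively, one can describe the highest-weight representative of each component explicitly (as a Yamanouchi set-valued skew tableau) and verify the equality at that representative using an $e$-analog of Proposition~\ref{prop: lowest-weight.star-insertion}.
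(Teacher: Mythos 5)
Your overall architecture --- reduce to showing that $T\mapsto\tilde P$ and $T\mapsto Q$ are both crystal morphisms into $\SSYT^m$ and then check agreement at one representative per connected component --- matches the paper's strategy, and your treatment of the zero-excess components and of the skew-to-straight reduction via $\mathbf{h}\mathbf{h}'$ and Lemma~\ref{lem:sss} is sound. However, there is a genuine gap exactly where you flag ``the main obstacle'': the components in which every element has positive excess. For these you only sketch two possible plans (induction on $\ext(T)$ with bumping-path tracking, or an explicit Yamanouchi highest-weight representative with an $e$-analog of Proposition~\ref{prop: lowest-weight.star-insertion}), and neither is carried out. The inductive step you describe --- that adding one excess letter triggers one uncrowding operation ``matched by one additional insertion step'' whose new cell coincides --- is not a routine verification: a single uncrowding operation performs an RSK row bumping upward through the set-valued tableau, while the $\star$-insertion of $\res(T)$ processes letters in a completely different order (by value, then by content), so there is no step-by-step correspondence to track. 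As written, the proof is incomplete precisely on the components that carry all the content of the theorem.

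The paper closes this gap without any bumping-path analysis, by choosing the \emph{lowest weight} element $T*$ of each component as the representative. The key observations are: (i) for a lowest weight $\mathbf{h}$ of weight $\mathbf{a}$, the recording tableau $Q$ is forced --- it is the unique lowest weight element of $\SSYT^m$ of shape $\nu=\sort(\mathbf{a})$ (Remark~\ref{remark.shape} together with Theorem~\ref{thm: star-crystal.star-insertion}); (ii) every intermediate tableau $\tilde P_0=T*,\tilde P_1,\dots,\tilde P_\ell=\tilde P$ in the uncrowding process is again lowest weight of the \emph{same} weight $\mathbf{a}$, so all the recording tableaux $Q_i$ of $\star\circ\res(\tilde P_i)$ coincide with this unique tableau; and (iii) the final $\tilde P_\ell=\tilde P$ has no multicells, so Proposition~\ref{prop: res.star-insertion} gives $Q_\ell=\tilde P$ directly, whence $Q=Q_0=Q_\ell=\tilde P$. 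If you want to complete your proof along your own lines, replacing your semistandard/Yamanouchi representative by the lowest weight element and using the uniqueness of the lowest weight recording tableau is the missing idea.
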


\begin{proof}
We start by addressing the straight-shape case; for $T*\in\SVT^m(\lambda)$,
consider the following compositions of maps:
\[
\begin{tikzcd}
	{(\tilde{P},\tilde{Q})} \arrow[d,"f_k"] & T* \arrow[r, "\res"] \arrow[d,"f_k"] \arrow[l, "\uncrowd"] 
	& \mathbf{h} \arrow[r, "\star"] \arrow[d,"f_k^\star"] & {(P,Q)} \arrow[d,"f_k"] \\
	{(f_k(\tilde{P}),\tilde{Q})}      & f_k(T*) \arrow[l, "\uncrowd"] \arrow[r, "\res"]      
	& f^{\star}_k(\mathbf{h}) \arrow[r, "\star"] & {(P,f_k(Q))}.   
\end{tikzcd}
\]
By Lemma~\ref{lem:uncrowdintertwine}, the left square commutes.
By Theorem \ref{thm: star} the center square 
commutes. By Proposition~\ref{proposition.P tableaux} and Theorem~\ref{thm: star-crystal.star-insertion} the right
square commutes. Hence it suffices to prove that $Q=\tilde{P}$ when $T*$ is a lowest weight element in the crystal.

Suppose $T*\in\SVT^m(\lambda)$ is of lowest weight with $\wt(T*)=\mathbf{a}$ and $\ext(T*)=\ell$. Then the decreasing 
factorization $\mathbf{h} \in \mathcal{H}^{m,\star}$ is lowest weight by Theorem \ref{thm: star}. 
By Remark~\ref{remark.shape}, $P$ and hence $Q$ has to be of shape $\nu = \sort(\mathbf{a})$.
By Theorem~\ref{thm: star-crystal.star-insertion}, $Q$ is the unique lowest weight element in $\SSYT^m$ of
shape $\nu$.
	
Consider the uncrowding operator on $T*$ and record each tableau during the process of uncrowding as in 
Definition~\ref{def: uncrowd} by a sequence of set-valued tableaux $T*=\tilde{P}_0 \rightarrow \tilde{P}_1 \rightarrow 
\dots \rightarrow \tilde{P}_{\ell} =\tilde{P}$. Since $T*$ is of lowest weight, so are all the $\tilde{P}_i$. Furthermore, all
$\tilde{P}_i$ have the same weight $\mathbf{a}$. Let $(P_i,Q_i)=\star \circ \res(\tilde{P}_i)$. For all 
$0\leqslant i\leqslant \ell$, $Q_i$ is the unique lowest weight element in $\SSYT^m$ of shape $\nu$. Hence in 
particular $Q_i=Q$ for all $0\leqslant i\leqslant \ell$. By Proposition~\ref{prop: res.star-insertion}, $Q=Q_\ell=\tilde{P}$,
proving the claim for straight shapes.

Now take $T\in\SVT^m(\lambda/\mu)$ and construct $T*$ from $T$ by adding $\ell(\mu)$ 
to each entry and filling in the cells of $\mu$ with $T_\mu$.  Note that $T*$ is a set-valued tableaux of shape $\lambda$.
Let $(P,Q)=\star\circ\res(T*)$ and $(P*,Q*)=\tilde\star\circ\res(T)$.
Since $\res(T*) =\res(T)\res(T_\mu)$, Lemma~\ref{lem:sss} implies that $Q*=Q/T_\mu$.
On the other hand, since $T*$ has straight shape, the preceding paragraph gives that $\uncrowd(T*)=(Q,\tilde Q)$ for 
some $\tilde Q$.
We then note that $\uncrowd(T)$ and $\uncrowd(T*)$ are identical on cells of $\lambda/\mu$ up to a shift of the entries
by $\ell(\mu)$; in particular, applying $\uncrowd$ to $T*$ does not involve any cell of $\mu$ since none of these are 
multicells and their entries are the smallest $\ell(\mu)$ letters.
\end{proof}

\section{Results on the non-fully-commutative case}
\label{section.outlook}

In this section, we discuss some aspects when we generalize to the non-fully-commutative case. In 
Section~\ref{section.n=3}, we describe a local crystal on $\mathcal{H}^m(3)$.
In Section~\ref{section.nonlocal}, we show that under very mild assumptions it is not possible to expect
a local crystal for $n>3$.

\subsection{The case $n=3$}
\label{section.n=3}

We provide a description of a type $A_{m-1}$ crystal structure on $\mathcal{H}^m(3)$.

\begin{definition}\label{def: pairing.process}
Let $\mathbf{h}=h^m h^{m-1}\dots h^2 h^1\in\mathcal{H}^m(3)$.
Fix $1\leqslant k<m$. Define the \defn{pairing process} of $\mathbf{h}$ and the number of pairs in
$h^{k-1}\dots h^{j+1} h^j$, denoted $p([j,k-1])$, recursively as follows:
\begin{enumerate}
\item The empty factorization, denoted $\emptyset$, has no pairs and $p(\emptyset)=0$.
\item If $p([1,j-1])$ is defined for all $1\leqslant j\leqslant k$,
then we have $p([j,k-1])=p([1,k-1])-p([1,j-1])$. 
\item If $h^k=(\;)$, then set $p([1,k])=p([1,k-1])$.
\item Otherwise, if $h^k=(21)$, pair the 2 with the 1 in $h^k$ and set $p([1,k])=p([1,k-1])+1$.
\item Otherwise, if $h^k=(2)$ and $p([1,k-1])$ is even, ignoring all previously paired letters, locate the leftmost 
unpaired letter in $h^{k-1}\dots h^2 h^1$.
	\begin{enumerate}
	\item If this letter is in $h^j=(1)$ and $p([j+1,k-1])$ is even, then pair the 2 in $h^k$ with the 1 in $h^j$ 
	and set $p([1,k])=p([1,k-1])+1$.
	\item If this letter is in $h^j=(2)$ and $p([j+1,k-1])$ is odd, then pair the 2 in $h^k$ with the 2 in $h^j$ 
	and set $p([1,k])=p([1,k-1])+1$.
	\item Else, set $p([1,k])=p([1,k-1])$.
	\end{enumerate}
\item Otherwise, if $h^k=(1)$ and $p([1,k-1])$ 
is odd, ignoring all previously paired letters, locate the leftmost unpaired letter in $h^{k-1}\dots h^2 h^1$.
	\begin{enumerate}
	\item If this letter is in $h^j=(2)$ and $p([j+1,k-1])$ is even, then pair the 1 in $h^k$ with the 2 in $h^j$ 
	and set $p([1,k])=p([1,k-1])+1$.
	\item If this letter is in $h^j=(1)$ and $p([j+1,k-1])$ is odd, then pair the 1 in $h^k$ with the 1 in $h^j$ 
	and set $p([1,k])=p([1,k-1])+1$.
	\item Else, set $p([1,k])=p([1,k-1])$.
	\end{enumerate}
\item Else, set $p([1,k])=p([1,k-1])$.
\end{enumerate}
\end{definition}

\begin{example}
Let $m=8$ and consider $\mathbf{h}=(\;)(2)(\;)(21)(1)(1)(2)(21)\in \mathcal{H}^8(3)$. The pairing process results 
in $(\;)(2)(\;)(\underbracket{21})(1)(\underbracket{1)(2})(\underbracket{21})$, where the paired letters are indicated 
with braces. Hence, we have the following values of $p([1,k])$ for $1\leqslant k\leqslant 8$: 0, 1, 1, 2, 2, 3, 3, 3. 
Note that the letters in the fourth and seventh factors are left unpaired.

Similarly, if we take $\mathbf{h}=(\;)(2)(2)(21)(2)(1)(21)(21)\in \mathcal{H}^8(3)$, we obtain
$(\;)(\underbracket{2)(\overbracket{2)(\underbracket{21})(2})(1})(\underbracket{21})(\underbracket{21})$. 
Thus, we have the following values of $p([1,k])$ for $1\leqslant k\leqslant 8$: 0, 1, 2, 2, 2, 3 ,4, 5. In this case
all the letters in $\mathbf{h}$ are paired.
\end{example}

\begin{definition}
Let $\mathbf{h}=h^m\dots h^2 h^1 \in \mathcal{H}^m(3)$. The crystal operator $f_i$ for $1\leqslant i<m$
on $\mathbf{h}$ is defined as follows. The operator $f_i$ only depends on $h^{i+1}h^i$ and the parity of $p([1,i-1])$ 
of Definition~\ref{def: pairing.process}. In the following cases, we indicate only the changes in $h^{i+1}h^i$ 
under $f_i$ as the remainder of $\mathbf{h}$ remains invariant:
\begin{enumerate}
\item $(21)(x)  \xrightarrow{i} 0$, where $(x)\in\{(\;),(1),(2),(21)\}$,
\item $(x)(\;)  \xrightarrow{i} 0$, where $(x)\in\{(\;),(1),(2),(21)\}$,
\item $(x)(x)  \xrightarrow{i} 0$, where $(x)\in\{(\;),(1),(2)\}$,
\item $(1)(21) \xrightarrow{i} (21)(2)$,
\item $(2)(21)  \xrightarrow{i} (21)(1)$,
\item $(\;)(x) \xrightarrow{i} (x)(\;)$, where $(x)\in\{(1),(2)\}$,
\item $(\;)(21) \xrightarrow{i} (2)(1) \xrightarrow{i}  (21)(\;)$, if $p([1,i-1])$ is even,
\item $(\;)(21) \xrightarrow{i} (1)(2) \xrightarrow{i}  (21)(\;)$, if $p([1,i-1])$ is odd.
\end{enumerate}
The operator $e_i$ is defined similarly. One reverses the changes introduced in cases (4) to (8) 
and annihilates $\mathbf{h}$ when the following occurs at $h^{i+1}h^i$:
\begin{enumerate}
\item[(1)'] $(x)(21)  \xrightarrow{i} 0$, where $(x)\in\{(\;),(1),(2),(21)\}$,
\item[(2)'] $(\;)(x)  \xrightarrow{i} 0$, where $(x)\in\{(\;),(1),(2),(21)\}$,
\item[(3)'] $(x)(x)  \xrightarrow{i} 0$, where $(x)\in\{(\;),(1),(2)\}$.
\end{enumerate}

\begin{figure}\begin{tikzpicture}[>=latex,line join=bevel,xscale=0.7,yscale=0.7,every node/.style={scale=0.7}]
\node (node_9) at (265.0bp,221.5bp) [draw,draw=none] {$\left(2\right)\left(21\right)\left(2\right)$};
  \node (node_8) at (109.0bp,150.5bp) [draw,draw=none] {$\left(21\right)\left(\;\right)\left(21\right)$};
  \node (node_7) at (265.0bp,150.5bp) [draw,draw=none] {$\left(21\right)\left(1\right)\left(2\right)$};
  \node (node_6) at (265.0bp,292.5bp) [draw,draw=none] {$\left(2\right)\left(1\right)\left(21\right)$};
  \node (node_5) at (69.0bp,8.5bp) [draw,draw=none] {$\left(2, 1\right)\left(21\right)\left(\;\right)$};
  \node (node_4) at (189.0bp,292.5bp) [draw,draw=none] {$\left(1\right)\left(1\right)\left(21\right)$};
  \node (node_3) at (59.0bp,221.5bp) [draw,draw=none] {$\left(1\right)\left(2\right)\left(21\right)$};
  \node (node_2) at (189.0bp,221.5bp) [draw,draw=none] {$\left(1\right)\left(21\right)\left(2\right)$};
  \node (node_1) at (189.0bp,150.5bp) [draw,draw=none] {$\left(21\right)\left(2\right)\left(2\right)$};
  \node (node_10) at (69.0bp,79.5bp) [draw,draw=none] {$\left(21\right)\left(2\right)\left(1\right)$};
  \node (node_11) at (59.0bp,292.5bp) [draw,draw=none] {$\left(\;\right)\left(21\right)\left(21\right)$};
  \node (node_0) at (29.0bp,150.5bp) [draw,draw=none] {$\left(1\right)\left(21\right)\left(1\right)$};
  \draw [blue,->] (node_3) ..controls (50.688bp,201.83bp) and (42.61bp,182.71bp)  .. (node_0);
  \definecolor{strokecol}{rgb}{0.0,0.0,0.0};
  \pgfsetstrokecolor{strokecol}
  \draw (56.5bp,186.0bp) node {$1$};
  \draw [blue,->] (node_8) ..controls (97.858bp,130.72bp) and (86.937bp,111.34bp)  .. (node_10);
  \draw (102.5bp,115.0bp) node {$1$};
  \draw [red,->] (node_2) ..controls (189.0bp,202.04bp) and (189.0bp,183.45bp)  .. (node_1);
  \draw (197.5bp,186.0bp) node {$2$};
  \draw [red,->] (node_3) ..controls (73.003bp,201.62bp) and (86.842bp,181.96bp)  .. (node_8);
  \draw (98.5bp,186.0bp) node {$2$};
  \draw [red,->] (node_9) ..controls (265.0bp,202.04bp) and (265.0bp,183.45bp)  .. (node_7);
  \draw (273.5bp,186.0bp) node {$2$};
  \draw [blue,->] (node_10) ..controls (69.0bp,60.042bp) and (69.0bp,41.449bp)  .. (node_5);
  \draw (77.5bp,44.0bp) node {$1$};
  \draw [red,->] (node_11) ..controls (59.0bp,273.04bp) and (59.0bp,254.45bp)  .. (node_3);
  \draw (67.5bp,257.0bp) node {$2$};
  \draw [blue,->] (node_6) ..controls (265.0bp,273.04bp) and (265.0bp,254.45bp)  .. (node_9);
  \draw (273.5bp,257.0bp) node {$1$};
  \draw [red,->] (node_0) ..controls (40.142bp,130.72bp) and (51.063bp,111.34bp)  .. (node_10);
  \draw (62.5bp,115.0bp) node {$2$};
  \draw [blue,->] (node_4) ..controls (189.0bp,273.04bp) and (189.0bp,254.45bp)  .. (node_2);
  \draw (197.5bp,257.0bp) node {$1$};
\end{tikzpicture}

\caption{The crystal graph for $\mathcal{H}^3(3)$ restricted to decreasing factorizations with four letters.}
\label{fig: 121.crystal}
\end{figure}
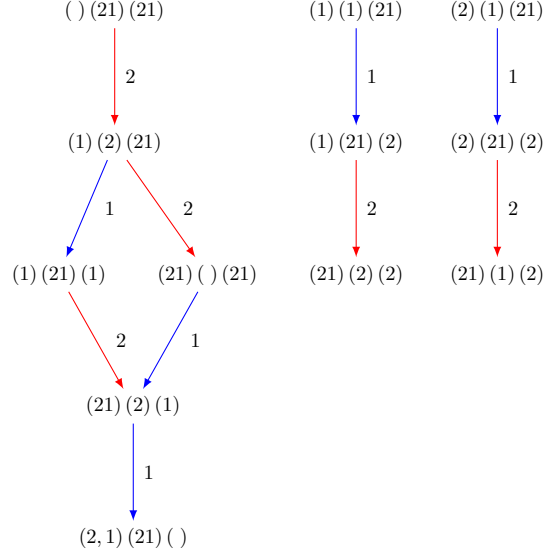

Similar to Definition \ref{def: star}, the weight map is defined as $\wt(\mathbf{h})=(\len(h^1),\len(h^2),\dots,\len(h^m))$.
Meanwhile, $\varphi_i(\mathbf{h})$ (resp. $\varepsilon_i(\mathbf{h})$) is defined to be the largest nonnegative integer
$k$ such that $f_i^{k}(\mathbf{h})\neq 0$ (resp. $e_i^{k}(\mathbf{h})\neq 0$).
\end{definition}

It is not difficult to check that the operators $f_i$ and $e_i$ defined above preserve the relation $\equiv_{\mathcal{H}_0}$ on $\mathcal{H}^m(3)$ whenever they do not annihilate the decreasing factorizations.
Furthermore, the structure above defines an abstract, seminormal 
$A_{m-1}$ crystal on $\mathcal{H}^m(3)$.

We note that one may also verify that the crystal is a Stembridge crystal by checking that the axioms formulated 
in~\cite{Stembridge.2003} are satisfied. Figure~\ref{fig: 121.crystal} displays the crystal graph on $\mathcal{H}^3(3)$
restricted to decreasing factorizations that use exactly 4 letters.

\subsection{Nonlocality}
\label{section.nonlocal}

In this subsection, we show that it is impossible to construct a crystal on $\mathcal{H}^m$ with the following
properties for $f_i$:
\begin{enumerate}
	\item $f_i$ only changes the $i$-th and $(i+1)$-th decreasing factors;
	\item $f_i$ is determined by the first $(i+1)$ factors;
	\item $f_i(\mathbf{h})\equiv_{\mathcal{H}_0}\mathbf{h}$ and $\ext[f_i(\mathbf{h})]=\ext(\mathbf{h})$, for all $\mathbf{h}\in\mathcal{H}^m$ with $f_i(\mathbf{h})\neq 0$.
\end{enumerate}
Let $\mathbf{h}_1=h_1^m\dots h_1^2 h_1^1\in\mathcal{H}^m$ and suppose that $f_i(\mathbf{h}_1)\neq 0$. 
If we write $f_i(\mathbf{h}_1)=h_2^m\dots h_2^2 h_2^1$, then the above assumptions imply that $h_1^{i+1}h_1^i\dots h^1_1\equiv_{\mathcal{H}_0} h_2^{i+1}h_2^i\dots h^1_2$. Obviously the crystal on $\mathcal{H}^m(3)$ defined in Section~\ref{section.n=3}
satisfies these assumptions.

Suppose that a crystal structure with the above assumptions exists on $\mathcal{H}^4(4)$. Consider the Schur expansion
of the stable Grothendieck polynomial in 4 variables for $w=12132$:
\[
	\mathfrak{G}_{12132}(x_1,x_2,x_3,x_4;\beta) = s_{221}+ \beta(2s_{222}+3s_{2211})
	+ \beta^2(6s_{2221}+6s_{22111}) + \cdots.
\]
(Note that $s_{22111}$ is zero in four variables and hence could be omitted).
The linear term in $\beta$ implies that there are two connected components with highest weight $(2,2,2,0)$ 
(lowest weight $(0,2,2,2)$) for the crystal $\mathcal{H}^4(4)$ with excess 1. All decreasing factorizations mentioned 
below are those of $w=12132$ with 4 factors and excess 1.

There are two decreasing factorizations of weight $(2,2,2,0)$:  $(\;)(21)(21)(32)$ and $(\;)(21)(32)(32)$. Focus on the 
connected component with highest weight $(\;)(21)(32)(32)$ and try to complete the crystal graph from top to bottom. Since 
the only decreasing factorization of weight $(2,2,1,1)$ with the first and second factors both being $(32)$ is 
$(2)(1)(32)(32)$, we can compute the action of $f_3$ on this highest weight element. By some similar arguments 
we can fill in part of the crystal graph as indicated in Figure \ref{fig: 12132.crystal.graph} with the above assumptions. The dashed spaces are undetermined.
\begin{figure}
	\begin{tikzpicture}[>=latex,line join=bevel,xscale=1.15,yscale=0.52,every node/.style={scale=1}]
	\node (node_9) at (187.5bp,283.5bp) [draw,draw=none] {$(21)(\;)(32)(32)$};
	\node (node_8) at (187.5bp,145.5bp) [draw,draw=none] {$(21)(\_\;\_)(\;)(32)$};
	\node (node_7) at (55.5bp,283.5bp) [draw,draw=none] {$(2)(\_\;\_)(2)(32)$};
	\node (node_6) at (55.5bp,145.5bp) [draw,draw=none] {$(21)(\_)(32)(3)$};
	\node (node_5) at (55.5bp,214.5bp) [draw,draw=none] {$(2)(\_\;\_)(32)(3)$};
	\node (node_4) at (121.5bp,352.5bp) [draw,draw=none] {$(2)(1)(32)(32)$};
	\node (node_3) at (121.5bp,7.5bp) [draw,draw=none] {$(21)(\_\;\_)(32)(\;)$};
	\node (node_2) at (121.5bp,76.5bp) [draw,draw=none] {$(21)(\_\;\_)(3)(2)$};
	\node (node_1) at (121.5bp,421.5bp) [draw,draw=none] {$(\;)(21)(32)(32)$};
	\node (node_0) at (185.5bp,214.5bp) [draw,draw=none] {$(21)(\_)(\_)(32)$};
	\draw [green,->] (node_1) ..controls (121.5bp,403.44bp) and (121.5bp,384.5bp)  .. (node_4);
	\definecolor{strokecol}{rgb}{0.0,0.0,0.0};
	\pgfsetstrokecolor{strokecol}
	\draw (130.0bp,387.0bp) node {$3$};
	\draw [red,->] (node_6) ..controls (73.364bp,126.82bp) and (93.078bp,106.21bp)  .. (node_2);
	\draw (105.0bp,111.0bp) node {$\textcolor{red}{\mathbf{2}}$};
	\draw [blue,->] (node_2) ..controls (121.5bp,58.442bp) and (121.5bp,39.496bp)  .. (node_3);
	\draw (130.0bp,42.0bp) node {$1$};
	\draw [green,->] (node_5) ..controls (55.5bp,196.44bp) and (55.5bp,177.5bp)  .. (node_6);
	\draw (64.0bp,180.0bp) node {$3$};
	\draw [green,->] (node_7) ..controls (91.931bp,264.16bp) and (135.37bp,241.11bp)  .. (node_0);
	\draw (145.0bp,249.0bp) node {$3$};
	\draw [red,->] (node_4) ..controls (103.64bp,333.82bp) and (83.922bp,313.21bp)  .. (node_7);
	\draw (105.0bp,318.0bp) node {$2$};
	\draw [blue,->] (node_8) ..controls (169.64bp,126.82bp) and (149.92bp,106.21bp)  .. (node_2);
	\draw (171.0bp,111.0bp) node {$1$};
	\draw [blue,->] (node_7) ..controls (55.5bp,265.44bp) and (55.5bp,246.5bp)  .. (node_5);
	\draw (64.0bp,249.0bp) node {$1$};
	\draw [red,->] (node_0) ..controls (186.02bp,196.44bp) and (186.57bp,177.5bp)  .. (node_8);
	\draw (196.0bp,180.0bp) node {$2$};
	\draw [blue,->] (node_0) ..controls (149.07bp,195.16bp) and (105.63bp,172.11bp)  .. (node_6);
	\draw (145.0bp,180.0bp) node {$1$};
	\draw [red,->] (node_9) ..controls (186.98bp,265.44bp) and (186.43bp,246.5bp)  .. (node_0);
	\draw (196.0bp,249.0bp) node {$2$};
	\draw [green,->] (node_4) ..controls (139.36bp,333.82bp) and (159.08bp,313.21bp)  .. (node_9);
	\draw (171.0bp,318.0bp) node {$3$};
	\end{tikzpicture}
	\caption{Partial filling of the connected component of $\mathcal{H}^4(3)$ containing highest weight element $(\;)(21)(32)(32)$.}
	\label{fig: 12132.crystal.graph}
\end{figure}
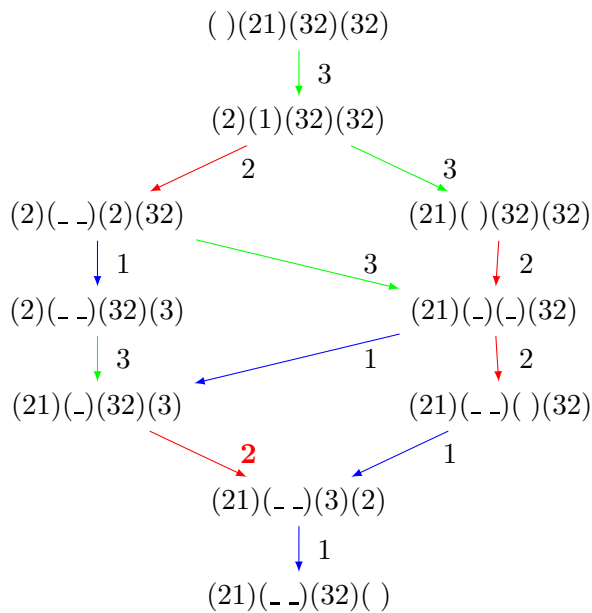

Yet note that the red $f_2$ highlighted in the graph changed the first factor from $(3)$ to $(2)$.
Hence, Condition (1) is violated, providing a counterexample that crystals with the above conditions
always exist on $\mathcal{H}^m(n)$ for $n>3$. 

\bibliographystyle{alpha}
\bibliography{main}{}

\newcommand{\etalchar}[1]{$^{#1}$}
\begin{thebibliography}{BKS{\etalchar{+}}08}

\bibitem[BKS{\etalchar{+}}08]{BKSRY.2008}
Anders~Skovsted Buch, Andrew Kresch, Mark Shimozono, Harry Tamvakis, and
  Alexander Yong.
\newblock Stable {G}rothendieck polynomials and {$K$}-theoretic factor
  sequences.
\newblock {\em Math. Ann.}, 340(2):359--382, 2008.

\bibitem[BM12]{BandlowMorse.2012}
Jason Bandlow and Jennifer Morse.
\newblock Combinatorial expansions in {$K$}-theoretic bases.
\newblock {\em Electron. J. Combin.}, 19(4):Paper 39, 27, 2012.

\bibitem[BS17]{BumpSchilling.2017}
Daniel Bump and Anne Schilling.
\newblock {\em Crystal bases}.
\newblock World Scientific Publishing Co. Pte. Ltd., Hackensack, NJ, 2017.
\newblock Representations and combinatorics.

\bibitem[Buc02]{Buch.2002}
Anders~Skovsted Buch.
\newblock A {L}ittlewood-{R}ichardson rule for the {$K$}-theory of
  {G}rassmannians.
\newblock {\em Acta Math.}, 189(1):37--78, 2002.

\bibitem[CP19]{ChanPflueger.2019}
Melody Chan and Nathan Pflueger.
\newblock Combinatorial relations on skew {S}chur and skew stable
  {G}rothendieck polynomials.
\newblock {\em arXiv preprint arXiv:1909.12833}, 2019.

\bibitem[FG98]{FominGreene.1998}
Sergey Fomin and Curtis Greene.
\newblock Noncommutative {S}chur functions and their applications.
\newblock volume 193, pages 179--200. 1998.
\newblock Selected papers in honor of Adriano Garsia (Taormina, 1994).

\bibitem[FK94]{FominKirillov.1994}
Sergey Fomin and Anatol~N. Kirillov.
\newblock Grothendieck polynomials and the {Y}ang-{B}axter equation.
\newblock In {\em Formal power series and algebraic combinatorics/{S}\'{e}ries
  formelles et combinatoire alg\'{e}brique}, pages 183--189. DIMACS,
  Piscataway, NJ, 1994.

\bibitem[Ful96]{Fulton.1996}
William Fulton.
\newblock {\em Young Tableaux:}.
\newblock London Mathematical Society Student Texts. Cambridge University
  Press, 1996.
\newblock With Applications to Representation Theory and Geometry.

\bibitem[Len00]{Lenart.2000}
Cristian Lenart.
\newblock Combinatorial aspects of the {$K$}-theory of {G}rassmannians.
\newblock {\em Ann. Comb.}, 4(1):67--82, 2000.

\bibitem[Len04]{Lenart.2004}
Cristian Lenart.
\newblock A unified approach to combinatorial formulas for {S}chubert
  polynomials.
\newblock {\em J. Algebraic Combin.}, 20(3):263--299, 2004.

\bibitem[LP07]{LamPylyavskyy.2007}
Thomas Lam and Pavlo Pylyavskyy.
\newblock Combinatorial {H}opf algebras and {$K$}-homology of {G}rassmannians.
\newblock {\em Int. Math. Res. Not. IMRN}, (24):Art. ID rnm125, 48, 2007.

\bibitem[LS82]{LascouxSchutzenberger.1982}
Alain Lascoux and Marcel-Paul Sch\"{u}tzenberger.
\newblock Structure de {H}opf de l'anneau de cohomologie et de l'anneau de
  {G}rothendieck d'une vari\'{e}t\'{e} de drapeaux.
\newblock {\em C. R. Acad. Sci. Paris S\'{e}r. I Math.}, 295(11):629--633,
  1982.

\bibitem[LS83]{LascouxSchutzenberger.1983}
Alain Lascoux and Marcel-Paul Sch\"{u}tzenberger.
\newblock Symmetry and flag manifolds.
\newblock In {\em Invariant theory ({M}ontecatini, 1982)}, volume 996 of {\em
  Lecture Notes in Math.}, pages 118--144. Springer, Berlin, 1983.

\bibitem[MPS18]{MPS.2018}
Cara Monical, Oliver Pechenik, and Travis Scrimshaw.
\newblock Crystal structures for symmetric {G}rothendieck polynomials.
\newblock {\em arXiv preprint arXiv:1807.03294}, 2018.

\bibitem[MS16]{MorseSchilling.2016}
Jennifer Morse and Anne Schilling.
\newblock Crystal approach to affine {S}chubert calculus.
\newblock {\em Int. Math. Res. Not. IMRN}, (8):2239--2294, 2016.

\bibitem[Pat16]{Patrias.2016}
Rebecca Patrias.
\newblock Antipode formulas for some combinatorial {H}opf algebras.
\newblock {\em Electron. J. Combin.}, 23(4):Paper 4, 30, 2016.

\bibitem[PP16]{PP.2016}
Rebecca Patrias and Pavlo Pylyavskyy.
\newblock Combinatorics of {$K$}-theory via a {$K$}-theoretic
  {P}oirier-{R}eutenauer bialgebra.
\newblock {\em Discrete Math.}, 339(3):1095--1115, 2016.

\bibitem[RTY18]{RTY.2018}
Vic Reiner, Bridget~E. Tenner, and Alexander Yong.
\newblock Poset edge densities, nearly reduced words, and barely set-valued
  tableaux.
\newblock {\em J. Combin. Theory, Ser. A}, 158:66--125, 2018.

\bibitem[Sta84]{Stanley.1984}
Richard~P. Stanley.
\newblock On the number of reduced decompositions of elements of {C}oxeter
  groups.
\newblock {\em European J. Combin.}, 5(4):359--372, 1984.

\bibitem[Ste96]{Stembridge.1996}
John~R. Stembridge.
\newblock On the fully commutative elements of {C}oxeter groups.
\newblock {\em J. Algebraic Combin.}, 5(4):353--385, 1996.

\bibitem[Ste03]{Stembridge.2003}
John~R. Stembridge.
\newblock A local characterization of simply-laced crystals.
\newblock {\em Trans. Amer. Math. Soc.}, 355(12):4807--4823, 2003.

\end{thebibliography}

\end{document}